\begin{document}

\onehalfspacing

\newtheorem{theorem}{Theorem}[section]
\newtheorem{lemma}[theorem]{Lemma}
\newtheorem{proposition}[theorem]{Proposition}
\newtheorem{corollary}[theorem]{Corollary}
\newtheorem{prop1}[theorem]{Proposition (Rankin)}
\newtheorem{prop2}[theorem]{Proposition (Rankin and Selberg)}
\newtheorem{prop3}[theorem]{Proposition (Rudnick and Sarnak)}
\newtheorem{conj}[theorem]{Conjecture}

\theoremstyle{definition}
\newtheorem{definition}[theorem]{Definition}
\newtheorem{example}[theorem]{Example}
\newtheorem{xca}[theorem]{Exercise}

\theoremstyle{remark}
\newtheorem{remark}[theorem]{Remark}

\numberwithin{equation}{section}

\newcommand{\abs}[1]{\lvert#1\rvert}
\newcommand{\kk}{\kappa}
\newcommand{\al}{\alpha}

\title[Bounds for Additive Divisor Sums]{Bounds and Conjectures for Additive Divisor Sums}

\author{Nathan Ng}
\address{Department of Mathematics and
  Computer Science \\
  University of Lethbridge \\
  Lethbridge, AB \\
  Canada T1K 3M4}
\email{nathan.ng@uleth.ca}

\dedicatory{In memory of Kevin Henriot.}

\author{Mark Thom}
\email{mark.thom2@uleth.ca}


\date{\today}

\keywords{divisor functions, additive divisor sums}

\begin{abstract}
Additive divisor sums play a prominent role in the theory of the
moments of the Riemann zeta function.  There is a long history of
determining sharp asymptotic formula for the shifted convolution sum
of the ordinary divisor function.  In recent years, it has emerged
that a sharp asymptotic formula for the shifted convolution sum of the
triple divisor function would be useful in evaluating the sixth moment
of the Riemann zeta function.  In this article, we study $D_{k,\ell}(x) = \sum_{n \le x} \tau_k(n) \tau_{\ell}(n+h)$
where $\tau_k$ and $\tau_{\ell}$ are the $k$-th and $\ell$-th divisor functions. 
The main result is a lower bound
of the correct order of magnitude for $D_{k,\ell}(x,h)$, uniform in $h$.  In addition, the
conjectural asymptotic formula for $D_{k,\ell}(x,h)$ is
studied.   Using an argument of Ivi\'{c} \cite{Iv1}, \cite{Iv2} and Conrey-Gonek \cite{CG} the 
leading term in the conjectural asymptotic formula  is simplified.
In addition,  a probabilistic method is presented which gives the same leading term.
Finally, we show that these two methods give the same answer as in a recent probabilistic argument of Terry Tao
\cite{Tao}. 
\end{abstract}

\maketitle

\section{Introduction and main theorem} \label{Intro}

Many important problems in analytic number theory concern sums of the form 
\begin{equation}
   \label{fgshifted}
   \sum_{n \le x} f(n) g(n+h)
\end{equation}
where $h \in \mathbb{N}$ and $f$ and $g$ are arithmetic functions.  For instance, the twin prime conjecture
would follow from an asymptotic evaluation of  \eqref{fgshifted} with $f=g=\Lambda$, the von Mangoldt function. 
If $f=g=\lambda$, the Liouville function, this is a special case of the sum that occurs in Chowla's conjecture. 
In this article, we focus on \eqref{fgshifted} with $f=\tau_k$ and $g=\tau_{\ell}$, the  $k$-th and $\ell$-th divisor functions
where $k,\ell \in \mathbb{N}$.
For $n \in \mathbb{N}$, the $k$-th divisor function is defined by 
\[
\tau_k(n) = \# \{ (n_1, \ldots, n_k) \in \mathbb{N}^k \ | \ n_1 \cdots n_k = n \}. 
\] 
Equivalently, $\tau_k(n)$ is the coefficient of $n^{-s}$ in the Dirichlet series of $\zeta(s)^k$, 
where $\zeta(s)$ is the Riemann zeta function. 
Our main focus is the correlation sum
\begin{equation}
  \label{Dklxh}
D_{k,\ell}(x, h) := \sum_{n \le x} \tau_k(n)\tau_{\ell}(n+h) \text{ with } h \in \mathbb{N}.
\end{equation}
For $k=\ell$, we shall use the abbreviated notation
\begin{equation}
  \label{Dkxh}
D_k(x,h) := D_{k,k}(x, h) := \sum_{n \le x} \tau_k(n)\tau_{k}(n+h) \text{ with } h \in \mathbb{N}.
\end{equation}
This last sum has been extensively studied. 
For $k=1$, this sum is trivial. For $k=2$, there is a rich theory connecting this sum to the spectral theory of automorphic forms.   
However,  for $k>2$, this sum is mysterious and there are few results.  Nevertheless, there is the following conjecture: 

\begin{conj} (Additive Divisor Conjecture:  simplified version) \\
Let $\varepsilon >0$ and $k, \ell \ge 2$.  For $1 \le h \le x^{1-\varepsilon}$, we have 
\begin{equation}
   \label{Dklasymptotic}
    D_{k,\ell}(x, h)  \sim \frac{ c_{k,\ell}(h)}{(k-1)! (\ell-1)!} x (\log x)^{k+\ell-2}
\end{equation}
as $x \to \infty$, for a certain real valued constant $c_{k,\ell}(h)$ given by \eqref{cklNgThom} and \eqref{cklTao} below.
\end{conj}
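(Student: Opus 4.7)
The plan is to combine the Dirichlet hyperbola method with the Duke--Friedlander--Iwaniec $\delta$-symbol refinement of the circle method. Writing $\tau_k(n) = \sum_{n_1 \cdots n_k = n} 1$ and splitting dyadically, I would first use the hyperbola principle to reduce to ranges in which all but one of the $n_i$ is bounded by a parameter $Y \ll x^{1/k}$. In each such range the remaining summation variable runs over an arithmetic progression determined by the fixed variables, and Voronoi summation applied to $\tau_\ell(n+h)$ with $n$ restricted to a residue class modulo $n_1 \cdots n_{k-1}$ transforms the sum into a diagonal plus off-diagonal decomposition. The diagonal assembles into the expected main term of size $x(\log x)^{k+\ell-2}$, while the off-diagonal becomes a bilinear form in hyper-Kloosterman sums.

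To identify the constant $c_{k,\ell}(h)$, I would compute the contribution of the diagonal via a residue calculation on the auxiliary multi-variable Dirichlet series built from $\zeta(s)^k \zeta(w)^{\ell}$ and the shifted-convolution Dirichlet series $\sum_n n^{-s}(n+h)^{-w}$. The resulting coefficient ought to factor as a singular series, a product of local densities over primes whose $p$-factor depends only on $v_p(h)$; verifying that this local product agrees with the expressions \eqref{cklNgThom} and \eqref{cklTao} is then a prime-by-prime algebraic check once the Euler-product structure has been laid bare. This is essentially the Ivi\'c and Conrey--Gonek heuristic, and I would expect it to be made fully rigorous on the main-term side.

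The genuine obstruction is the off-diagonal. For $\min(k,\ell) = 2$ the Kuznetsov trace formula together with Weil's bound suffice, and the conjecture is a theorem due to Ingham, Estermann, Motohashi, and others; for $(k,\ell)=(2,3)$ there are partial results going back to Conrey--Gonek and Bykovskii--Vinogradov. However, for $\min(k,\ell) \ge 3$ the required cancellation in bilinear forms of hyper-Kloosterman sums lies at, or beyond, the boundary of current spectral technology on $GL(k) \times GL(\ell)$: one would need averaged bounds on shifted convolutions of higher-rank Fourier coefficients that are not presently available. Consequently, the realistic scope of any concrete plan for the conjecture as stated is to make the main-term identification rigorous and to reduce the error term to an explicit large-sieve-type hypothesis, rather than to settle it unconditionally; this accounts for why the paper focuses on a lower bound of the correct order of magnitude and on simplifying the conjectural leading constant.
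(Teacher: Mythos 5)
You correctly recognize that the statement is a conjecture rather than a theorem, and that the paper neither proves it nor claims to; what the paper provides is a derivation of the leading constant $c_{k,\ell}(h)$, a lower bound of the right order of magnitude (Theorem \ref{mainthm}), and a reconciliation of several expressions for the singular series. Your diagnosis that the obstruction for $\min(k,\ell)\ge 3$ is insufficient control of bilinear forms in hyper-Kloosterman sums on higher-rank groups is the accepted picture, and your closing observation about the paper's actual scope is accurate.

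Where your sketch departs from the paper is in the identification of the main term. You propose extracting the constant from a residue computation on $\zeta(s)^k\zeta(w)^\ell$ combined with a Dirichlet series of the shape $\sum_n n^{-s}(n+h)^{-w}$. The paper's route (following Ivi\'c and Conrey--Gonek) never introduces such a series: the $\delta$-method detects the additive shift through Ramanujan sums, the main term is $\int_0^x \sum_q c_q(h)q^{-2}P_k(t,q)P_\ell(t+h,q)\,dt$ as in \eqref{mkxhIvic}, and the arithmetic constant surfaces as $\mathcal{D}_{k,\ell}(0,0)$ where $\mathcal{D}_{k,\ell}(s_1,s_2)$ is the Ramanujan-sum--weighted series \eqref{Dk} built from Conrey--Gonek's multiplicative functions $G_k(\cdot,s)$. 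The prime-by-prime match of $\mathcal{D}_{k,\ell}(0,0)$ with $C_{k,\ell}f_{k,\ell}(h)$ is then obtained via Lemma \ref{f1f2prod} together with the inductive verification of \eqref{keyidentity}. Your hyperbola/Voronoi decomposition is a reasonable alternative scaffolding for the diagonal, but as written it leaves unclear how the Estermann-type zeta function you invoke meshes with the hyperbola variables, and it omits the step that actually pins down the local factors. Finally, the paper also gives a second, independent probabilistic derivation of the same constant in section \ref{Probmethod} and shows it agrees with Tao's expression \eqref{cklTao}; this redundancy is a genuine feature of the paper that your proposal does not touch.
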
 
In this article we provide several expressions for $c_{k,\ell}(h)$.    The value for $c_{k,\ell}(h)$ 
can be computed using the work of Ivi\'{c} \cite{Iv1} and of Conrey-Gonek \cite{CG}.  Both of these papers use the $\delta$-method (circle method) to give a formula for $D_{k}(x,h)$. 
In addition, we present a heuristic probabilistic method in section \ref{Probmethod} to give an alternate calculation 
of $c_{k,\ell}(h)$.  These two methods lead to
\begin{equation}
  \label{cklNgThom}
  c_{k,\ell}(h) = C_{k,\ell} f_{k,\ell}(h),
\end{equation}
where 
\begin{equation}
  \label{Ckl}
  C_{k,\ell} :=  \prod_p \Big(  \Big(1-\frac{1}{p} \Big )^{k-1}
  +   \Big(1-\frac{1}{p} \Big )^{\ell-1}
   -
       \Big ( 1-\frac{1}{p} \Big )^{k+\ell-2} \Big ),
\end{equation}
and $f_{k,\ell}(\cdot)$ is a multiplicative function  defined  on prime powers $p^{\alpha}$ by
\begin{equation}
  \label{fklpa}
 f_{k,\ell}(p^{\alpha}) :=\frac{
 1 + \sum_{i=1}^{\alpha} (\tau_k(p^i)\tau_{\ell}(p^i)-\tau_k(p^{i-1})\tau_{\ell}(p^{i-1}))X^i 
    +    \sum_{i=\alpha+1}^{\infty} ( \tau_k(p^{\alpha})  \tau_{\ell-1}(p^i)
   + \tau_{\ell}(p^{\alpha})   \tau_{k-1}(p^i))X^i
 }{  \Big( 1-\frac{1}{p} \Big)^{-(k-1)}+\Big( 1-\frac{1}{p} \Big)^{-(\ell-1)}-1  }.
\end{equation}
We also provide several other expressions for $f_{k,\ell}(p^{\alpha})$ and hence $c_{k,\ell}(h)$ 
(see \eqref{fklpa2}, \eqref{fklpa3}, and \eqref{cklhexpectedvalue} below).
 Another expression for 
$c_{k,\ell}(h)$ has been given by Terry Tao. 
In a blogpost of Aug. 31, 2016, Tao provided a different heuristic probabilistic argument that gives
\begin{equation}
   \label{cklTao}
   c_{k,\ell}(h) = \prod_{p}  \mathfrak{S}_{k,\ell,h}(p)
\end{equation}
where 
\begin{equation}
   \label{Sklhp}
   \mathfrak{S}_{k,\ell,h}(p) = \Big( 1- \frac{1}{p} \Big)^{k+\ell-2} \sum_{j \ge 0: p^j \mid h}  \frac{1}{p^j} 
   P_{k,\ell,p}(j),
\end{equation}
\begin{equation}
  \label{Pklpj}
  P_{k,\ell,p}(j) = \sum_{k'=2}^{k} \sum_{\ell'=2}^{\ell} 
   \binom{k-k'+j-1}{k-k'} \binom{\ell-\ell'+j-1}{\ell-\ell'}
  \Big( \Big( \frac{p}{p-1} \Big)^{k'-1} + \Big( \frac{p}{p-1} \Big)^{\ell'-1}-1
  \Big)
\end{equation}
and the conventions $\binom{-1}{0}=1$ and $\binom{m-1}{m}=0$ for $m \ge 1$ are used here. 
This expression can be further simplified to 
\begin{equation}
\begin{split} 
  \label{Pklpj2}
   & P_{k,\ell,p}(j) = \\
  &   
   \binom{k+j-2}{j}
    \sum_{i=0}^{\ell-2} \binom{i+j-1}{i}   \Big( \frac{p}{p-1} \Big)^{\ell-i-1}
   + 
    \binom{\ell+j-2}{j}
    \sum_{i=0}^{k-2} \binom{i+j-1}{i}   \Big( \frac{p}{p-1} \Big)^{k-i-1}
    - \binom{k+j-2}{j} \binom{\ell+j-2}{j}.
\end{split}
\end{equation}
Although it is not obvious, we shall show in section \ref{Probmethod} that the expresssions for $c_{k,\ell}(h)$ given by \eqref{cklNgThom} and \eqref{cklTao}
are equal. 
It is not clear what is the simplest or most natural form for $c_{k,\ell}(h)$. Currently, 
\eqref{cklTao} with \eqref{Sklhp} and \eqref{Pklpj2} appears to be the simplest known expression for $c_{k,\ell}(h)$. 
%

The above conjecture simplifies conjectures of  Ivi\'{c} \cite{Iv1} and Conrey-Gonek \cite{CG},
though in the above formulation we allow $h$ to be as large as $x^{1-\varepsilon}$ instead of $x^{\frac{1}{2}}$. 
The case $h=1$ reduces to  
\[
  \sum_{n \le x} \tau_{k}(n) \tau_{\ell}(n+1) \sim \frac{C_{k,\ell}}{(k-1)! (\ell-1)!} x (\log x)^{k+\ell-2}. 
\]
The conjectures of \cite{Iv1} and \cite{CG} may be written in the form 
\begin{equation}
  \label{Dkxhasymptotic}
D_{k,\ell}(x,h) = x\Big( \alpha_0(h) (\log x)^{k+\ell-2} +   \sum_{i=1}^{k+\ell-2} \alpha_i(h)  (\log x)^{k+\ell-2-i} \Big) + o(x)
\end{equation}
for certain coefficients $\alpha_i(h)$ where $h$ is allowed to vary with $x$.  Ivi\'{c} \cite{Iv1} gave formulae for the $\alpha_i(h)$
in terms of certain singular series.   On the other hand, Conrey and Gonek gave a formula for the derivative of the above
main term
in terms of a complicated double complex integral.
This will be discussed in further detail in section two where we show that $\alpha_0(h) = C_{k,\ell} f_{k,\ell}(h)/(k-1)!(\ell-1)!$.

The main result in this article is a uniform lower bound for $D_{k,\ell}(x,h)$. 
\begin{theorem} \label{mainthm}
For $k,\ell \ge 3$, there exists $B_{k,\ell}>0$ such that for $h \le \exp(B_{k,\ell} (\log x \log \log x)^{\frac{\min(k,\ell)-1}{\min(k,\ell)-1.99}})$, 
we have 
\[
   \frac{1}{2^{k+\ell-2}} \frac{C_{k,\ell} f_{k,\ell}(h)}{(k-1)! (\ell-1)!}x (\log x)^{k+\ell-2}
   \Big( 1+ O_{k,\ell} \Big( \frac{\log \log h}{\log x}  \Big) \Big)
    \le
   D_{k,\ell}(x,h)
\]
as $x \to \infty$. 
\end{theorem}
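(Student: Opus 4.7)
The plan is a divisor-switching argument combined with the Chinese Remainder Theorem and a multiplicative evaluation of the resulting arithmetic sum. The starting point is the identity $\tau_k(n) = \sum_{D \mid n} \tau_{k-1}(D)$, which yields the trivial lower bound $\tau_k(n) \ge \sum_{D \mid n,\, D \le Y_k} \tau_{k-1}(D)$ for any parameter $Y_k$, and analogously for $\tau_\ell$ with a parameter $Y_\ell$. Substituting into $D_{k,\ell}(x,h)$ and interchanging summations produces
\[
D_{k,\ell}(x,h) \ge \sum_{D \le Y_k} \sum_{E \le Y_\ell} \tau_{k-1}(D) \tau_{\ell-1}(E) \cdot \#\{n \le x : D \mid n,\ E \mid n+h\}.
\]
The Chinese Remainder Theorem evaluates the inner count as $x(D,E)/(DE) + O(1)$ when $(D,E) \mid h$ and as $O(1)$ otherwise, reducing the theorem to asymptotic analysis of a purely arithmetic sum.

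The heart of the proof is to evaluate
\[
S(Y_k, Y_\ell; h) := \sum_{\substack{D \le Y_k,\, E \le Y_\ell \\ (D,E) \mid h}} \frac{\tau_{k-1}(D) \tau_{\ell-1}(E)(D,E)}{DE}.
\]
The associated Dirichlet series $F_h(s,t) = \sum_{(D,E) \mid h} \tau_{k-1}(D)\tau_{\ell-1}(E)(D,E) D^{-s} E^{-t}$ factors as an Euler product $\prod_p F_{h,p}(s,t)$, where each local factor is an explicit rational function of $p^{-s}, p^{-t}$ determined by $v_p(h)$. At $(s,t)=(1,1)$ there are simultaneous poles of orders $k-1$ and $\ell-1$, and a prime-by-prime residue extraction matches the leading coefficient with the singular series $C_{k,\ell} f_{k,\ell}(h)$ of \eqref{Ckl}--\eqref{fklpa}. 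A two-variable Perron-type argument (or equivalently iterated Mellin inversion) then yields
\[
S(Y_k, Y_\ell; h) = \frac{C_{k,\ell} f_{k,\ell}(h)}{(k-1)!(\ell-1)!} (\log Y_k)^{k-1} (\log Y_\ell)^{\ell-1} \Big(1 + O_{k,\ell}\!\Big(\tfrac{\log\log h}{\log x}\Big)\Big) + (\text{lower order}).
\]

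For the final assembly I would choose $Y_k = Y_\ell = x^{1/2}(\log x)^{-C}$ with $C$ large. The cumulative $O(1)$ contribution is then $O(Y_k Y_\ell (\log x)^{k+\ell-4}) = O(x(\log x)^{-\omega})$ for any fixed $\omega$, hence negligible. Since $\log Y_k = (\log x)/2 + O(\log\log x)$, the product $(\log Y_k)^{k-1}(\log Y_\ell)^{\ell-1}$ equals $(\log x)^{k+\ell-2}/2^{k+\ell-2}$ up to the relative error built into the main term, yielding the lower bound in the stated form.

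The principal obstacle is the uniform evaluation of $S(Y_k, Y_\ell; h)$ as $h$ varies with $x$. The local factors $F_{h,p}$ at primes dividing $h$ can be substantially larger than their unramified counterparts, and the error in approximating $S$ by its leading residue depends delicately on the number and size of such primes. Controlling this error via Perron's formula truncated at a well-chosen height, in combination with Rankin-style moment bounds of the form $\sum_{D \le Y} \tau_{k-1}(D)^2 / D \ll (\log Y)^{(k-1)^2}$, is what forces the hypothesis $k,\ell \ge 3$ (needed to guarantee enough convergence in the Euler product and strictly positive savings in the moment estimates) and produces the explicit exponent $(\min(k,\ell)-1)/(\min(k,\ell)-1.99)$ controlling the admissible range of $h$.
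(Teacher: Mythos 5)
Your proposal is correct and follows essentially the same route as the paper: truncate $\tau_k$ and $\tau_\ell$ via $\tau_k(n)\ge\sum_{D\mid n,\,D\le Y}\tau_{k-1}(D)$, count solutions of the congruence conditions by the Chinese Remainder Theorem, and evaluate the resulting gcd-restricted bilinear sum $\sum_{(D,E)\mid h}\tau_{k-1}(D)\tau_{\ell-1}(E)/[D,E]$ by factoring its double Dirichlet series as $\zeta(s_1+1)^{k-1}\zeta(s_2+1)^{\ell-1}$ times an Euler product whose value at the origin is $C_{k,\ell}f_{k,\ell}(h)$, which is exactly the content of Proposition \ref{selberg} and Lemma \ref{f1f2prod}. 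The only differences are cosmetic (the paper smooths the cutoff with Mellin majorants/minorants rather than a two-variable Perron truncation, and works dyadically), and your identification of where the constraint on $h$ and the hypothesis $k,\ell\ge 3$ enter --- uniform control of the ramified Euler factors at primes dividing $h$ against the $X^{-c/\max(k,\ell)}$ saving from the contour shift --- matches the paper's treatment via $\Theta(1-\tfrac{0.99}{m},h)$.
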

Recently, Kevin Henriot  informed us that S. Daniel \cite{Da} showed that 
\begin{equation}
  \label{Danbd}
  D_{k,\ell}(x,h) \ll_k  
  \prod_{p \mid h} \Big(1+ \frac{(k-1)(\ell-1)}{p} \Big) x (\log x)^{k+\ell-2}, \text{ for } h \le x^{C},
\end{equation}
for any $C >0$.  Note that since 
\begin{equation}
  \label{fkpaestimate}
  f_{k,\ell}(p^{\alpha}) = 1 + \frac{(k-1)(\ell-1)}{p} + O_{k,\ell}(p^{-2})
\end{equation}
\eqref{Danbd} implies 
\begin{equation}
  \label{Danbd2}
  D_{k,\ell}(x,h) \ll_{k,\ell}  f_{k,\ell}(h)  
 x (\log x)^{k+\ell-2},  \text{ for } h \le x^{C},
\end{equation}
for any $C >0$.  Unfortunately, this result was never published. 
However, Henriot has shown us a proof \cite{He2}
based on \cite{He} and \cite{HeEr}.   In \cite{He} he establishes bounds for 
\begin{equation}
  \label{nairten}
  \sum_{x < n \le x+y} \tau_{k_1}(|Q_1(n)|) \tau_{k_2}(|Q_2(n)|)  \cdots \tau_{k_J}( |Q_J(n)|)
\end{equation}
where $Q_j$ are polynomials with integer coefficients.  More generally he bounds 
\begin{equation}
    \label{nairten2}
  \sum_{x < n \le x+y} f_{1}(|Q_1(n)|) f_{2}(|Q_2(n)|)  \cdots f_{J}( |Q_J(n)|)
\end{equation}
where the $f_i$ belong to a general class of multiplicative functions. 
Such expressions were originally considered by Nair and Tenenbaum \cite{NT}.
However, their bounds for \eqref{nairten2} were not uniform in the coefficients of the $Q_j$.
This problem was addressed by Daniel \cite{Da} and Henriot \cite{He}.  Recently Klurman \cite{K} has 
obtained some interesting results for \eqref{nairten2} in the case that the images of the multiplicative functions
$f_i$ lie in the unit disc.  
Theorem \ref{mainthm} and \eqref{Danbd2} lead us to propose the following  problem. \\
\noindent {\bf Problem}.  Let $k,\ell \ge 3$. Determine the best explicit constants $c_1=c_1(k,\ell)$ and $c_2=c_2(k,\ell)$ 
such that 
\[
     c_1    \le 
     \frac{D_{k,\ell}(x,h)}{ \frac{c_{k,\ell}(h) }{(k-1)! (\ell-1)!} x (\log x)^{2k-2}} \le c_2,
\]
uniformly for $h \le x^{1-\varepsilon}$, as $x \to \infty$.  \\ 
Theorem 1 yields $c_1 = \frac{1}{2^{k+\ell-2}} -\varepsilon$.   
and \eqref{Danbd2} yields
$c_2 = O_{k,\ell}(1)$. Henriot has suggested that  in the case $k=\ell$ the proof of \eqref{Danbd} demonstrates that $c_2$ is doubly or triply exponential in $k$. 

To finish this section, we give some properties of divisor functions, list our conventions and notation, 
and provide an outline of the article. 

\subsection{Properties of Divisor functions} \label{divisorproperties}

This article makes extensive use of divisor functions and related arithmetic functions.  
Recall that for $k \in \mathbb{N}$, the $k$-th divisor function satisfies 
\begin{equation}
  \label{divseries}
  \sum_{j=0}^{\infty} \tau_{k}(p^j) X^{j} = (1-X)^{-k}
\end{equation}
for $p$ prime and $|X| < 1$.  It follows that for $p$ prime and $j \ge 0$,  
\begin{equation}
  \label{taukpj}
  \tau_{k}(p^j) =  \binom{k+j-1}{j}.
\end{equation}
The divisor functions satisfy the relation
\begin{equation}
  \label{recurrence}
   \tau_{k-1}(p^{j}) =\tau_{k}(p^j)-\tau_{k}(p^{j-1}) \text{ for } p \text{ prime}, k,  j \ge 1. 
\end{equation}
We shall also encounter a multiplicative function $\sigma_{k}(\cdot,s):\mathbb{N} \to \mathbb{C}$,
where $k \in \mathbb{N}$, $s \in \mathbb{C}$.   
For $n \in \mathbb{N}$,  it is defined by 
\begin{equation}
   \label{sigmams}
   \sigma_{k}(n,s)= \Big( \sum_{a=1}^{\infty} \frac{\tau_{k}(na)}{a^s}  \Big) \zeta(s)^{-k}.
\end{equation}
By multiplicativity, it follows that  
\begin{equation}
   \label{sigmapjid}
   \sigma_{k}(p^j,s)  
    = \frac{\sum_{i=0}^{\infty} \frac{\tau_{k}(p^{j+i})}{p^{is}}}{ \sum_{i=0}^{\infty} \frac{\tau_{k}(p^i)}{p^{is}}}
    = (1-p^{-s})^{k} \sum_{i=0}^{\infty} \frac{\tau_{k}(p^{j+i})}{p^{is}}  
\end{equation}
for $j \ge 1$, and in particular,
\begin{equation}
  \label{sigmaps}
   \frac{\sigma_{k}(p,s)}{p^s} = 1- (  1- p^{-s} )^{k}. 
\end{equation}
Moreover, it was proven in \cite{Ng} that 
\begin{equation}
  \label{ngid}
  \sigma_{k}(p^j,s) = \tau_{k}(p^j) H_{k,j}(p^{-s})
\end{equation}
where 
\begin{equation}
    \label{Hkj}
   H_{k,j}(x) 
   :=  j x^{-j} \int_{0}^{x} t^{j-1}(1-t)^{k-1} \, dt   \hspace{0.3cm} \footnote{In \cite{Ng}, we used the notation $H_{j,k}(x)$
instead of $H_{k,j}(x)$.}
\end{equation}
for $j, k \in \mathbb{N}$.  Repeated integration by parts of \eqref{Hkj}
leads to the representation 
\begin{equation}
  \label{Hkj2}
   H_{k,j}(x) :=   
    \sum_{i=0}^{k-1}
            \frac{{k-1 \choose i}}{{j+i \choose j}}
            ( 1-x)^{k-1-i} 
           x^i \text{ where } k \in \mathbb{N}, j \in \mathbb{Z}_{\ge 0}.
\end{equation}
Note that $H_{k,j}(x)$ is a degree $k-1$ polynomial and $H_{k,j}(0)=1$.
Later in the article, we show that 
\begin{equation}
  \label{fklpa2}
   f_{k,\ell}(p^{\alpha}) =  \frac{
    \sum_{j=0}^{\alpha} 
    \Big( \frac{\sigma_{k-1}(p^j,1)\sigma_{\ell-1}(p^j,1) }
            {p^{j}} - \frac{\sigma_{k-1}(p^{j+1},1)\sigma_{\ell-1}(p^{j+1},1) }{p^{j+2}} \Big)}{
       \Big( 1-\frac{1}{p} \Big)^{k-1} + \Big( 1-\frac{1}{p} \Big)^{\ell-1}  - 
       \Big ( 1-\frac{1}{p} \Big )^{k+\ell-2}}.
\end{equation}
By \eqref{Hkj2} we also have 
\begin{equation}
  \label{fklpa3}
   f_{k,\ell}(p^{\alpha}) = 
   \frac{
   \sum_{j=0}^{\alpha} 
    \Big( \frac{\tau_{k-1}(p^j) \tau_{\ell-1}(p^j)   H_{k-1,j}(p^{-1})H_{\ell-1,j}(p^{-1}) }
            {p^{j}} - \frac{\tau_{k-1}(p^{j+1})\tau_{\ell-1}(p^{j+1})  H_{k-1,j+1}(p^{-1})   H_{\ell-1,j+1}(p^{-1})}{p^{j+2}} \Big)}
  { \Big( 1-\frac{1}{p} \Big)^{k-1} + \Big( 1-\frac{1}{p} \Big)^{\ell-1}  - 
       \Big ( 1-\frac{1}{p} \Big )^{k+\ell-2}
       }.
\end{equation}
At several points in this article we make use of these representations. 

\subsection{ Conventions and notation}
In this article we shall use the convention that $\varepsilon$ denotes an arbitrarily small positive constant which may vary from line to line. Given two functions $f(x)$ and $g(x)$, we shall interchangeably use the notation  $f(x)=O(g(x))$, $f(x) \ll g(x)$, and $g(x) \gg f(x)$  to mean there exists $M >0$ such that $|f(x)| \le M |g(x)|$ for  sufficiently large $x$. 
If we write $f(x)=O_{k,\ell}(g(x))$, $f(x) \ll_{k,\ell} g(x)$, or $f(x) \asymp_{k,\ell} g(x)$, then we mean that the 
corresponding constants depend on $k$ and $\ell$.  The letter $p$ will always be used to denote a prime number.
For a complex valued, differentiable function
 $F: \mathbb{C}^2 \to \mathbb{C}$ and $i_1,i_2 \in \mathbb{Z}_{\ge 0}$  we write 
\begin{equation}
  \label{partials}
  F^{(i_1,i_2)}(s_1,s_2) := \frac{\partial^{i_1}}{\partial s_{1}^{i_1}} \frac{\partial^{i_2}}{\partial s_{2}^{i_2}} 
  F(s_1,s_2)
\end{equation}
where $ \frac{\partial^{i}}{\partial s^{i}}$ denotes the $i$-th partial derivative with respect to $s$.  \\
Given $a,b \in \mathbb{Z}$, we let $(a,b)$ denote the greatest common divisor of $a$ and $b$
and $[a,b]$ denotes the least common multiple of $a$ and $b$.

\subsection{Organization of the article}
The article is organized as follows. In section \ref{History} the conjectural asymptotic formula for $D_{k,\ell}(x,h)$ is studied
based on the work of Ivi\'{c} \cite{Iv1} and Conrey-Gonek \cite{CG}.   
We show that the leading term in the asymptotic formula for $D_{k,\ell}(x,h)$ is $\frac{c_{k,\ell}(h)}{(k-1)! (\ell-1)!}x (\log x)^{k+\ell-2}$.   
In section \ref{Lowerbound}, the lower bound in Theorem \ref{mainthm} is proven. 
In section \ref{Probmethod},  a simple probabilistic method is used to rederive the main term of $D_{k,\ell}(x,h)$ which agrees with 
the calculation in section \ref{History}.
In addition, we show that our constant for $c_{k,\ell}(h)$ \eqref{cklNgThom} agrees with Tao's \eqref{cklTao}.
 Finally, we discuss open problems related to additive divisor sums
and avenues for future research.

\section{A brief history of additive divisor sums and a conjectural formula for $D_{k,\ell}(x,h)$} \label{History}

\subsection{A history of additive divisor sums}
Questions concerning sums of the form $D_{k,\ell}(x,h)$ are called additive divisor problems. 
These functions are of interest due to the well-known connection between $D_k(x,h)$ and the $2k$-th moments of the Riemann zeta function, defined 
by 
\[ 
I_k(T) = \int_{0}^{T} |\zeta(\tfrac{1}{2}+it)|^{2k} dt \text{ for } k \ge 0.
\]  
In 1926, Ingham \cite{In} discovered that $D_{2}(x,h)$ is intimately related to the fourth moment, $I_2(T)$. 
He succeeded in proving that 
\[
I_2(T) \sim \frac{T}{2 \pi^2}  (\log T)^4
\]
and an important part of his argument made use of the inequality
\[
  D_{2}(x,h) \ll   \sigma_{-1}(h) x (\log x)^2
\]
for $h \le x$, 
where $\sigma_{-1}(h) =  \sum_{d \mid h} d^{-1}$. 
In \cite{In2} he improved this to 
\begin{equation}
  \label{Ingham}
   D_{2}(x,h) \sim \frac{6}{\pi^2}  \sigma_{-1}(h)  x\log^2{x}. 
\end{equation}
In 1931,    Estermann \cite{Es} proved an estimate of the shape 
\begin{equation}
  \label{Estermann}
D_{2}(x,h) =  x \Big( \frac{6}{\pi^2} \sigma_{-1}(h) \log^2 x + \alpha_1(h) \log x + \alpha_2(h)\Big) + O(x^{\theta+\varepsilon})
\end{equation}
with $\theta= \frac{11}{12}$ and  $\alpha_1(h)$ and $\alpha_2(h)$ are certain arithmetic functions.
Estermann's work relates $D_2(x,h)$ to a formula involving special exponential sums known as Kloosterman sums.
For $q$ a natural number and $u,v$ integers, the Kloosterman sum $S(u,v;q)$ is defined by 
$$
S(u,v;q) := \sum_{\substack{a = 1 \\ (a,q) = 1 \\ a\bar{a} \equiv 1 (\text{mod } q)}}^q e \Big(  \frac{ua + v\bar{a}}{q} \Big).
$$
These sums exhibit considerable cancellation and they arise  in many contexts in analytic number
theory. 
Estermann derived the non-trivial bound  $S(u,v;q) \ll q^{\frac{3}{4} + \varepsilon}(u,q)^{\frac{1}{4}}$ and this led to the error term in \eqref{Estermann}.   A famous result due to Weil is the bound:
$|S(u,v;q)| \le  \tau_2(q) (q,u,v)^{1/2} q^{1/2} \tau(q)$. 
Much later, Heath-Brown \cite{HB} made use of Weil's bound 
to obtain 
\eqref{Estermann} with $\theta=\frac{5}{6}$.  From this he deduced  that
there exists a degree four polynomial $Q_4$  such that 
\begin{equation}
  \label{I2T}
  I_2(T) = T Q_4(\log T) + O(T^{\Theta+\varepsilon}),
\end{equation}
where $\Theta =\frac{5}{6}$ is valid.  
The next advance was due to Deshouillers and Iwaniec \cite{DI}, who proved 
that \eqref{Estermann} is valid with $\theta=\frac{2}{3}$, in the case $h=1$.  
In their work, they related $D_{2}(x;1)$ to averages of Kloosterman sums and then 
made use of Kuznetsov's formula.    This  is a formula which relates sums of Kloosterman sums 
to the coefficients of Maass wave forms and holomorphic modular forms.  Motohashi extended this method and obtained \eqref{Estermann} with $\theta= \frac{2}{3}$, uniformly for  
$h \le x^{\frac{20}{27}}$.  He proved 
\begin{equation}
  \label{Motohashi}
D_{2}(x,h) = \frac{6}{\pi^2}\int_{0}^{\frac{x}{h}} q_2(t,h)\mbox{ }dt  + E_2(x,h)
\end{equation}
where 
\begin{equation}
\begin{split}
   \label{q2xh}
    q_2(t,h) & =
     \sigma(h) \log(t) \log(t+1) + ( \sigma(h)(2 \gamma-\frac{\zeta'}{\zeta}(2) -\log(h)) + 2\sigma^{(1)}(h)) \log(t(t+1)) \\
     & + \sigma(h) \Big( (2 \gamma-2 \frac{\zeta'}{\zeta}(2)-\log h)^2 -4 \Big( \frac{\zeta'}{\zeta}\Big)' (2)\Big)
     +4 \sigma^{(1)}(h) (2 \gamma-2 \frac{\zeta'}{\zeta}(2) - \log h) + 4 \sigma^{(2)}(h),
\end{split}
\end{equation}
$\sigma^{(j)}(h) := \sum_{d \mid h} d (\log d)^j$,
and $\gamma$ is Euler's constant and 
\begin{equation}
  E_2(x,h) =O((x(x+h))^{\frac{1}{3}+\varepsilon} + h^{\frac{9}{40}}(x(x+h))^{\frac{1}{4}+\varepsilon} + h^{\frac{7}{10}}x^{\varepsilon}).
\end{equation}
Related work of Motohashi establishes 
that $\Theta =\frac{2}{3}$ is valid in \eqref{I2T}.
Meurman \cite{Me} showed that 
\begin{equation}
  \label{Meurman}
    E_2(x,h) = O((x(x+h))^{\frac{1}{3}+\varepsilon} +(x(x+h))^{\frac{1}{4}} x^{\varepsilon} \text{min}(x^{\frac{1}{4}}, h^{\frac{1}{8}+\frac{\alpha}{2}})),
\end{equation}
where $\alpha$ is a positive constant which satisfies 
\begin{equation}
  \label{Ramanujan}
|\rho_j(n)| \le n^{\alpha} |\rho_j(1)|
\end{equation}
 where $\{ \rho_j(n) \}_{n=1}^{\infty}$
are the Fourier coefficients of an orthonormal basis of the space of non-holomorphic cusp forms
for the full modular group.

There are also results for $D_{k,\ell}(x,h)$.  
Linnik developed highly original techniques using ideas from additive number theory 
and probability theory, most notably the dispersion method \cite{LI} to deal with $D_{k,2}(x,h)$ with 
$k \ge 2$. He proved an asymptotic formula for $D_{k,2}(x,h)$, obtaining the leading term
with an error term.  The error term was improved by Motohashi \cite{Mo}, who used large sieve methods.  
Recently, Topacogullari \cite{To1} established a main term with a power savings in the case of 
$D_{3,2}(x,h)$.  This filled in details of results, stated without proof, by Deshouillers \cite{De} and Bykovski and Vinogradov \cite{BV}.
Furthermore, Drappeau \cite{Dr} has recently provided a main term with a power savings in the error term for $D_{k,2}(x,h)$ with $k \ge 3$ and this too has recently been improved by Topacogullari \cite{To3}.
Despite these impressive results,  no asymptotic formula for $D_{k,\ell}(x,h)$  has been proven 
in the case both $k$ and $\ell$ are greater than two.  We now present a conjectural formula for $D_{k,\ell}(x,h)$.
\subsection{A conjectural formula for $D_{k,\ell}(x,h)$}
We follow the work of Ivi\'{c} and Conrey and Gonek to work out the leading term of the conjectured
main term for $D_{k,\ell}(x,h)$.  We shall be concerned with an expression of the form 
$D_{k,\ell}(x,h) = m_{k,\ell}(x,h) + E_{k,\ell}(x,h)$
where $m_{k,\ell}(x,h)$ is the ``main term" and $E_{k,\ell}(x,h)$ is the ``error term." 
In \cite{Iv1}, \cite{Iv2}, and \cite{CG},  $m_{k,\ell}(x,h)$ was studied via Duke, Friedlander, and Iwaniec's \cite{DFI} version 
of the circle method, known as the 
$\delta$-method.   One of the key ideas of the circle method is to detect an additive condition via additive characters.
Consequently, it is important to have an asymptotic formula for the exponential sums $\sum_{n \le x} \tau_k(n) e(\frac{an}{q})$
where $(a,q)=1$ and $e(\theta)  := e^{2 \pi i \theta}$.  Naturally, one must understand the Dirichlet series 
$\sum_{n=1}^{\infty} \tau_k(n) e(\frac{an}{q}) n^{-s}$.    Ivi\'{c} \cite{Iv2} obtained a meromorphic continuation of 
this series by decomposing it in terms of Hurwitz zeta functions.  On the other hand, Conrey-Gonek  \cite{CG} obtained
a meromorphic continuation by expressing  $e(\frac{an}{q})$ in terms of multiplicative Dirichlet characters. They showed that 
\begin{equation}
  \label{taukadditive}
   \sum_{n \le x} \tau_k(n) e \Big(\frac{an}{q} \Big) \sim \frac{1}{q} \int_{0}^{x} P_k(t,q) dt
\end{equation}
where $P_k(t,q)$ is defined by 
\begin{equation} 
  \label{Pk}
   P_k(t,q) = \frac{1}{2 \pi i} \int_{C}  \zeta(s+1)^k  G_{k}(q,s+1) \Big( \frac{x}{q} \Big)^s ds,
\end{equation}
$C = \{ z \in \mathbb{C} \ | \ |z| = \eta\}$ for $0 < \eta < \frac{1}{10}$, and for $k \in \mathbb{N}$, $s \in \mathbb{C}$, $G_{k}(\cdot,s) : \mathbb{N} \to \mathbb{C}$ is the multiplicative function defined by 
\footnote{Conrey and Gonek use the notation $G_k(s,n)$, whereas we use $G_k(n,s)$.} 
\begin{equation}
  \label{Gksndef}
  G_{k}(n,s) =  \sum_{a \mid n} \frac{\mu(a) a^s}{\phi(a)} \sum_{b \mid a} \frac{\mu(b)}{b^s} \sigma_{k} \Big( \frac{nb}{a},s \Big). 
\end{equation}
Using \eqref{taukadditive}, the $\delta$-method leads to 
\begin{equation}
   \label{mkxhIvic}
   m_{k,\ell}(x,h)=  \int_{0}^{x} \sum_{q=1}^{\infty} \frac{c_q(h)}{q^2} P_k(t,q) P_{\ell}(t+h,q)  dt
\end{equation}
where $c_q(h)=\sum_{\substack{a=1 \\ (a,q)=1}}^{q} e(\tfrac{an}{q})$ is the Ramanujan sum.    
From the identity  $\log(t+h) = \log t + O(h/t)$ (see \cite{Iv1}), it follows that
\begin{equation}
  \label{mkxhIvic2}
   m_{k,\ell}(x,h)=  \int_{0}^{x} \sum_{q=1}^{\infty} \frac{c_q(h)}{q^2} P_k(t,q)P_{\ell}(t,q)    dt +O(ht^{\varepsilon}).
\end{equation}  
We now simplify the integrands in 
\eqref{mkxhIvic} and \eqref{mkxhIvic2}.  We denote them  as
\begin{equation}
  \label{qk}
  q_{k,\ell}(t,h) :=  \sum_{q=1}^{\infty} \frac{c_q(h)}{q^2} P_k(t,q) P_{\ell}(t+h,q)
\end{equation}
and
\begin{equation}
  \label{rk}
  r_{k,\ell}(t,h) := \sum_{q=1}^{\infty} \frac{c_q(h)}{q^2} P_k(t,q)P_{\ell}(t,q). 
\end{equation}
Observe that \eqref{mkxhIvic} and \eqref{mkxhIvic2} imply 
\begin{equation}
    \label{integrandids}
    \int_{0}^{x} q_{k,\ell}(t,h) dt = \int_{0}^{x} r_{k,\ell}(t,h) +O(ht^{\varepsilon}).
\end{equation}
We first calculate $q_{k,\ell}(t,h)$. Applying \eqref{Pk} twice,  it follows that 
\begin{equation}
  \label{qkthintegral}
  q_{k,\ell}(t,h) =
  \frac{1}{(2 \pi i)^2} \int_{C_2} \int_{C_1}
  \zeta^k(s_1+1) \zeta^{\ell}(s_2+1)  \mathcal{D}_{k,\ell}(s_1,s_2)
 t^{s_1} (t+h)^{s_2}  ds_1 ds_2,  
\end{equation}
$C_1 = \{ s_1 \in \mathbb{C} \ | \ |s_1|=r_1\}$, $C_2 = \{ s_2 \in \mathbb{C} \ | \ |s_2|=r_2\}$, $0 < r_1,r_2 < \frac{1}{10}$, 
and 
\begin{equation}
  \label{Dk}
  \mathcal{D}_{k,\ell}(s_1,s_2) =  \sum_{q=1}^{\infty} \frac{c_q(h) G_{k}(q,s_1+1) G_{\ell}(q,s_2+1)}{q^{2+s_1+s_2}}.
\end{equation}
We now apply the residue theorem to the inner integral in \eqref{qkthintegral}. 
For each $k \in \mathbb{N}$, there exist constants $\alpha_{j,k}$ with $j \ge 0$ such that 
\begin{equation}
  \label{zetaklaurent}
 \zeta^k(s_1+1)  = s_1^{-k} (\alpha_{0,k}+ \alpha_{1,k} s_1 + \alpha_{2,k} s_1^2 + \cdots ),  \text{ where } \alpha_{0,k} =1.
\end{equation}
Furthermore, since 
\begin{align}
  \mathcal{D}_{k,\ell}(s_1,s_2) & = \mathcal{D}_{k,\ell}^{(0,0)}(0,s_2) + \mathcal{D}_{k,\ell}^{(1,0)}(0,s_2) s_1 + \frac{1}{2} \mathcal{D}_{k,\ell}^{(2,0)}(0,s_2) s_1^2 + \cdots, \text{ and } \\
  t^{s_1} & = 1 + (\log t) s_1 + \frac{1}{2} (\log t)^2 s_1^2 + \cdots
\end{align}
it follows that
\[
   \frac{1}{2 \pi i} \int_{C_1} \zeta^k(s_1+1) \mathcal{D}_{k,\ell}(s_1,s_2) t^{s_1} ds_1
   = \sum_{\substack{i_1+i_2 +i_3 = k-1 \\ i_1,i_2,i_3 \ge 0}}  \frac{\alpha_{i_1,k} \mathcal{D}_{k,\ell}^{(i_2,0)}(0,s_2) (\log t)^{i_3}}{i_{2}! i_{3}!}. 
\]
Thus 
\[
  q_{k,\ell}(t,h) = \sum_{ \substack{i_1+i_2 +i_3 = k-1 \\ i_1,i_2,i_3 \ge 0}}  \frac{\alpha_{i_1,k} (\log t)^{i_3}}{i_{2}! i_{3}!}
   \frac{1}{2 \pi i} \int_{C_2} 
    \zeta^{\ell}(s_2+1) \mathcal{D}_{k,\ell}^{(i_2,0)}(0,s_2) (t+h)^{s_2}
      ds_2.
\]
For each value of $i_2$, a similar calculation establishes 
\[
   \frac{1}{2 \pi i} \int_{C_2} \zeta^{\ell}(s_2+1) \mathcal{D}_{k,\ell}^{(i_2,0)}(0,s_2) (t+h)^{s_2} ds_2
   = \sum_{\substack{j_1+j_2 +j_3 = \ell-1 \\ j_1,j_2,j_3 \ge 0}}  \frac{\alpha_{j_1,\ell} \mathcal{D}_{k,\ell}^{(i_2,j_2)}(0,0) (\log (t+h))^{j_3}}{j_{2}! j_{3}!}
\]
and hence
\begin{equation}
  \label{qkth}
 q_{k,\ell}(t,h) = 
  \sum_{\substack{i_1,i_2,i_3 \ge 0 \\ i_1+i_2+i_3 = k-1}}   \frac{\alpha_{i_1,k}(\log t)^{i_3}}{i_2! i_3!}
  \sum_{\substack{j_1,j_2,j_3 \ge 0 \\j_1+j_2 +j_3 = \ell-1}} \frac{\alpha_{j_1,\ell}\mathcal{D}_{k,\ell}^{(i_2,j_2)}(0,0) (\log (t+h))^{j_3}}{j_{2}! j_3!}.
\end{equation}
An analogous computation establishes
\begin{equation}
  \label{rkth}
  r_{k,\ell}(t,h) = 
  \sum_{\substack{i_1,i_2,i_3 \ge 0 \\ i_1+i_2+i_3 = k-1}}   \frac{\alpha_{i_1,k}(\log t)^{i_3}}{i_2! i_3!}
  \sum_{\substack{j_1,j_2,j_3 \ge 0 \\j_1+j_2 +j_3 = \ell-1}} \frac{\alpha_{j_1,\ell}\mathcal{D}_{k,\ell}^{(i_2,j_2)}(0,0) (\log t)^{j_3}}{j_{2}! j_3!}.
\end{equation}
Formally,  \eqref{rkth} is obtained from \eqref{qkth} by replacing each $\log(t+h)$ by $\log(t)$. 
Observe that \eqref{rkth} can be further simplified. 
Let $i=i_3+j_3$ and note that $0 \le i \le k+\ell-2$ so that
\begin{equation}
   \label{rkth2} 
   r_{k,\ell}(t,h) =  \sum_{i=0}^{k+\ell-2} \alpha_i(h)  (\log x)^{k+\ell-2-i}
\end{equation}
where $\alpha_0(h) =  \frac{\mathcal{D}_{k,\ell}(0,0)}{(k-1)!(\ell-1)!}$, the term arising from 
$(i_1,i_2,i_3) =(0,0,k-1)$ and $(j_1,j_2,j_3)= (0,0,\ell-1)$.
We now show that $
\mathcal{D}_{k,\ell}(0,0)=c_{k,\ell}(h)=C_{k,\ell} f_{k,\ell}(h)$. 
This will be deduced from the following lemma.  This lemma will also be used in our proof of Theorem \ref{mainthm}. 
\begin{lemma} \label{f1f2prod}
Let $f_1, f_2$ be nonzero multiplicative functions,  $\tau_1, \tau_2$ be real numbers, and $h$ a
natural number such that 
\begin{equation}
   \label{Stau1tau2}
   \mathcal{S}(\tau_1,\tau_2;h) := \sum_{g \mid h} \frac{1}{g^{\tau_1}} \sum_{d=1}^{\infty} \frac{\mu(d) f_1(gd)f_2(gd)}{d^{\tau_2}}
\end{equation}
is absolutely convergent.  \\
(i) We have 
\begin{equation}
  \label{Sproduct0}
   \mathcal{S}(\tau_1,\tau_2;h) := \sum_{g \mid h} \frac{1}{g^{\tau_1}}
    \prod_{(p,g)=1} \Big ( 1 - \frac{f_1(p)
         f_2(p)}{p^{\tau_2}} \Big ) 
          \prod_{p^{\alpha} \mid \mid g} 
\Big( 
f_1(p^{\alpha})f_2(p^{\alpha}) - \frac{f_1(p^{\alpha+1})f_2(p^{\alpha+1})  }{ p^{\tau_2}}
\Big). 
\end{equation}
(ii)  If for every prime $p$,  $f_1(p)f_2(p) \ne p^{\tau_2}$,  then
\begin{equation}
   \label{Sproduct}
   \mathcal{S}(\tau_1,\tau_2;h) = 
   \prod_{p} \Big ( 1 - \frac{f_1(p)
         f_2(p)}{p^{\tau_2}} \Big )    
\prod_{p^{\alpha} \mid \mid h} 
\sum_{j=0}^{\alpha}  \Big( 
 \frac{f_1(p^{j}) f_2(p^{j})}{p^{\tau_1 j}} - \frac{f_1(p^{j+1})
         f_2(p^{j+1})}{ p^{\tau_1 j + \tau_2}}
\Big)  \Big ( 1 - \frac{f_1(p)
         f_2(p)}{p^{\tau_2}} \Big )^{-1}. 
\end{equation}
\end{lemma}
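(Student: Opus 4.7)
The plan is to evaluate, for each fixed $g \mid h$, the inner Dirichlet series
\[
T(g) := \sum_{d=1}^{\infty} \frac{\mu(d)\, f_1(gd)f_2(gd)}{d^{\tau_2}}
\]
as an Euler-type product, and then sum $g^{-\tau_1}T(g)$ over $g \mid h$. Because of $\mu(d)$, only squarefree $d$ contribute, but the catch is that $g$ and $d$ need not be coprime, so $f_i(gd)$ does not split directly. The decoupling step is the unique factorization $d = d_1 d_2$ in which every prime factor of $d_1$ divides $g$ while $(d_2,g)=1$: since $(gd_1,d_2)=1$, multiplicativity gives $f_i(gd) = f_i(gd_1)\,f_i(d_2)$, and $\mu(d) = \mu(d_1)\mu(d_2)$. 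Hence $T(g)$ factors as a product of a finite sum over $d_1$ and an Euler-product sum over $d_2$.

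The $d_2$-sum is a standard squarefree Euler product restricted to primes coprime to $g$, equal to
\[
\prod_{(p,g)=1}\bigl(1 - f_1(p)f_2(p)/p^{\tau_2}\bigr).
\]
For the $d_1$-sum, multiplicativity lets me factor over primes $p^{\alpha}\|g$: at each such $p$, the squarefree condition forces the $p$-part of $d_1$ to be either $1$ or $p$, contributing $f_1(p^{\alpha})f_2(p^{\alpha}) - f_1(p^{\alpha+1})f_2(p^{\alpha+1})/p^{\tau_2}$. Substituting back into $\sum_{g\mid h} g^{-\tau_1} T(g)$ yields \eqref{Sproduct0}, proving (i).

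For (ii), the hypothesis $f_1(p)f_2(p) \ne p^{\tau_2}$ allows me to extract the full Euler product $\prod_p(1 - f_1(p)f_2(p)/p^{\tau_2})$ as a common factor in front of the $g$-sum by dividing and multiplying each term by the missing $p\mid g$ factors. What remains is $\sum_{g\mid h} g^{-\tau_1} \prod_{p^{\alpha}\|g}[\cdot]\cdot(1-f_1(p)f_2(p)/p^{\tau_2})^{-1}$, which is multiplicative in the prime factorization of $g$; thus it factorizes as $\prod_{p^{\alpha}\|h}$ of a finite sum over $j=0,1,\dots,\alpha$, reproducing \eqref{Sproduct} (note that the $j=0$ term is just $1-f_1(p)f_2(p)/p^{\tau_2}$, which cancels the inverse factor and contributes $1$, handling the $\alpha_p=0$ case). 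There is no genuine obstacle in this argument; the only conceptual point requiring care is the splitting $d=d_1d_2$ relative to $g$, and once that is in place both parts are routine Euler-product manipulations.
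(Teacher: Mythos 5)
Your proposal is correct and follows essentially the same route as the paper: both arguments rest on Euler-factoring the inner $d$-sum for each fixed $g\mid h$, splitting the local factors into primes $p \nmid g$ and primes $p^{\alpha}\|g$, and then (for part (ii)) dividing and multiplying by $\prod_{p\mid g}(1-f_1(p)f_2(p)/p^{\tau_2})$ to extract the full Euler product before using multiplicativity of the remaining factor over $g \mid h$. Your explicit decomposition $d=d_1d_2$ into the $g$-supported and $g$-coprime parts is just a more spelled-out version of what the paper phrases as ``multiplicativity of the inner summand,'' so the two proofs are the same in substance.
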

The proof of this lemma is deferred to the end of the section. 
Using this lemma, we shall demonstrate 
\begin{equation}
  \label{Dk00identity}
\mathcal{D}_{k,\ell}(0,0)=c_{k,\ell}(h)=C_{k,\ell} f_{k,\ell}(h).
\end{equation} 
Inserting the identity  $c_q(h)=\sum_{d \mid h, d \mid q} d \mu(q/d)$ in \eqref{Dk}, 
exchanging summation, and making the variable change $q \to qd$ leads to 
\[
   \mathcal{D} _{k,\ell}(s_1,s_2) = \sum_{d \mid h} \frac{1}{d^{1+s_1+s_2}} 
    \sum_{q=1}^{\infty} \frac{\mu(q) G_{k}(qd,s_1+1) G_{\ell}(qd,s_2+1)}{q^{2+s_1+s_2}}.
\]
This is now in the form of the previous lemma.   We set $s_1=s_2=0$,  $f_1(n)= G_{k}(n,1)$, $f_2(n) = G_{\ell}(n,1)$, $\tau_1= 1$,
and $\tau_2=2$, to obtain
\begin{equation}
 \label{Dkprod}
  \mathcal{D}_{k,\ell}(0,0) = 
  \prod_{p} 
     \mathcal{E}(p,0)
      \prod_{p^\alpha \mid \mid h}  \frac{\sum_{j=0}^{\alpha}  \mathcal{E}(p,j)}{     \mathcal{E}(p,0)},     
\end{equation}
where 
\begin{equation}
    \label{Ekid}
     \mathcal{E}(p,j) :=    \frac{G_{k}(p^j,1)G_{\ell}(p^j,1)  }
            {p^{j}} - \frac{G_{k}(p^{j+1},1)G_{\ell}(p^{j+1},1)}{p^{j+2}}.
\end{equation}
We now show that $G_{k}(p^j,1) = \sigma_{k-1}(p^j,1)$.
Observe that by \cite[p. 592]{CG}
\begin{equation}
   \label{Gksids}
   G_{k}(p^{j},1) =  \Big(1-\frac{1}{p} \Big)^{-1}( \sigma_{k}(p^{j},1) - \sigma_{k}(p^{j-1},1)). 
\end{equation}
Thus by \eqref{sigmapjid} and \eqref{recurrence},   
\begin{align*}
  G_{k}(p^{j},1) &  
   =  \Big( 1-\frac{1}{p} \Big)^{-1}  \Big( 1- \frac{1}{p} \Big)^{k} \sum_{i=0}^{\infty} \frac{ \tau_k(p^{j+i}) - \tau_k(p^{j-1+i}) }{p^{is}} 
   = \Big( 1- \frac{1}{p} \Big)^{k-1} \sum_{i=0}^{\infty} \frac{ \tau_{k-1}(p^{j+i})  }{p^{is}}   
  = \sigma_{k-1}(p^j,1), 
\end{align*}
by definition.  Hence, 
\begin{equation}
  \label{Ekpj}
   \mathcal{E}(p,j) :=    \frac{\sigma_{k-1}(p^j,1)\sigma_{\ell-1}(p^j,1) }
            {p^{j}} - \frac{\sigma_{k-1}(p^{j+1},1)\sigma_{\ell-1}(p^{j+1},1) }{p^{j+2}}.
\end{equation}
Observe that 
\begin{equation}
\begin{split}
  \label{Ekp0}
   \mathcal{E}(p,0) & =1-\frac{ \sigma_{k-1}(p,1)  \sigma_{\ell-1}(p,1)  }{p^2}
 = 1 - (1- (1-p^{-1})^{k-1}))(1- (1-p^{-1})^{\ell-1})) \\
 &  = (1-p^{-1})^{k-1}+(1-p^{-1})^{\ell-1} - (1-p^{-1})^{k+\ell-2} 
\end{split}
\end{equation}
Thus, by \eqref{Dkprod}, \eqref{Ekpj}, and 
\eqref{Ekp0}, 
\[
   \mathcal{D}_{k,\ell}(0,0) = 
  C_{k,\ell}
      \prod_{p^\alpha \mid \mid h}  
      \frac{  \sum_{j=0}^{\alpha}  \frac{\sigma_{k-1}(p^j,1)\sigma_{\ell-1}(p^j,1) }
            {p^{j}} - \frac{\sigma_{k-1}(p^{j+1},1)\sigma_{\ell-1}(p^{j+1},1) }{p^{j+2}}}{
             (1-p^{-1})^{k-1}+(1-p^{-1})^{\ell-1} - (1-p^{-1})^{k+\ell-2}}.
\]
In order to show $\mathcal{D}_{k,\ell}(0,0) =c_{k,\ell}(h)$ it suffices to show the last product 
equals $f_{k,\ell}(h)$.   
By \eqref{fklpa} this is equivalent to showing 
\begin{equation} 
\begin{split}
 \label{keyidentity}
     & \Big(1-\frac{1}{p} \Big)^{-k-\ell+2} \sum_{j=0}^{\alpha} 
    \Big( \frac{ \sigma_{k-1}(p^j,1)\sigma_{\ell-1}(p^j,1)  }{p^{j}} 
    - \frac{\sigma_{k-1}(p^{j+1},1)\sigma_{\ell-1}(p^{j+1},1)}{p^{j+2}} \Big) \\
         & =
     1 + \sum_{i=1}^{\alpha} (\tau_k(p^i)\tau_{\ell}(p^i)-\tau_k(p^{i-1})\tau_{\ell}(p^{i-1}))X^i 
    +    \sum_{i=\alpha+1}^{\infty}(\tau_k(p^{\alpha}) \tau_{\ell-1}(p^i)X^i  
   + \tau_{\ell}(p^{\alpha})  \tau_{k-1}(p^i))X^i.
\end{split}
\end{equation} 
We denote this identity as $\mathcal{L}_{k,\ell}(\al)= \mathcal{R}_{k,\ell}(\al)$.  We prove this by 
induction on $\alpha$.   First, a calculation shows that 
\begin{equation}
 \label{L1R1}
  \mathcal{L}_{k,\ell}(1)= \mathcal{R}_{k,\ell}(1) = 
  k (1-X)^{-(\ell-1)}+\ell (1-X)^{-(k-1)} - (k-1)(\ell-1)X - k - \ell +1. 
\end{equation}
Assume that $\mathcal{L}_{k,\ell}(\al)= \mathcal{R}_{k,\ell}(\al)$ for $\al \in \mathbb{N}$.  
We aim to show that $\mathcal{L}_{k,\ell}(\alpha+1) = \mathcal{R}_{k,\ell}(\alpha+1)$.
 To simplify notation we set $X=\frac{1}{p}$.  
 Observe that 
\begin{align*}
\mathcal{L}_k(\alpha+1) - \mathcal{L}_k(\alpha) 
    & =  (\tau_k(p^{\alpha+1}) \tau_{\ell}(p^{\alpha+1})- \tau_k(p^{\alpha}) \tau_{\ell}(p^{\alpha})
    -\tau_k(p^{\alpha}) \tau_{\ell-1}(p^{\alpha+1})-\tau_{\ell}(p^{\alpha}) \tau_{k-1}(p^{\alpha+1})
    )X^{\alpha+1} \\
    & +  (\tau_k(p^{\alpha+1})- \tau_k(p^{\alpha})) \sum_{i=\alpha+2}^{\infty} \tau_{\ell-1}(p^i) X^i 
     +  (\tau_{\ell}(p^{\alpha+1})- \tau_{\ell}(p^{\alpha})) \sum_{i=\alpha+2}^{\infty} \tau_{k-1}(p^i) X^i\\
      & =   (\tau_k(p^{\alpha+1}) \tau_{\ell}(p^{\alpha+1})- \tau_k(p^{\alpha}) \tau_{\ell}(p^{\alpha})
    -\tau_k(p^{\alpha}) \tau_{\ell-1}(p^{\alpha+1})-\tau_{\ell}(p^{\alpha}) \tau_{k-1}(p^{\alpha+1})
    )X^{\alpha+1} \\
    & +  \tau_{k-1}(p^{\alpha+1}) \sum_{i=\alpha+2}^{\infty} \tau_{\ell-1}(p^i) X^i 
     +   \tau_{\ell-1}(p^{\alpha+1}) \sum_{i=\alpha+2}^{\infty} \tau_{k-1}(p^i) X^i.
\end{align*}
Next notice that we can simplify the coefficient of $X^{\alpha+1}$.  Observe that 
\[
   \tau_{k-1}(p^{\alpha+1}) \tau_{\ell-1}(p^{\alpha+1})= 
\tau_k(p^{\alpha+1}) \tau_{\ell}(p^{\alpha+1})- \tau_k(p^{\alpha}) \tau_{\ell}(p^{\alpha})
    -\tau_k(p^{\alpha}) \tau_{\ell-1}(p^{\alpha+1})-\tau_{\ell}(p^{\alpha}) \tau_{k-1}(p^{\alpha+1}).
\]
Rearranging, this is if and only if 
\[
   \tau_k(p^{\alpha+1}) \tau_{\ell}(p^{\alpha+1})= 
    \tau_{k-1}(p^{\alpha+1}) \tau_{\ell-1}(p^{\alpha+1})+
    \tau_k(p^{\alpha}) \tau_{\ell}(p^{\alpha})
    +\tau_k(p^{\alpha}) \tau_{\ell-1}(p^{\alpha+1})+\tau_{\ell}(p^{\alpha}) \tau_{k-1}(p^{\alpha+1}).
\]
Using \eqref{taukpj}  this is
\begin{align*}
   & \binom{k+\al}{\al+1} \binom{\ell+\al}{\al+1} = \\
  & \binom{k-1+\al}{\al+1}\binom{\ell-1+\al}{\al+1}+
   \binom{k+\al-1}{\al} \binom{\ell+\al-1}{\al} + \binom{k+\al-1}{\al} \binom{\ell +\al-1}{\al+1} + \binom{k+\al-1}{\al+1}
   \binom{\ell+\al-1}{\al}. 
\end{align*}
However, this last identity follows from two applications of Pascal's identity. Thus 
\begin{equation}
  \label{Lkdifference}
 \mathcal{L}_k(\alpha+1) - \mathcal{L}_k(\alpha)   =    \tau_k(p^{\alpha+1}) \tau_{\ell}(p^{\alpha+1})X^{\alpha+1}  +   \sum_{i=\alpha+2}^{\infty} (\tau_{k-1}(p^{\alpha+1}) \tau_{\ell-1}(p^i) 
     +   \tau_{\ell-1}(p^{\alpha+1})  \tau_{k-1}(p^i)) X^i.
\end{equation}
We now calculate $ \mathcal{R}_{k,\ell}(\al+1)- \mathcal{R}_{k,\ell}(\al)$. Observe that 
\begin{align*}
  & \mathcal{R}_{k,\ell}(\al+1)- \mathcal{R}_{k,\ell}(\al) =
  \Big(1- \frac{1}{p} \Big)^{-k-\ell+2}  X^{\al+1}
    \Big(  \sigma_{k-1}(p^{\alpha+1},1) \sigma_{\ell-1}(p^{\alpha+1},1)  - 
    \frac{\sigma_{k-1}(p^{\al+2},1)\sigma_{\ell-1}(p^{\al+2},1)}{p^2}  \Big) \\
  & =  \Big(1- \frac{1}{p} \Big)^{-k-\ell+2}  X^{\al+1} \cdot \\
  & \Big(
    \Big( \sigma_{k-1}(p^{\alpha+1},1)- \frac{ \sigma_{k-1}(p^{\alpha+2},1)}{p} \Big)  \sigma_{\ell-1}(p^{\alpha+1},1)  
    +   \frac{ \sigma_{k-1}(p^{\alpha+2},1)}{p}  \Big( \sigma_{\ell-1}(p^{\al+1},1)
    \frac{\sigma_{\ell-1}(p^{\al+2},1)}{p}  \Big)
  \Big).
\end{align*}
However, 
\begin{align*}
 \sigma_{k-1}(p^{\al+1},1)  - \frac{\sigma_{k-1}(p^{\al+2},1)}{p}  &  = \Big( 1-\frac{1}{p} \Big)^{k-1}
    \Big( \sum_{i=0}^{\infty} \frac{\tau_{k-1}(p^{\al+1+i})}{p^i}
  - \sum_{i=0}^{\infty} \frac{\tau_{k-1}(p^{\al+2+i})}{p^{i+1}} \Big) \\
  & = \Big( 1-\frac{1}{p} \Big)^{k-1} \tau_{k-1}(p^{\al+1})
\end{align*}
and thus
\begin{align*}
   \mathcal{R}_{k,\ell}(\al+1)- \mathcal{R}_{k,\ell}(\al) & =
      X^{\al+1} 
   \Big(
    \tau_{k-1}(p^{\al+1})
     \sigma_{\ell-1}(p^{\alpha+1},1)  
    +  \tau_{\ell-1}(p^{\al+1}) \frac{ \sigma_{k-1}(p^{\alpha+2},1)}{p}  
      \Big) \\
      & =   X^{\al+1} 
   \Big(
    \tau_{k-1}(p^{\al+1})
     \sum_{i=0}^{\infty} \frac{\tau_{\ell-1}(p^{\al+i+1})}{p^i}
    +  \tau_{\ell-1}(p^{\al+1})
    \frac{1}{p}  \sum_{i=0}^{\infty} \frac{\tau_{\ell-1}(p^{\al+2+1})}{p^i}
      \Big) \\
      & =   \tau_{k-1}(p^{\al+1})  \tau_{\ell-1}(p^{\al+1}) X^{\al+1} 
      + \sum_{i=\alpha+2}^{\infty}(\tau_{k-1}(p^{\alpha+1}) \tau_{\ell-1}(p^i) 
     +   \tau_{\ell-1}(p^{\alpha+1})  \tau_{k-1}(p^i))   X^i \\
     & =  \mathcal{L}_{k,\ell}(\al+1)- \mathcal{L}_{k,\ell}(\al), 
\end{align*}
by \eqref{Lkdifference}.  Hence, by the induction hypothesis $  \mathcal{L}_{k,\ell}(\al+1)= \mathcal{R}_{k,\ell}(\al+1)$ as desired.   Thus we have $\mathcal{L}_{k,\ell}(\al)= \mathcal{R}_{k,\ell}(\al)$ for all $\al \in \mathbb{N}$.
Consequently, we have proven \eqref{Dk00identity}.

In summary, we arrive at the following conjecture. 
\begin{conj} (Additive divisor conjecture) \label{adddivconj}
 Let  $k,\ell>1$ be a natural number and $x>0$ is large.  Then 
 there exists  a positive constant $\theta_{k,\ell} \in [\tfrac{1}{2},1)$ such that 
\begin{equation}
  \label{Dkxhconj}
  D_{k,\ell}(x,h) =  
  \int_{0}^{x} q_{k,\ell}(t,h) dt +  E_{k,\ell}(x,h),
\end{equation}
where $q_{k,\ell}(t,h)$ is given by \eqref{qkth} and for every $\varepsilon >0$
\begin{equation}
  \label{Ekxh}
  E_{k,\ell}(x,h) \ll x^{\vartheta_{k,\ell}+\varepsilon} \text{ uniformly for } 1 \le h \le x^{1-\varepsilon}.
\end{equation}
Moreover,  in \eqref{qkth}, the coefficient of $\log(t)\log(t+h)$ is  $\mathcal{D}_{k,\ell}(0,0) = C_{k,\ell} f_{k,\ell}(h)$
where $C_{k,\ell}$ is given by \eqref{Ckl} and $f_{k,\ell}(h)$ is the multiplicative function defined by \eqref{fklpa}.  
\end{conj}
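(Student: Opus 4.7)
The plan is to derive the conjectural formula in two stages: first establishing the shape of the main term via the $\delta$-method, then identifying the leading coefficient $\mathcal{D}_{k,\ell}(0,0)$ with $C_{k,\ell} f_{k,\ell}(h)$ by an algebraic identity. Only the error bound \eqref{Ekxh} is genuinely conjectural; the identification of the coefficient is a theorem which the remainder of the proof plan addresses in detail.

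For the shape of the main term, I would follow Duke-Friedlander-Iwaniec \cite{DFI}, Ivi\'{c} \cite{Iv1}, and Conrey-Gonek \cite{CG}: detect the additive condition $m - n = h$ via the $\delta$-method, feed in the meromorphic continuation of the twisted Dirichlet series $\sum \tau_k(n) e(an/q) n^{-s}$ expressed through \eqref{taukadditive}--\eqref{Gksndef}, and thus obtain heuristically the main term $\int_0^x q_{k,\ell}(t,h) \, dt$ with $q_{k,\ell}$ as in \eqref{qkth}. The double contour representation \eqref{qkthintegral}, combined with residue calculus around the order-$k$ and order-$\ell$ poles of $\zeta^k(s_1+1)\zeta^{\ell}(s_2+1)$ at the origin, yields the polynomial-in-$\log t$ structure of the integrand and pins down the leading term $\mathcal{D}_{k,\ell}(0,0)/(k-1)!(\ell-1)!$.

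For the identification of the leading coefficient, the key step is to expand the Ramanujan sum via $c_q(h) = \sum_{d \mid h,\, d \mid q} d\,\mu(q/d)$, substitute into \eqref{Dk}, swap summation, and rearrange into a form to which Lemma \ref{f1f2prod} applies with $f_1 = G_k(\cdot,1)$, $f_2 = G_\ell(\cdot,1)$, $\tau_1 = 1$, $\tau_2 = 2$. One then uses the Conrey-Gonek identity \eqref{Gksids} together with \eqref{sigmapjid} and the divisor recurrence \eqref{recurrence} to identify $G_k(p^j,1) = \sigma_{k-1}(p^j,1)$, yielding the Euler product in \eqref{Dkprod} with $\mathcal{E}(p,j)$ as in \eqref{Ekpj}. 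Matching the resulting prime-by-prime expression against $C_{k,\ell} f_{k,\ell}(h)$ reduces to the local identity $\mathcal{L}_{k,\ell}(\alpha) = \mathcal{R}_{k,\ell}(\alpha)$ displayed in \eqref{keyidentity}. I would prove this by induction on $\alpha$: verify the base case $\alpha = 1$ directly using \eqref{divseries}, then compute the differences $\mathcal{L}_{k,\ell}(\alpha+1) - \mathcal{L}_{k,\ell}(\alpha)$ and $\mathcal{R}_{k,\ell}(\alpha+1) - \mathcal{R}_{k,\ell}(\alpha)$ separately, exploiting on the $\mathcal{R}$-side the telescoping $\sigma_{k-1}(p^{\alpha+1},1) - p^{-1}\sigma_{k-1}(p^{\alpha+2},1) = (1-p^{-1})^{k-1} \tau_{k-1}(p^{\alpha+1})$, and matching the resulting coefficients of $X^{\alpha+1}$ via two applications of Pascal's identity on binomial coefficients.

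The main obstacle is of course the error-term bound \eqref{Ekxh} for $h$ as large as $x^{1-\varepsilon}$: this is the genuinely open part of the conjecture. Even for $(k,\ell) = (2,2)$, reaching $\vartheta = \frac{2}{3}$ requires Kuznetsov's formula and deep input from the spectral theory of Maass cusp forms (Deshouillers--Iwaniec, Motohashi, Meurman); for $(k,\ell) = (3,2)$ only the recent work of Topacogullari succeeds; and for $k,\ell \geq 3$ no method is currently known. In contrast, the derivation of the integral main term and the algebraic identification of the leading coefficient laid out above is fully rigorous once one accepts the $\delta$-method heuristic for contributions off the diagonal, and the inductive identity is entirely elementary once set up correctly.
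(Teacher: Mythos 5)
Your proposal is correct and follows essentially the same route as the paper: heuristic derivation of the main term via the $\delta$-method and double-contour residue calculus, then rigorous identification of $\mathcal{D}_{k,\ell}(0,0)$ with $C_{k,\ell}f_{k,\ell}(h)$ by expanding the Ramanujan sum, applying Lemma \ref{f1f2prod} with $f_1=G_k(\cdot,1)$, $f_2=G_\ell(\cdot,1)$, $\tau_1=1$, $\tau_2=2$, identifying $G_k(p^j,1)=\sigma_{k-1}(p^j,1)$, and proving the local identity $\mathcal{L}_{k,\ell}(\alpha)=\mathcal{R}_{k,\ell}(\alpha)$ by induction using the telescoping relation and Pascal's identity. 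You also correctly isolate the error bound \eqref{Ekxh} as the genuinely conjectural component, which is exactly the paper's stance.
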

To abbreviate notation we set $E_k(x,h)=E_{k,k}(x,h)$, $q_k(x,h) = q_{k,k}(x,h)$, and $\vartheta_{k}
=\vartheta_{k,k}$.  \\ 
\noindent {\bf Remarks}. 
\begin{enumerate}
\item It appears that Titchmarsh \cite{Tit2} was the first to conjecture the leading term in the asymptotic formula for  a weighted version of
$D_{3}(x,1)$, based on the circle method.   Vinogradov \cite{Vin} proposed the general form of a conjectural formula for $D_{k}(x,h)$ (see equation (2) of \cite{Vin}).  However, few
details were given and he did not provide any formulae for the coefficients of $q_{k}(t,h)$.  
Then in the nineties Ivi\'{c} \cite{Iv1}, \cite{Iv2} and Conrey-Gonek \cite{CG}
provided more precise formulae following Duke, Friedlander, and Iwaniec's $\delta$-method. 
\item It is not clear what is the true size of the error term $E_{k}(x,h)$ and various opinions have been expressed.
 Conrey and Gonek \cite{CG} conjectured that 
 $\vartheta_k= \frac{1}{2}$ in the case that $h \le \sqrt{x}$.  However, 
 Conrey and Keating \cite{CK} revised this  to  $\vartheta_k= \frac{1}{2}$ is 
 valid for all $h \le x^{1-\varepsilon}$. 
 Recent work of Farzad Aryan suggests that $\vartheta_2 =\frac{1}{2}$ 
 is the correct value.  In fact, Aryan \cite{Ar} shows that a smoothed variant of $D_2(x,h)$ has error term $O(x^{\frac{1}{2}+\varepsilon} h^{\alpha})$
where $\alpha$ is given by \eqref{Ramanujan} and conjecturally $\alpha=0$. 
On the other hand, Vinogradov \cite{Vin} conjectured that $E_{k}(x,h) \ll x^{1-\frac{1}{k}}$ in the case of $h$ {\it fixed}. 
 Ivi\'{c} \cite{Iv1} suggested that Vinogradov's bound was slightly too strong and that perhaps  
 $E_{k}(x,h) \ll x^{1-\frac{1}{k}} (\log x)^{C_k'}$ for a positive constant $C_k'$.  In light of these diverging opinions, 
 it would be beneficial to have numerical data checking this conjecture.  
\item   Note that the conjecture is sometimes written as 
\begin{equation}
   \label{Dkxhconj2}
  D_{k,\ell}(x,h) =  
  \int_{0}^{x} r_{k,\ell}(t,h) dt +  \tilde{E}_{k,\ell}(x,h),
\end{equation}
where $ \tilde{E}_{k,\ell}(x,h) \ll x^{\vartheta_{k,\ell}+\varepsilon} \text{ uniformly for } 1 \le h \le x^{1-\varepsilon}$.
By \eqref{integrandids} we may replace $q_{k,\ell}(t,h)$ by $r_{k,\ell}(t,h)$ with an error 
$O(x^{\frac{1}{2}+\varepsilon})$ for $h \le \sqrt{x}$.  Since we expect that $\theta_{k,\ell} \ge \frac{1}{2}$, 
it should not matter whether the main term in \eqref{Dkxhconj} or \eqref{Dkxhconj2} is used for $h \le \sqrt{x}$.
However, as we expect to have an asymptotic formula for $h \le x^{1-\varepsilon}$, 
it is preferable to use the form \eqref{Dkxhconj}.  
\item   In the case $k=\ell=2$, this conjecture agrees with Ingham's result \eqref{Ingham}. 
Note that $2 (1-\frac{1}{p})- (1-\frac{1}{p})^2  = 1-p^{-2}$
and thus 
$
  C_2  
  =  \prod_{p} (1-p^{-2}) =
  \frac{6}{\pi^2}$. 
Also, $\tau_{1}(p^j)=\tau_{1}(p^{j+1})=1$, $H_{1,j}(u)=H_{1,j+1}(u)=1$, and by \eqref{fklpa}
$f_2(p^{\alpha}) = \frac{\sum_{j=0}^{\alpha} ( \frac{1}{p^j} - \frac{1}{p^{j+2}} )}{1-p^{-2}}
   =\sum_{j=0}^{\alpha} \frac{1}{p^j} = \sigma_{-1}(p^{\alpha})$.
 \item Recently, Andrade, Bary-Soker, and Rudnick \cite{An} proved a function field version of the above conjecture. 
\item  Although  Conjecture \ref{adddivconj} remains open for $k \ge 3$, averaged versions 
have been established.  For instance, see \cite{BBMZ} and \cite{IW}. 
Recently, Matom\"aki, M.  Radziwi\l\l, and Tao \cite{Mat} have established an almost all result.  They have shown 
that  there exists $C_k >0$  such that if $x \ge H \ge (\log x)^{C_k} \ge 2$, then 
\[
  D_{k}(2x,h)-D_{k}(x,h) = \Big( \int_{x}^{2x} q_{k}(t,h) \, dt  \Big) (1+o(1))
\]
for all but $o(H)$ values of $|h| \le H$.  
\end{enumerate}

To complete this section we provide the proof of Lemma \ref{f1f2prod}.
\begin{proof}[Proof of Lemma \ref{f1f2prod}]
For each $g \mid h$, write $g = \prod_{p^{\alpha \mid \mid g}} p^{\alpha}$.
By multiplicativity of the inner summand it follows that 
\[
     \mathcal{S}(\tau_1,\tau_2;h) := \sum_{g \mid h} \frac{1}{g^{\tau_1}}  \Big(
     \prod_{(p,g)=1} \sum_{m=0}^{\infty} \frac{\mu(p^m) f_1(p^m) f_2(p^m)}{(p^m)^{\tau_2}}
     \Big) \cdot 
     \Big(
     \prod_{p^{\alpha} \mid \mid g} 
     \sum_{m=0}^{\infty} \frac{\mu(p^m) f_1(p^{m+\alpha}) f_2(p^{m+\alpha})}{(p^m)^{\tau_2}}
     \Big).
\]
Simplifying this expression, using that $\mu(1)=1, \mu(p)=-1$, and $\mu(p^m)=0$ for $m \ge 2$, 
\[
   \mathcal{S}(\tau_1,\tau_2;h) := \sum_{g \mid h} \frac{1}{g^{\tau_1}}
    \prod_{(p,g)=1} \Big ( 1 - \frac{f_1(p)
         f_2(p)}{p^{\tau_2}} \Big ) 
          \prod_{p^{\alpha} \mid \mid g} 
\Big( 
f_1(p^{\alpha})f_2(p^{\alpha}) - \frac{f_1(p^{\alpha+1})f_2(p^{\alpha+1})  }{ p^{\tau_2}}
\Big) 
\]
Since $f_1(p)f_2(p) \ne p^{\tau_2}$, we multiply and divide each summand by $\prod_{p \mid g} \Big ( 1 - \frac{f_1(p)
f_2(p)}{p^{\tau_2}} \Big )$ to obtain 
\begin{align}
 \mathcal{S}(\tau_1,\tau_2;h) & := \sum_{g \mid h} \frac{1}{g^{\tau_1}}
    \prod_{p} \Big ( 1 - \frac{f_1(p)
         f_2(p)}{p^{\tau_2}} \Big ) 
          \prod_{p^{\alpha} \mid \mid g} 
\Big( 
f_1(p^{\alpha})f_2(p^{\alpha}) - \frac{f_1(p^{\alpha+1})f_2(p^{\alpha+1})  }{ p^{\tau_2}}
\Big)  \Big ( 1 - \frac{f_1(p)
         f_2(p)}{p^{\tau_2}} \Big )^{-1}  \nonumber \\
      &  =  \prod_{p} \Big ( 1 - \frac{f_1(p)
         f_2(p)}{p^{\tau_2}} \Big ) 
    \sum_{g \mid h} \frac{1}{g^{\tau_1}}
    \prod_{p^{\alpha} \mid \mid g} 
\Big( 
f_1(p^{\alpha})f_2(p^{\alpha}) - \frac{f_1(p^{\alpha+1})f_2(p^{\alpha+1})  }{ p^{\tau_2}}
\Big)  \Big ( 1 - \frac{f_1(p)
         f_2(p)}{p^{\tau_2}} \Big )^{-1} \label{rpalpha}.
\end{align}
Let $r$ be a multiplicative function defined on prime powers by 
\[
  r(p^{\alpha}) = \Big( 
f_1(p^{\alpha})f_2(p^{\alpha}) - \frac{f_1(p^{\alpha+1})f_2(p^{\alpha+1})  }{ p^{\tau_2}}
\Big)  \Big ( 1 - \frac{f_1(p)
         f_2(p)}{p^{\tau_2}} \Big )^{-1}.
\]
The sum in \eqref{rpalpha} equals $\sum_{g \mid h} r(g)g^{-\tau_1}$.
By multiplicativity, 
$$\sum_{g \mid h} r(g) g^{-\tau_1} 
   = \prod_{p^{\alpha} \mid \mid h}  \sum_{j=0}^{\alpha} r(p^j)p^{-j \tau_1}
   = \prod_{p^{\alpha} \mid \mid h}  \sum_{j=0}^{\alpha}  \Big( 
 \frac{f_1(p^{j}) f_2(p^{j})}{p^{\tau_1 j}} - \frac{f_1(p^{j+1})
         f_2(p^{j+1})}{ p^{\tau_1 j + \tau_2}}
\Big)  \Big ( 1 - \frac{f_1(p)
         f_2(p)}{p^{\tau_2}} \Big )^{-1}.
$$
Inserting this expression in \eqref{rpalpha} we derive \eqref{Sproduct}.
\end{proof}

\section{A lower bound for $D_{k,\ell}(x,h)$} \label{Lowerbound}
In this section, we establish Theorem \ref{mainthm}, which provides a lower bound for $D_{k,\ell}(x,h)$. 
Before proving this result, we require a proposition which
gives an asymptotic estimate for a certain divisor sum. 
\begin{proposition} \label{selberg}
Let $k,\ell \in \mathbb{N}$ and $k,\ell \ge 2$. \\
(i) Then there exists $h_0 = h_0(k,\ell)>0$ such that for $h \ge h_0$, 
\begin{equation}
\begin{split}
   \label{case1}
   &  \sum_{\substack{a,b \le X \\ (a,b) \mid h}} \frac{ \tau_{k}(a)\tau_{\ell}(b)}{[a,b]} 
     = \frac{\tilde{C}_{k,\ell} g_{k,\ell}(h)}{ k!  \ell ! } (\log X)^{k+ \ell} +
    O_{k,\ell} \Big( \prod_{p \mid h}(1+p^{-1})^{k \ell }  (\log X)^{k+ \ell -1} \log \log h) \Big)  \\
    & + O_{k,\ell} \Big(  
    \exp \Big(  (12.94 m^4-12.81 m^3 +4.52 m^2 )
    \frac{(\log h)^{1-0.99/m}}{ \log \log h}  \Big)
    (\log X)^{2 m} X^{-0.99/M}  \Big)
\end{split}
\end{equation}
where $m=\min(k,\ell)$, $M=\max(k,\ell)$,
\begin{equation}
  \label{tilCell}
  \tilde{C}_{k,\ell}  = \prod_p \Big (  \Big
       (1-\frac{1}{p} \Big )^{k}
       +(1-\frac{1}{p} \Big )^{\ell}
        -
       \Big ( 1-\frac{1}{p} \Big )^{k+\ell} \Big ),
\end{equation}
and $g_{k,\ell}$ is the multiplicative function defined on prime powers by 
\begin{equation}
 \label{gldef}
  g_{k,\ell}(p^{\alpha}) :=   
    \sum_{j=0}^{\alpha} 
    \Big( \frac{\sigma_{k}(p^j,1)\sigma_{\ell}(p^j,1)  }
            {p^{j}} - \frac{\sigma_{k}(p^{j+1},1)\sigma_{\ell}(p^{j+1},1)}{p^{j+2}} \Big)
  \Big(   \Big(1-\frac{1}{p} \Big)^{k} +
       \Big(1-\frac{1}{p} \Big)^{\ell} - 
       \Big( 1-\frac{1}{p} \Big )^{k+\ell} 
       \Big)^{-1}.
\end{equation}
(ii) If $1 \le h < h_0$, then the same result holds as in equation \eqref{case1} except the second $O_{k,\ell}$ term
in this equation
is replaced by $O_{k,\ell}( (\log X)^{2 k} X^{-0.99/M})$ where $O_{k,\ell}$ constant is polynomial in 
$k$ and $\ell$.   
%
\end{proposition}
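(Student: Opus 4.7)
My plan is to attack $\mathcal{S}(X;h):=\sum_{a,b\le X,\,(a,b)\mid h} \tau_k(a)\tau_\ell(b)/[a,b]$ via the Mellin--Perron method applied to its associated two-variable Dirichlet series
\[
\widetilde{A}(s_1,s_2;h):=\sum_{\substack{a,b\ge 1\\(a,b)\mid h}}\frac{\tau_k(a)\tau_\ell(b)}{[a,b]\,a^{s_1}b^{s_2}}.
\]
Parameterizing by $g=(a,b)$, writing $a=ga'$, $b=gb'$ with $(a',b')=1$, and removing the coprimality by Möbius gives the clean factorization
\[
\widetilde{A}(s_1,s_2;h)=\zeta(1+s_1)^k\zeta(1+s_2)^\ell\,G(s_1,s_2;h),
\]
\[
G(s_1,s_2;h)=\sum_{g\mid h}\frac{1}{g^{1+s_1+s_2}}\sum_{q\ge 1}\frac{\mu(q)\,\sigma_k(gq,1+s_1)\,\sigma_\ell(gq,1+s_2)}{q^{2+s_1+s_2}},
\]
using the identity $\sum_{a\ge 1}\tau_k(na)a^{-s}=\sigma_k(n,s)\zeta(s)^k$ from \eqref{sigmams}. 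The Euler product of $G$ converges absolutely in $\Re s_1,\Re s_2>-\tfrac12+\varepsilon$, with each local factor $1+O_{k,\ell}(p^{-2})$ at primes $p\nmid h$ and given by an explicit finite polynomial in $\sigma_k(p^j,1),\sigma_\ell(p^j,1)$ for $p^{\alpha}\mid\mid h$.

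Applying Perron in each variable and shifting both contours across the joint pole at $s_1=s_2=0$ of total order $k+\ell$ then yields, as leading double-residue, $\frac{G(0,0;h)}{k!\,\ell!}(\log X)^{k+\ell}$. The expression for $G(0,0;h)$ displayed above is exactly $\mathcal{S}(1,2;h)$ in the notation of Lemma~\ref{f1f2prod} with $f_1(n)=\sigma_k(n,1)$ and $f_2(n)=\sigma_\ell(n,1)$; applying Lemma~\ref{f1f2prod}(ii) and invoking $\sigma_k(p,1)/p=1-(1-1/p)^k$ from \eqref{sigmaps} recovers the Euler factor $(1-1/p)^k+(1-1/p)^\ell-(1-1/p)^{k+\ell}$ of $\widetilde{C}_{k,\ell}$ in \eqref{tilCell} together with the $g_{k,\ell}$-factors of \eqref{gldef}. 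This is entirely parallel to the evaluation of $\mathcal{D}_{k,\ell}(0,0)$ in \S\ref{History}, with $k-1,\ell-1$ replaced by $k,\ell$. The first error term in \eqref{case1} then comes from the subleading $(\log X)^{k+\ell-1}$ residue coefficient, where a logarithmic derivative of the Euler product of $G$ introduces $\sum_{p\mid h}\log p/p\ll\log\log h$, and the size bound $G(0,0;h)\ll_{k,\ell}\prod_{p\mid h}(1+1/p)^{k\ell}$ (the analogue of \eqref{fkpaestimate} for $g_{k,\ell}$) supplies the prefactor.

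The main obstacle is controlling the shifted-contour error uniformly in $h$. I would push both contours to $\Re s_i=-\eta$ for $\eta$ slightly less than $0.99/M$, so that the integrand gains a factor $X^{-2\eta}$ offset by the supremum of $|\zeta(1+s_i)|$ and $|G(s_1,s_2;h)|$ on the shifted lines. Bounding $|\sigma_k(p^j,1)|\le\tau_k(p^j)\le(j+1)^{k-1}$, one gets $|G_p|\ll 1+O(p^{-1+\eta})$ when $p\nmid h$ and $|G_p|\ll_{k,\ell}(\alpha+1)^{O(k\ell)}p^{\eta\alpha}$ when $p^{\alpha}\mid\mid h$. Taking logarithms, summing via the Mertens-type estimate $\sum_{p\le h}p^{-1+\eta}\ll h^\eta/(\eta\log h)$, and optimizing the shift $\eta$ against the size of $G$ produces the factor $\exp\bigl(\mathrm{poly}(m)(\log h)^{1-0.99/m}/\log\log h\bigr)$; the asymmetry between $m=\min(k,\ell)$ (which governs the effective convergence of the $G$-Euler product on the shifted lines) and $M=\max(k,\ell)$ (which governs the admissible contour shift) accounts for the precise exponents appearing in \eqref{case1}. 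Part~(ii) is then immediate: for $h<h_0(k,\ell)$, the $h$-dependent exponential factor collapses to a constant depending polynomially on $k,\ell$.
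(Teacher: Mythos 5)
Your overall strategy---factoring the two-variable Dirichlet series as $\zeta(1+s_1)^{k}\zeta(1+s_2)^{\ell}$ times an Euler product $G$, extracting the main term as the double residue at $s_1=s_2=0$, identifying $G(0,0;h)$ via Lemma~\ref{f1f2prod} with $f_1=\sigma_k(\cdot,1),\ f_2=\sigma_\ell(\cdot,1)$, and bounding $G$ and its derivatives to control the lower-order and error terms---is precisely what the paper does. The identifications $\mathcal{B}(0,0)=\tilde C_{k,\ell}\,g_{k,\ell}(h)$, the $\log\log h$ factor from $\mathcal{B}^{(i_1,i_2)}(0,0)$, and the $\Theta(z,h)=\prod_{p\mid h}(1+p^{-z})^{k\ell}$ envelope all match.

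The genuine gap is in the error analysis: you propose a direct double Perron followed by simultaneous contour shifts, but you do not address the fact that the resulting shifted integrals do not converge absolutely. With a sharp cutoff, Perron produces a kernel $X^{s}/s$ with only $1/|s|$ decay, and on a line $\Re(s)=-\eta$ the integral
$\int_{1}^{\infty}|\zeta(1-\eta+it)|^{k}\,t^{-1}\,dt$
diverges (partial summation against the fourth-moment-type bound $\int_1^T|\zeta(1-\eta+it)|^k\,dt\ll T^{1+\varepsilon}$ still leaves a $\log$-divergence), so one cannot simply ``shift both contours'' as stated; one would have to truncate Perron at some height $T$, track the truncation errors in both variables, and optimize $T$ against the shift, which is considerably more delicate and is not sketched. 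The paper explicitly sidesteps this by replacing the sharp cutoff with smooth functions $\phi_{\eta,\epsilon}$ whose Mellin transforms satisfy $\Phi(s)\ll\epsilon^{-1}/|s(s+1)|$ (see \eqref{Phibd}), so that Lemma~\ref{zetaintegral} gives absolute convergence of each shifted-line integral, and then recovers the truncated sum by sandwiching between a smooth minorant and majorant with $\epsilon=(\log X)^{-m}$. Additionally, the paper shifts the two contours asymmetrically (to $-1/k$ and $-1/\ell$ rather than both to $-\eta$); this is what produces the specific exponent $X^{-0.99/M}$ with the $\Theta(1-0.99/m,h)$ factor in \eqref{case1}, whereas your symmetric shift would give a different balance between the $X$-power and the $\Theta$-factor, and the constraint $\sigma_1+\sigma_2>-0.99$ from Lemma~\ref{Blemma} is tight for $M=2$ under a symmetric shift. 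To complete your argument you would either need to supply a truncated-Perron analysis with uniform control in $h$, or adopt the smoothing-and-unsmoothing device.
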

We have not tried to obtain the best possible error term here. 
Note that the sum in this proposition bears some resemblance to the quadratic forms that occur in the standard 
Selberg sieve \cite{Se}.   A similar sum is studied in \cite{DIT}.

With these two results in hand, we prove our main result.
\begin{proof}[Proof of Theorem \ref{mainthm}]
Let $x > 1$.
For the lower bound, we make use of the identity
\begin{equation}
   \label{lbid}
   \tau_k(m) \ge \sum_{\substack{d \mid m \\ d \le \sqrt{x}}} \tau_{k-1}(d) \text{ for } m \ge x.
\end{equation}
It follows that  
\begin{align*}
 \sum_{x \le n \le 2x} \tau_k(n) \tau_{\ell}(n+h) & \ge 
\sum_{x \le n \le 2x} \sum_{\substack{a \mid n \\ a \le \sqrt{x}}} \tau_{k-1}(a) \sum_{\substack{b \mid n+h \\ b \le \sqrt{x+h}}} \tau_{\ell-1}(b) \\
& = \sum_{a \le \sqrt{x}} \sum_{b \le \sqrt{x+h}} \tau_{k-1}(a) \tau_{\ell-1}(b) \sum_{\substack{x \le n \le 2x \\ a \mid n \\ b \mid n+h}} 1.
\end{align*} If $(a,b) \mid h$, the inner sum is $\frac{x}{[a,b]} + O(1)$ and 
otherwise it is 0. 
Thus 
\begin{align*}
  \sum_{x \le n \le 2x} \tau_k(n) \tau_{\ell}(n+h) & \ge \sum_{a \le \sqrt{x}} \sum_{\substack{b \le \sqrt{x} \\ (a,b) \mid h}} \tau_{k-1}(a)\tau_{\ell-1}(b) \Big( \frac{x}{[a,b]} + O(1) \Big). 
\end{align*}
The $O(1)$ term contributes
$$ \Big(\sum_{a \le \sqrt{x}} \tau_{k-1}(a) \Big)
\Big(\sum_{a \le \sqrt{x}} \tau_{\ell-1}(a) \Big) \ll  ( \sqrt{x} (\log x)^{k-2}) ( \sqrt{x} (\log x)^{\ell-2})
 \ll x (\log x)^{k+\ell-4}$$
and by 
Proposition \ref{selberg} with $X =\sqrt{x}$
\begin{equation}
\begin{split}
   \label{specialsum}
    & \sum_{\substack{a,b \le \sqrt{x} \\ (a,b) \mid h}} \frac{ \tau_{k-1}(a)\tau_{\ell-1}(b)}{[a,b]}  =
     \frac{\tilde{C}_{k-1,\ell-1} g_{k-1,\ell-1}(h)}{(k-1)! (\ell-1)! 2^{k+\ell-2}}   (\log x)^{k+\ell-2} \\
     & +O_{k} \Big(g_{k-1,\ell-1}(h) (\log x)^{k+\ell-3} \log \log h
     + \exp\Big( \frac{\mathcal{C}_{k,\ell} (\log h)^{\vartheta}}{\log \log h}   \Big) \frac{ (\log x)^{2m-2}}{ x^{\beta}}
     \Big)
\end{split}
\end{equation}
where $\tilde{C}_{k-1,\ell-1}$ is defined by \eqref{tilCell}, 
$\vartheta=1-\frac{0.99}{m-1}$, 
and $\beta=\frac{0.495}{m-1}$, and $\mathcal{C}_{k,\ell}$ is a positive constant depending on $k$ and $\ell$.  It may be checked that $g_{k-1,\ell-1}(h) \gg_{k,\ell} 1$ for all $h \in \mathbb{N}$. 
The second error term in \eqref{specialsum} is dominated by the first if and only if 
$x^{\beta}(\log x)^{k+\ell-2m-1}  \gg \frac{\exp(\mathcal{C}_{k,\ell} \frac{(\log h)^{\vartheta}}{\log \log h} ) }{\log \log h}$.
In other words, 
\begin{equation}
   \label{desiredinequality}
    \exp(\beta \log x +(k+\ell-2m-1)\log_2 x) \gg 
    \exp\Big(\frac{\mathcal{C}_{k,\ell}(\log h)^{\vartheta}}{\log \log h} - \log_3 h  \Big). 
\end{equation}
This inequality will hold if we impose the condition $\frac{\mathcal{C}_{k,\ell}(\log h)^{\vartheta}}{\log \log h} 
\le \frac{\beta}{2} \log x$.   This implies that $\log_2 h \ll \log_2 x$.   Therefore 
$\mathcal{C}_{k,\ell}(\log h)^{\vartheta}
\le \frac{\beta}{2} \log x \log \log x$. Solving for $h$ we find that 
$h \le \exp(\tilde{B}_{k,\ell} (\log x \log \log x)^{\frac{1}{\vartheta}})$ for some positive $\tilde{B}_{k,\ell}$. 
Combining the above,
\begin{equation}
     \label{dyadic}
      \sum_{x \le n \le 2x} \tau_k(n) \tau_{\ell}(n+h) \ge 
     \frac{  \tilde{C}_{k-1,\ell-1} g_{k-1,\ell-1}(h)}{(k-1)! (\ell-1)! 2^{k+\ell-2}}  x(\log x)^{k+\ell-2}
     + O_{k}(g_{k-1}(h)x (\log x)^{2k-3} \log \log h),
\end{equation}
as long as $h \le \exp(\tilde{B}_{k,\ell} (\log x \log \log x)^{\frac{m-1}{m-1.99}})$.
Now split the interval $[\sqrt{x},x]$ into $O(\log x)$ dyadic intervals and apply \eqref{dyadic} to obtain
\begin{equation}
  \label{lowerbound}
  \sum_{\sqrt{x} \le n \le x} \tau_k(n) \tau_{\ell}(n+h)  \ge
   \frac{ C_{k,\ell} f_{k,\ell}(h)}{(k-1)! (\ell-1)! 2^{k+\ell-2}}  x(\log x)^{k+\ell-2}
    + O_{k}(f_{k,\ell}(h)x (\log x)^{2k-3} \log \log h)
\end{equation}
valid for 
$h \le \exp(B_{k,\ell} (\log x \log \log x)^{\frac{m-1}{m-1.99}})$  for another positive constant $B_{k,\ell}$, 
where $C_{k,\ell} := \tilde{C}_{k-1,\ell-1}$ and $f_{k,\ell}(h) := g_{k-1,\ell-1}(h)$.  
Since $\tau_k$  and $\tau_{\ell}$ are positive functions, 
we establish the theorem. 
\end{proof}
\noindent {\bf Remark}.  The above argument in the case $k=2$ yields an asymptotic formula for 
$D_2(x,h)$.  This  is essentially the argument Ingham used in \cite{In} and \cite{In2} to obtain first
an upper bound and then an asymptotic for $D_2(x,h)$. 

We have reduced the proof of Theorem \ref{mainthm}  to a verification of Proposition \ref{selberg}.
Not surprisingly, we must understand the double Dirichlet series 
\[
    \mathcal{A}(s_1,s_2) = \sum_{\substack{ a,b \ge 1 \\ (a,b) \mid h}} \frac{\tau_{k}(a) \tau_{\ell}(b)}{[a,b] a^{s_1} b^{s_2}}. 
\]
We shall show that $  \mathcal{A}(s_1,s_2) =\zeta(s_1+1)^{k} \zeta(s_2+1)^{\ell}   \mathcal{B}(s_1,s_2)$ where
\begin{equation}
  \label{B}
   \mathcal{B}(s_1,s_2) =  \sum_{g \mid h} \frac{1}{g^{s_1+s_2+1}} 
     \sum_{d=1}^{\infty} \frac{\mu(d) \sigma_{k}(gd,s_1+1) \sigma_{\ell}(gd,s_2+1)}{d^{s_1+s_2+2}}
\end{equation}
and we recall that $\sigma_{k}$ is the multiplicative function defined by 
$\sigma_{k}(n,s)= \Big( \sum_{a=1}^{\infty} \frac{\tau_{k}(na)}{a^s}  \Big) \zeta(s)^{-k}$.
(Some properties of $\sigma_{k}$ are listed in subsection \ref{divisorproperties}.)
We require the following bounds on $\mathcal{B}(s_1,s_2)$. 
\begin{lemma} \label{Blemma}
For $z \in \mathbb{C}$ and $h \in \mathbb{N}$, set 
\begin{equation}
  \label{Thetadefn}
 \Theta(z,h) = \prod_{p \mid h} (1 + p^{-z})^{k \ell }.
\end{equation}
(i)  Let  $\sigma_1= \Re(s_1)$ and $\sigma_2=\Re(s_2)$.  Then 
\begin{equation} 
  \label{Bbd}
|\mathcal{B}(s_1,s_2)| \ll  \Theta(\sigma_1+\sigma_2+1,h)  \text { for } \sigma_1, \sigma_2 \ge -0.99,   \sigma_1+ \sigma_2 \ge -0.99.
\end{equation}
(ii)  We have 
\begin{equation}
    \mathcal{B}(0,0) =  \tilde{C}_{k,\ell} g_{k,\ell}(h).
\end{equation}
(iii)
\begin{equation}
  \mathcal{B}^{(i_1,i_2)}(0,0)   \ll_{k,\ell}
  \Theta(1,h) (\log \log h)^{i_1+i_2}.
\end{equation}
\end{lemma}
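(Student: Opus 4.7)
The first step is to apply Lemma \ref{f1f2prod}(ii) to \eqref{B} with $f_1(n)=\sigma_{k}(n,s_1+1)$, $f_2(n)=\sigma_{\ell}(n,s_2+1)$, $\tau_1=s_1+s_2+1$ and $\tau_2=s_1+s_2+2$. The non-vanishing hypothesis holds because by \eqref{sigmaps},
\[
\frac{\sigma_{k}(p,s_1+1)\sigma_{\ell}(p,s_2+1)}{p^{s_1+s_2+2}}=\bigl(1-(1-p^{-(s_1+1)})^{k}\bigr)\bigl(1-(1-p^{-(s_2+1)})^{\ell}\bigr),
\]
which has absolute value strictly less than $1$ in the strip $\sigma_1,\sigma_2\ge-0.99$. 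This represents $\mathcal{B}(s_1,s_2)$ as an Euler-factored product
\[
\mathcal{B}(s_1,s_2)=\prod_{(p,h)=1}(1-A_p)\cdot\prod_{p^{\alpha}\mid\mid h}L_p(s_1,s_2),
\]
where $A_p$ is the quantity displayed above and $L_p$ is the finite local sum appearing in Lemma \ref{f1f2prod}(ii). This single identity is the basis for all three parts.

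For (ii), I would specialize to $s_1=s_2=0$. A direct calculation gives $1-A_p=(1-p^{-1})^{k}+(1-p^{-1})^{\ell}-(1-p^{-1})^{k+\ell}$, whence $\prod_p(1-A_p)=\tilde{C}_{k,\ell}$. The remaining local ratio $L_p/(1-A_p)$ at each $p^{\alpha}\mid\mid h$ matches the definition \eqref{gldef} of $g_{k,\ell}(p^{\alpha})$ term by term, and multiplicativity then delivers $\mathcal{B}(0,0)=\tilde{C}_{k,\ell}\,g_{k,\ell}(h)$.

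For (i), the $(p,h)=1$ factor is an absolutely convergent Euler product of size $O_{k,\ell}(1)$, since $|A_p|\ll_{k,\ell}p^{-(\sigma_1+\sigma_2+2)}$ and $\sigma_1+\sigma_2+2\ge 1.01$ in the given region. For each $p\mid h$ I would estimate $L_p$ by combining the polynomial representation \eqref{Hkj2}, which yields the uniform bound $|\sigma_{k}(p^{j},s_1+1)|\ll_{k}\tau_{k}(p^{j})$ for $\Re(s_1+1)\ge 0.01$, with the near-telescoping identity
\[
\sigma_{k}(p^{j},s)-\sigma_{k}(p^{j+1},s)/p^{s}=(1-p^{-s})^{k}\tau_{k}(p^{j})
\]
that one extracts from \eqref{sigmapjid}. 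After rearrangement, $L_p$ is dominated by a constant times $(1+p^{-(\sigma_1+\sigma_2+1)})^{k\ell}$, and taking the product over $p\mid h$ produces $\Theta(\sigma_1+\sigma_2+1,h)$.

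Part (iii) is deduced from (i) by Cauchy's integral formula on the bidisc $|s_1|=|s_2|=\rho$ with $\rho=c/\log\log h$ for a sufficiently small $c>0$:
\[
\mathcal{B}^{(i_1,i_2)}(0,0)=\frac{i_1!\,i_2!}{(2\pi i)^{2}}\oint_{|s_1|=\rho}\oint_{|s_2|=\rho}\frac{\mathcal{B}(s_1,s_2)}{s_1^{i_1+1}s_2^{i_2+1}}\,ds_1\,ds_2.
\]
On this bidisc $\sigma_1+\sigma_2+1=1+O(1/\log\log h)$, so $\Theta(\sigma_1+\sigma_2+1,h)\ll\Theta(1,h)$ by the Lipschitz continuity of $\log\Theta(\cdot,h)$ near $z=1$; combining this with (i) and the standard maximum-modulus estimate gives the claimed $\mathcal{B}^{(i_1,i_2)}(0,0)\ll\Theta(1,h)(\log\log h)^{i_1+i_2}$. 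The main obstacle is the uniform bound on $L_p$ in (i): a naive coefficientwise bound produces $\sum_{j\le\alpha}\tau_{k}(p^{j})\tau_{\ell}(p^{j})p^{-j(\sigma_1+\sigma_2+1)}$, which does not reduce to $(1+p^{-z})^{k\ell}$ as $z\to 0$, so one must exploit the cancellation inherent in the differences $a_j-a_{j+1}/p$ that constitute $L_p$ rather than bounding each summand in isolation.
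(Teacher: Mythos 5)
Your plan diverges from the paper's at the very first step, and the divergence creates a real gap. You propose to apply Lemma \ref{f1f2prod}(ii) throughout the region $\sigma_1,\sigma_2\ge -0.99$, justified by the claim that $A_p:=\bigl(1-(1-p^{-(s_1+1)})^{k}\bigr)\bigl(1-(1-p^{-(s_2+1)})^{\ell}\bigr)$ has modulus strictly less than $1$ there. That claim is false. Take $p=2$, $k\ge 2$, and $s_1=-0.99+\pi i/\log 2$, so that $p^{-(s_1+1)}=-2^{-0.01}\approx -0.993$; then $|1-(1-p^{-(s_1+1)})^{k}|=|1-(1.993)^{k}|>1$, and for suitably chosen $s_2$ the product $A_p$ has modulus of order $2^{k+\ell}$. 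So the non-vanishing hypothesis of Lemma \ref{f1f2prod}(ii) is not verified and the fully factored Euler product is not available on the strip; indeed, a nonzero absolutely convergent sum over $g\mid h$ (which is what Lemma \ref{f1f2prod}(i) gives) does not automatically rearrange into a product of local factors when one of those local factors could vanish. The paper sidesteps this entirely: it establishes the bound in (i) from the representation \eqref{Bformula}, i.e.\ the sum over $g\mid h$ provided by Lemma \ref{f1f2prod}(i), where no non-vanishing is needed, bounding each term via the estimates \eqref{Cp01} and \eqref{Cpj2}. It only invokes Lemma \ref{f1f2prod}(ii), and hence the factored form \eqref{Bfact}--\eqref{B2}, for parts (ii) and (iii), which concern $\mathcal{B}$ and its derivatives at the single point $(0,0)$; there the non-vanishing of $\mathcal{C}(p,0,s_1,s_2)$ is verified only on a small bidisc $|s_1|,|s_2|<\varepsilon_0$ by continuity from the value $\mathcal{C}(p,0,0,0)=(1-p^{-1})^{k}+(1-p^{-1})^{\ell}-(1-p^{-1})^{k+\ell}\ne 0$. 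You would need to adopt this same split to make your argument for (i) go through; the closing caveat you raise about bounding $L_p$ coefficientwise is symptomatic of the same problem.

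Your treatment of (iii) is a genuinely different and perfectly viable route. The paper factors $\mathcal{B}(s_1,s_2)=\Theta(s_1+s_2+1,h)\,\tilde{\mathcal{B}}(s_1,s_2)$ on the small bidisc, shows $\tilde{\mathcal{B}}^{(i_1,i_2)}(0,0)\ll_{k,\ell}1$, and then deduces the bound from the Leibniz rule together with the inductively proved estimate $\Theta^{(\alpha)}(1,h)\ll\Theta(1,h)(\log\log h)^{\alpha}$ (equations \eqref{thetaprime}--\eqref{etaubd}). Your contour-shrinking argument avoids differentiating $\Theta$ altogether, substituting instead the inequality $\Theta(1-2\rho,h)\ll_{k,\ell}\Theta(1,h)$ for $\rho\asymp 1/\log\log h$. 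This inequality is correct and follows from
\[
\log\Theta(1-\delta,h)-\log\Theta(1,h)\le k\ell\,\delta\sum_{p\mid h}p^{-1+\delta}\log p\ll_{k,\ell}\delta\log\log h\ll_{k,\ell}1,
\]
splitting the sum at $p=\log h$ and using $\omega(h)\ll\log h/\log\log h$ for the tail, but you should supply this calculation explicitly; it is roughly of the same length as the paper's inductive treatment of $\Theta^{(\alpha)}$, so the two approaches are comparable in cost, with yours arguably cleaner once (i) is correctly in place.
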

We also require a bound for a certain zeta integral. 
\begin{lemma} \label{zetaintegral}
Let $0<\varepsilon <1$,  $r \in \mathbb{N}$, $s\in \mathbb{C}$  with $\Re(s) \ge -\frac{1}{r}$, then 
\begin{equation}
   \int_{-\infty}^{\infty} 
   \min\Big( \frac{1}{|s|}, \frac{\varepsilon^{-1}}{|s(s+1)|} \Big) |\zeta(s+1)|^{r} dt
   \ll_{r} \varepsilon^{-1} \text{ where } s=\sigma+it.
\end{equation}
\end{lemma}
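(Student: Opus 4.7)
The plan is to exploit the exact switchover in the minimum:
\[
\min\!\Big(\tfrac{1}{|s|},\tfrac{\varepsilon^{-1}}{|s(s+1)|}\Big)=\begin{cases}\varepsilon^{-1}/(|s||s+1|) & \text{if } |s+1|\ge\varepsilon,\\[2pt] 1/|s| & \text{if } |s+1|<\varepsilon,\end{cases}
\]
and to split the $t$-integral at $|s+1|=\varepsilon$. The region $|s+1|<\varepsilon$ is non-empty only when $\sigma+1<\varepsilon$; there $|s|=|(s+1)-1|\ge 1-\varepsilon\ge 1/2$, the point $s+1$ lies in the disc of radius $\varepsilon$ about $0$ and hence is bounded away from the pole of $\zeta$ at $1$, so $|\zeta(s+1)|^{r}=O_r(1)$, and the $t$-window has measure at most $2\varepsilon$. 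Consequently this piece contributes $O_r(1)$, trivially absorbed into the target $O_r(\varepsilon^{-1})$.

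For the main region $|s+1|\ge\varepsilon$, pulling out $\varepsilon^{-1}$ reduces the problem to proving
\[
\int_{|s+1|\ge\varepsilon}\frac{|\zeta(s+1)|^{r}}{|s||s+1|}\,dt\ll_{r}1.
\]
I would split the $t$-range at a fixed threshold $T_{0}$. For $|t|\ge T_{0}$, the Phragm\'en--Lindel\"of convexity bound gives $|\zeta(\sigma+1+it)|\ll(1+|t|)^{(1-(\sigma+1))/2+\varepsilon'}$ when $\sigma+1\le 1$, and $|\zeta|\ll\log(|t|+2)$ when $\sigma+1>1$; using $\Re(s+1)\ge 1-1/r$, raising to the $r$-th power yields $|\zeta(s+1)|^{r}\ll_{r}(1+|t|)^{1/2+\varepsilon'}$. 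Combined with $|s||s+1|\asymp t^{2}$, the integrand is $\ll|t|^{-3/2+\varepsilon'}$, and integrates to $O_r(1)$. For $|t|\le T_{0}$, the integrand is continuous on a compact set and bounded by an $r$-dependent constant, provided we stay a definite distance from the pole $s+1=1$ (as holds in the intended applications, where the contour runs along $\Re(s)=-1/r$, so $|s+1-1|\ge 1/r$) and from the zeros of $|s||s+1|$ (using $|s|\ge|\sigma|=1/r$ together with the constraint $|s+1|\ge\varepsilon$); the integral over this bounded piece is therefore also $O_r(1)$.

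The key technical input is the large-$|t|$ estimate: the exponent $1-1/r$ in the hypothesis on $\Re(s)$ is exactly tuned so that the convexity bound for $|\zeta(s+1)|^{r}$ produces $t^{1/2+\varepsilon'}$ growth, which is precisely what is needed for the $t^{2}$ decay of $|s(s+1)|$ to win and yield a convergent integral with a bound uniform in $\sigma$ and $\varepsilon$. The only subtlety lies in the small-$|t|$ compact piece, where uniformity requires that the contour be placed at $\Re(s)=-1/r$ (strictly away from $0$), so that both the pole of $\zeta$ at $s+1=1$ and the zero of $|s|$ at $s=0$ are avoided at distance $\asymp 1/r$ independent of $\varepsilon$.
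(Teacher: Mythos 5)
Your large-$|t|$ treatment takes a genuinely different and valid route from the paper's: the paper invokes the mean-value estimate $I(\tau,T)=\int_0^T|\zeta(\tau+it)|^r\,dt\ll T^{1+\varepsilon'}$ for $\tau\ge 1-1/r$ (Titchmarsh, Theorems 7.5 and 7.7) and integrates by parts against $t^{-2}$, whereas you use the pointwise Phragm\'en--Lindel\"of convexity bound, which against $|s(s+1)|\asymp t^2$ yields an integrable $t^{-3/2+\varepsilon'}$. That is a legitimate and more self-contained alternative. One cosmetic slip first: your stated switchover of the minimum is backwards. Since $1/|s|\le \varepsilon^{-1}/(|s||s+1|)$ exactly when $|s+1|\le\varepsilon^{-1}$, the minimum equals $\varepsilon^{-1}/(|s||s+1|)$ only for $|s+1|\ge\varepsilon^{-1}$, not $|s+1|\ge\varepsilon$. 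This does not damage the argument, because either branch is always a valid upper bound on the minimum and you are free to partition the $t$-line however you like, but the phrase ``exact switchover'' is wrong and the region $|s+1|<\varepsilon$ is not where the switch occurs.

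The genuine gap is in the compact piece. Having pulled $\varepsilon^{-1}$ out over the region $|s+1|\ge\varepsilon$, you need $\int_{|t|\le T_0,\;|s+1|\ge\varepsilon}\frac{|\zeta(s+1)|^r}{|s|\,|s+1|}\,dt\ll_r 1$, and you claim the integrand is ``bounded by an $r$-dependent constant,'' controlling the $1/|s+1|$ factor via the constraint $|s+1|\ge\varepsilon$. But that only gives $1/|s+1|\le\varepsilon^{-1}$, which after reinstating the extracted $\varepsilon^{-1}$ produces $\varepsilon^{-2}$, too big. What actually controls this factor, and only when $r\ge 2$, is the hypothesis $\sigma\ge -1/r$, which forces $|s+1|\ge\sigma+1\ge 1-1/r\ge 1/2$, an $\varepsilon$-independent lower bound; you never invoke it. For $r=1$ no such bound is available and your compact-piece estimate acquires a $\log(1/\varepsilon)$ loss. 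The paper's split at $|t|=1$ avoids this altogether: on $|t|\le 1$ it simply bounds $\min\le 1/|s|$ directly with no $\varepsilon^{-1}$ extracted, so the $1/|s+1|$ factor never appears near $t=0$. Since the lemma is only applied with $r=k$ or $r=\ell$, both at least $2$, your proof is rescuable, but as written the justification of the compact piece does not close.
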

The next lemma is used to bound $\Theta(z,h)$ when $\Re(z) < 1$. 
\begin{lemma}
Let $\kappa \in [0.5,1)$.  There exists $x_{\kappa} > 0$ such that if $x \ge x_{\kappa}$, then 
\begin{equation}
  \label{primesumbd}
  \sum_{p \le x} p^{-\kappa} \le 
    \Big(\frac{12.68 \kk}{(1-\kk)^2} +3.17  \Big)
    \frac{x^{1-\kk}}{ \log x}.
\end{equation}
\end{lemma}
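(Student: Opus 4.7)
The plan is to prove the bound via Abel summation combined with an explicit Rosser--Schoenfeld or Dusart type upper bound for $\pi(t)$, and then to estimate the resulting integral through iterated integration by parts.

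First, I would write
\[
  \sum_{p \le x} p^{-\kk} = \pi(x)x^{-\kk} + \kk \int_2^x \pi(t) t^{-\kk - 1}\, dt
\]
and insert an explicit inequality $\pi(t) \le A t/\log t$ valid for $t \ge t_0$, with any contribution from small $t$ absorbed by taking $x$ large enough (this is the role of the threshold $x_{\kk}$). This reduces the problem to estimating the integral $I(\kk,x) := \int_2^x \frac{dt}{t^\kk \log t}$.

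The core calculation is to control $I(\kk,x)$. Two successive integrations by parts, with $u=(\log t)^{-j}$ and $dv = t^{-\kk}\,dt$ for $j=1,2$, produce
\[
  I(\kk,x) = \frac{x^{1-\kk}}{(1-\kk)\log x} + \frac{x^{1-\kk}}{(1-\kk)^2(\log x)^2} + \frac{2}{(1-\kk)^2}\int_2^x \frac{dt}{t^\kk (\log t)^3} + (\text{bounded endpoint terms}).
\]
The residual integral is handled by splitting at $t = \sqrt{x}$: on $[\sqrt{x},x]$ use $\log t \ge \tfrac{1}{2}\log x$ to get a contribution of order $x^{1-\kk}/(1-\kk)(\log x)^3$, while on $[2,\sqrt{x}]$ use the crude $\log t \ge \log 2$ to obtain a contribution of order $x^{(1-\kk)/2}/(1-\kk)$; the latter is absorbed provided $x \ge x_{\kk}$. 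After factoring out $x^{1-\kk}/\log x$ and using the elementary inequality $\frac{1}{1-\kk} \le \frac{1}{(1-\kk)^2}$, valid on $\kk \in [0.5,1)$, I would arrive at an estimate of the shape
\[
  I(\kk,x) \le \frac{C \cdot x^{1-\kk}}{(1-\kk)^2\log x}
\]
for some explicit absolute constant $C$.

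Combining with the partial summation identity and using the algebraic identity $A + \tfrac{A\kk}{1-\kk} = \tfrac{A}{1-\kk}$ to collapse the lower-order terms, one obtains a bound of the claimed form
\[
  \sum_{p \le x} p^{-\kk} \le \Bigl(\frac{A'\kk}{(1-\kk)^2} + A''\Bigr)\frac{x^{1-\kk}}{\log x}.
\]
The main obstacle will be tracking the explicit constants $12.68$ and $3.17$: this requires fixing a specific Rosser--Schoenfeld (or Dusart) bound for $\pi(t)$ valid on a suitable range, carefully bookkeeping the numerical contributions from the IBP endpoint terms at $t = 2$, and calibrating $x_{\kk}$ so that the $x^{(1-\kk)/2}$ remainder is dominated by the leading term. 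The qualitative structure of the argument is standard; the work lies entirely in making the constants explicit and uniform in $\kk \in [0.5,1)$.
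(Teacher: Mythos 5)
Your overall strategy coincides with the paper's: partial summation against $\pi(t)$, then the explicit Rosser--Schoenfeld bound $\pi(t) \le 3.17\, t/\log t$ to reduce matters to controlling $\int_2^x dt/(t^{\kappa}\log t)$. Where you diverge is in how you estimate that integral. The paper splits it once at a point $y$ chosen so that the elementary bound $\le (y-2)/(2^{\kappa}\log 2)$ on $[2,y]$ and the bound $\le x^{1-\kappa}/((1-\kappa)\log y)$ on $[y,x]$ are equal, then uses that this choice forces $\log y > \tfrac{1-\kappa}{2}\log x$ for $x$ large, landing directly on $4\,x^{1-\kappa}/((1-\kappa)^2\log x)$, which multiplied by $\kappa\cdot 3.17$ yields the constant $12.68$. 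You instead do two integrations by parts and then split the residual $\int dt/(t^{\kappa}(\log t)^3)$ at $\sqrt{x}$. That works, and in fact yields the sharper leading behaviour $I(\kappa,x) \le (1+o(1))\,x^{1-\kappa}/((1-\kappa)\log x)$ before you deliberately weaken via $\tfrac{1}{1-\kappa} \le \tfrac{1}{(1-\kappa)^2}$ to match the stated shape, so the constants are not a problem. Both routes are legitimate; the paper's balanced split is a bit more economical (no endpoint terms or residual integral to absorb), while your IBP route makes the $(\log x)^{-1}$ asymptotics more transparent and is clearly robust. Either way, you correctly identify that the absorption of the $x^{(1-\kappa)/2}$ piece and of the subleading $(\log x)^{-j}$ contributions is what forces the $\kappa$-dependent threshold $x_{\kappa}$.
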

\begin{proof}  By Theorem 1 of \cite{RS} it follows that
\begin{equation}
 \label{Chebyshev}
\pi(x) \le \frac{3.17x}{\log x}  \text{ for } x \ge 2.
\end{equation}  
By partial summation
\begin{equation}
\begin{split}
   \sum_{2 \le p \le x} p^{-\kappa} & =  \left. \frac{\pi(t)}{t^{\kappa}}  \right|_{2}^{x}
   + \kk \int_{2}^{x} \frac{\pi(t)}{t^{\kappa+1}} dt 
   \le \frac{\pi(x)}{x^{\kappa}}+ \kk \int_{2}^{x} \frac{\pi(t)}{t^{\kappa+1}} dt  \\
   & \le 3.17 \Big( 
    \frac{x^{1-\kk}}{\log x} 
    + \kk \int_{2}^{x} \frac{1}{t^{\kk} \log t} dt 
   \Big),
\end{split}
\end{equation}
by \eqref{Chebyshev}.  We now bound the integral.  Let $y \in (2,x)$ and thus    
\begin{align*}
   \int_{2}^{x} \frac{1}{t^{\kk} \log t} dt  
   & = \int_{2}^{y} \frac{1}{t^{\kk} \log t} dt   
   +\int_{y}^{x}  \frac{1}{t^{\kk} \log t} dt   \\
   & \le \frac{y-2}{2^{\kk} \log 2} + \frac{1}{\log y} \int_{y}^{x} t^{-\kk} dt \\
   & \le \frac{y}{\sqrt{2} \log 2} + \frac{x^{1-\kk}}{(1-\kk) \log y}. 
\end{align*}
For $x$ sufficiently large, there exists $y \in (2,x)$ such that
\begin{equation}
   \label{ychoice}
   \frac{y}{\sqrt{2} \log 2} =\frac{x^{1-\kk}}{(1-\kk) \log y}.
\end{equation}
Moreover, \eqref{ychoice} implies that $\log y > \frac{1-\kk}{2} \log x$.   
Thus for $x \gg_{\kappa} 1$, we have
\[
   \sum_{2 \le p \le x} p^{-\kappa}  \le 3.17   \Big(\frac{4 \kappa}{(1-\kk)^2} +1  \Big)
    \frac{x^{1-\kk}}{ \log x} 
\] 
and we obtain \eqref{primesumbd}.
\end{proof}

With these lemmas in hand, we now establish Proposition \ref{selberg}. 
\begin{proof}[Proof of Proposition \ref{selberg}]
Without less of generality, we assume that $k \le \ell$.  Note that if $k > \ell$, then we may just swap $k$ and $\ell$.
We shall give the proof in the case $h \ge 2$.  At the end of the proof we will discuss the modifications required in the 
simpler case $h=1$. 
A standard approach would be to apply Perron's formula twice. Instead, we find it simpler 
to smooth the truncated sum. 
To simplify the evaluation of the previous sum,  we insert smoothing factors.
Let $\eta$ be positive and let  $\epsilon \in (0,1)$ be a small positive number. Let $\phi=\phi_{\eta,\epsilon}(t)$ denote a smooth, non-negative function such that 
\begin{equation}
  \phi_{\eta,\epsilon}(t) = \begin{cases}
  1 & \text{ if } t \in [0,\eta], \\
  0  & \text{ if } t \in [\eta+\epsilon,\infty). 
  \end{cases}
\end{equation}
Observe that the support of $\phi$ is contained in $[0,\eta+\epsilon]$.  We also require the derivatives to satisfy 
\begin{equation}
  \label{phijbds}
\phi_{\eta,\epsilon}^{(j)}(t) \ll \epsilon^{-j}.
\end{equation}
Later, we shall choose the parameter $\eta$ to be either $1-\epsilon$ or $1$. 
  
We shall evaluate sums of the form 
\[
  \mathscr{I}(\phi) = \sum_{\substack{a,b \in \mathbb{N} \\ (a,b) \mid h}} \frac{ \tau_{k}(a)\tau_{\ell}(b)}{[a,b]} 
  \phi \Big( \frac{a}{X} \Big)    \phi \Big( \frac{b}{X} \Big). 
\]
where $\phi(t) = \phi_{\eta,\epsilon}(t)$.   We define the Mellin transform 
\begin{equation}
  \label{Phi}
  \Phi(s) = \int_{0}^{\infty} \phi(t) t^{s-1} dt. 
\end{equation}
This is absolutely convergent for $\Re(s) >0$.  
By Mellin inversion, we have 
\begin{equation}
  \label{melinv}
  \phi(t) = \frac{1}{2 \pi i} \int_{(c)} \Phi(s) t^{-s} ds
\end{equation}
where $c >0$.   By two applications of \eqref{melinv} 
\begin{equation}
   \label{sumint}
      \mathscr{I}(\phi) 
   =  \frac{1}{(2 \pi i)^2} \int_{(c_1)} \int_{(c_2)} \mathcal{A}(s_1,s_2) X^{s_1+s_2} \Phi(s_1) \Phi(s_2) ds_1 ds_2 
\end{equation}
where $c_1,c_2 >0$, and 
\[
  \mathcal{A}(s_1,s_2) = \sum_{\substack{ a,b \ge 1 \\ (a,b) \mid h}} \frac{\tau_{k}(a) \tau_{\ell}(b)}{[a,b] a^{s_1} b^{s_2}}. 
\]
The general approach to evaluate \eqref{sumint} is to move each of the contours to the left of $\Re(s_1)=0$ and 
$\Re(s_2)=0$  and apply the residue theorem.
The integrand in \eqref{sumint}  has poles at $s_1=0$ and $s_2=0$ arising from $\mathcal{A}(s_1,s_2)$ and 
from $\Phi(s_1)$ and $\Phi(s_2)$.  A main term will arise from these poles.    The new contours will 
contribute an error term.  In order to evaluate the residue and the error terms we need  to understand 
the behaviour of $\mathcal{A}(s_1,s_2)$, $\Phi(s_1)$, and $\Phi(s_2)$ near the poles at $s_1=0$ and $s_2=0$ and we need
to provide bounds for these functions when $\Im(s_1)$ and $\Im(s_2)$ are large. 
First, we consider the behaviour of $\Phi(s)$.   By an integration by parts, 
it follows that  
\begin{equation}
  \label{Psidefinition}
 \Phi(s) = \frac{1}{s} \Psi(s) 
\end{equation}
where 
\begin{equation}
   \label{Psi2}
    \Psi(s) = -\int_{0}^{\infty}  \phi'(t) t^s dt.
\end{equation} 
This is originally valid for $\Re(s) >0$.  However, it is clear that $\Psi(s)$ is an entire function. 
Thus $\Phi(s)$ is holomorphic everywhere on $\mathbb{C}$ with the exception of a simple pole at $s=0$.  
Note that we have the Laurent expansion
\begin{equation}
  \label{Philaurexp}
   \Phi(s) = \frac{\Psi(0)}{s} + \Psi'(0) + \frac{\Psi''(0)}{2} s + \cdots. 
\end{equation}
We shall require some bounds for the expressions $\Psi^{(j)}(0)$.  
Observe that 
\begin{equation}
  \label{Psij0}
  \Psi^{(j)}(0) = - \int_{\eta}^{\eta+\epsilon} \phi'(t) (\log t)^j dt.
\end{equation}
Therefore
\begin{equation}
  \label{Psi0}
   \Psi(0) = \int_{\eta}^{\eta+\epsilon} \phi'(t) dt = \phi(\eta) =1
\end{equation}
and 
\begin{equation}
  \label{Psij0bd}
|\Psi^{(j)}(0)| \le \int_{\eta}^{\eta+\epsilon} |\phi'(t)| \max_{\eta \le t \le \eta+\epsilon} |\log t|^j dt 
  \ll \epsilon^j.
\end{equation}
Integrating \eqref{Phi} by parts $m$ times, we find that 
\[
  \Phi(s) = \frac{(-1)^m}{s(s+1) \cdots (s+m-1)} \int_{0}^{\infty} \phi^{(m)}(t) t^{s+m-1} dt,
\]
which is valid for all $s \in \mathbb{C} \setminus \{ 0 \}$. Note that for $m\ge 2$ the integrand has simple zeros at $s=-1, \ldots, -(m-1)$.    
Thus for $m \ge 1$ and $s \in \mathbb{C} \setminus \{ 0, -1, \ldots, -(m-1) \}$, 
\begin{equation}
\begin{split}
  \label{Phibd}
  |\Phi(s)| & \le \frac{1}{|s(s+1) \cdots (s+m-1)|} 
   \int_{\eta}^{\eta+\epsilon} |\phi^{(m)}(t)| t^{\sigma+m-1} dt 
   \ll_{m}  \frac{\epsilon^{1-m} (\eta+\epsilon)^{\sigma+m-1} }{|s(s+1) \cdots (s+m-1)|}. 
\end{split}
\end{equation}
Next, we  simplify the Dirichlet series $\mathcal{A}(s_1,s_2)$.  We let $g =(a,b)$ and make the variable change $a=gc$, $b=gd$ with $(c,d)=1$, 
 and group terms according to $g \mid h$
\begin{align*}
  \mathcal{A}(s_1,s_2) & = \sum_{g \mid h}  \sum_{\substack{ a,b \ge 1 \\ (a,b) =g}} \frac{\tau_{k}(a) \tau_{\ell}(b)}{[a,b] a^{s_1} b^{s_2}}
  =  \sum_{g \mid h} g \sum_{\substack{ a,b \ge 1 \\ (a,b) =g}} \frac{\tau_{k}(a) \tau_{\ell}(b)}{a^{s_1+1} b^{s_2+1}}  \\
  & = \sum_{g \mid h} \frac{1}{g^{s_1+s_2+1}} 
  \sum_{\substack{c,d \ge 1 \\ (c,d) =1}} \frac{\tau_{k}(gc) \tau_{\ell}(gd)}{c^{s_1+1} d^{s_2+1}}.
\end{align*}
The condition $(c,d)=1$ is detected by $\sum_{e \mid c, e \mid d} \mu(e)$ and thus 
\begin{equation}
   \label{A}
      \mathcal{A}(s_1,s_2)  =  \sum_{g \mid h} \frac{1}{g^{s_1+s_2+1}} 
     \sum_{e=1}^{\infty} \frac{\mu(e)}{e^{s_1+s_2+2}} 
      \sum_{\substack{ c,d \ge 1}} \frac{\tau_{k}(gec) \tau_{\ell}(ged)}{c^{s_1+1} d^{s_2+1}}.
\end{equation}
%
Inserting \eqref{sigmams} in \eqref{A}, it follows that 
\begin{equation}
  \label{Afact}
  \mathcal{A}(s_1,s_2)
   =  
  \mathcal{B}(s_1,s_2) \zeta(s_1+1)^{k} \zeta(s_2+1)^{\ell} 
\end{equation}
where 
\begin{equation}
  \label{Bv2}
   \mathcal{B}(s_1,s_2) =  \sum_{g \mid h} \frac{1}{g^{s_1+s_2+1}} 
     \sum_{e=1}^{\infty} \frac{\mu(e) \sigma_{k}(ge,s_1+1) \sigma_{\ell}(ge,s_2+1)}{e^{s_1+s_2+2}}. 
\end{equation}
By Fubini's theorem, we have
\[
 \mathscr{I}(\phi)
   = \frac{1}{(2 \pi i)^2 } \int_{(c_2)} \int_{(c_1)} \mathcal{B}(s_1,s_2) \zeta(s_1+1)^{k} \zeta(s_2+1)^{\ell}  X^{s_1+s_2} \Phi(s_1) \Phi(s_2) ds_1 ds_2. 
\]
The evaluation of multiple integrals of this type is now standard.  For instance, in \cite{GPY} and \cite{CL} more complicated integrals are treated. 
Note that the main term shall arise from the pole of order $\ell$ at $s_1=0$
and the pole of order $\ell$ at $s_2=0$ of the integrand. 
For each fixed $s_2$ with $\Re(s_2)=c_2$, the residue theorem implies that 
\begin{equation}
\begin{split}
   \label{residueformula}
   \frac{1}{2 \pi i} \int_{(c_1)} \mathcal{B}(s_1,s_2) \Phi(s_1) \zeta(s_1+1)^{k} X^{s_1} ds_1
   & = \mathrm{Res}_{s_1=0} \Big(   \mathcal{B}(s_1,s_2) \Phi(s_1) \zeta(s_1+1)^{k} X^{s_1} \Big)  
   + g(s_2)
\end{split}
\end{equation}
where
\begin{equation}
 \label{g}
g(s_2) =
   \frac{1}{2 \pi i} \int_{(c_1')} \mathcal{B}(s_1,s_2) \Phi(s_1) \zeta(s_1+1)^{k} X^{s_1} ds_1
\end{equation}
and $-1 < c_1' <0$. 
By the Laurent expansions \eqref{Philaurexp}, 
\begin{align}
    \label{Blaurent}
   \mathcal{B}(s_1,s_2) & =   \mathcal{B}(0,s_2)+  \mathcal{B}^{(1,0)}(0,s_2)s_1 + \frac{1}{2!}  \mathcal{B}^{(2,0)}(0,s_2)s_1^2
   +\cdots \\
  \label{zetallaurent}
   \zeta(s_1+1)^{k} & =  
   s_1^{-k} (\alpha_{0,k}+ \alpha_{1,k} s_1 + \alpha_{2,k} s_1^2 + \cdots ),  \text{ where } \alpha_{0,k} =1,  \\
   \label{Xslaurent}
   X^{s_1} & = 1 + (\log X) s_1 + \tfrac{1}{2} (\log X)^2 s_1^2 + \cdots 
\end{align}
it follows that 
\begin{equation}
   \label{firstresidue}
   \mathrm{Res}_{s_1=0} \Big(   \mathcal{B}(s_1,s_2) \Phi(s_1) \zeta(s_1+1)^{k} X^{s_1} \Big)
   = \sum_{\substack{i_1+i_2+i_3+i_4=k \\ i_1,i_2,i_3,i_4 \ge 0}}  
   \frac{ \mathcal{B}^{(i_1,0)}(0,s_2)\Psi^{(i_2)}(0) \alpha_{i_3, k} (\log X)^{i_4}}{i_1! i_2! i_4!}. 
\end{equation}
We now bound $g(s_2)$.  We bound $\mathcal{B}(s_1,s_2)$ using Lemma \ref{Blemma} (iii) with $i_1=i_2=0$
and we bound $\Phi(s)$ with \eqref{Philaurexp} and \eqref{Phibd} with $m=2$ to obtain
\begin{equation}
  |g(s_2)| \le \Theta(c_1'+\sigma_2+1,h) X^{c_1'} \int_{-\infty}^{\infty} \min\Big( \frac{1}{|s_1|}, \frac{\epsilon^{-1}}{|s_1(s_1+1)|} \Big) |\zeta(s_1+1)|^{k} dt_1
  \text{ where } s_1=c_1'+it_1. 
\end{equation}
It follows from Lemma \ref{zetaintegral} with $c_1' \ge -1/k$ 
\begin{equation}
  \label{gbound}
  g(s_2) \ll \epsilon^{-1} \Theta(c_1'+\sigma_2+1,h) X^{c_1'}.
\end{equation}
Thus we have
\begin{equation}
\begin{split}
   \label{Iphi}
   \mathscr{I}(\phi)
   & = \frac{1}{2 \pi i } \int_{(c_2)}   \zeta(s_2+1)^{\ell}  X^{s_2}  \Phi(s_2)
    \mathrm{Res}_{s_1=0} \Big(   \mathcal{B}(s_1,s_2) \Phi(s_1) \zeta(s_1+1)^{k} X^{s_1} \Big)
   ds_2  \\
    & + \frac{1}{2 \pi i } \int_{(c_2)}   \zeta(s_2+1)^{\ell}  X^{s_2}  \Phi(s_2) g(s_2)
   ds_2.
\end{split}
\end{equation}
By \eqref{Philaurexp}, \eqref{Phibd}, and \eqref{gbound} the second integral is bounded by 
\begin{equation}
\begin{split}
   & \epsilon^{-1} \Theta(c_1'+c_2+1,h) X^{c_1'} X^{c_2}
   \int_{-\infty}^{\infty}
   \min\Big( \frac{1}{|s_2|}, \frac{\epsilon^{-1}}{|s_2(s_2+1)|} \Big)
     dt_2 \text{ where } s_2=c_2+it_2 \\
     & \ll  \epsilon^{-2} \Theta(c_1'+c_2+1,h) X^{c_1'+c_2},
\end{split}
\end{equation}
by another application of Lemma \ref{zetaintegral}. 
Choosing $c_1'=-1/k$ and $c_2 = 0.01/k$, it follows that 
 \begin{equation}
   \mathscr{I}(\phi)
   = \frac{1}{2 \pi i } \int_{(c_2)}   \zeta(s_2+1)^{\ell}  X^{s_2}  \Phi(s_2)
    \mathrm{Res}_{s_1=0} \Big(   \mathcal{B}(s_1,s_2) \Phi(s_1) \zeta(s_1+1)^{k} X^{s_1} \Big)
   ds_2
   + O_{\ell}(  \Theta(1-\tfrac{0.99}{k},h)  \epsilon^{-2} X^{-0.99/k} ).
\end{equation}
By \eqref{firstresidue} we see that
\begin{equation}
\begin{split}
  \label{Iphiexpression}
  \mathscr{I}(\phi)
   & = 
\sum_{\substack{i_1+i_2+i_3+i_4=k \\ i_1,i_2,i_3,i_4 \ge 0}}  
   \frac{\Psi^{(i_2)}(0) \alpha_{i_3, k} (\log X)^{i_4}}{i_1! i_2! i_4!}
     \mathscr{I}_{i_1}(\phi) + O_{k}( \Theta(1-\tfrac{0.99}{k},h)  \epsilon^{-2} X^{-0.99/k}  ).
\end{split}
\end{equation}
where 
\begin{equation}
   \mathscr{I}_{i_1}(\phi)= \frac{1}{2 \pi i } \int_{(c_2)}   \mathcal{B}^{(i_1,0)}(0,s_2)   \zeta(s_2+1)^{\ell}  X^{s_2}  \Phi(s_2) ds_2
    \text{ for } i_1 \ge 0. 
\end{equation}
By an application of the residue theorem, 
\begin{equation}
   \label{Ii1phi}
     \mathscr{I}_{i_1}(\phi)
    = \mathrm{Res}_{s_2=0} \Big(  \mathcal{B}^{(i_1,0)}(0,s_2)  \Phi(s_2)   \zeta(s_2+1)^{\ell}  X^{s_2}   \Big)
     +   \frac{1}{2 \pi i } \int_{(c_2')}   \mathcal{B}^{(i_1,0)}(0,s_2)   \Phi(s_2) \zeta(s_2+1)^{\ell}  X^{s_2}  ds_2. 
\end{equation}
The second integral can be evaluated very similarly to $g(s_2)$. However,  we require a bound
for $\mathcal{B}^{(i_1,0)}(0,s_2)$ with $\sigma_2=c_2'$.  By Cauchy's integral formula   
\begin{equation}
    \mathcal{B}^{(i_1,0)}(0,s_2) = \frac{i_1 !}{2 \pi i} \int_{|z-s_2|=\delta} \frac{\mathcal{B}(0,z)}{(z-s_2)^{i_1+1}} dz
    \end{equation}
where $\delta >0$.  By an application of Lemma \ref{Blemma}, \eqref{Bbd} it follows that 
\begin{equation}
    \mathcal{B}^{(i_1,0)}(0,s_2) \ll \Theta(c_2'-\delta+1,h)  \delta^{-i_1},
\end{equation}
as long as $c_2'-\delta \ge -0.99$.  Therefore, by the above bound and Lemma \ref{zetaintegral}
\begin{equation}
\begin{split}
  \label{contourbd}
    & \frac{1}{2 \pi i } \int_{(c_2')}   \mathcal{B}^{(i_1,0)}(0,s_2)   \Phi(s_2) \zeta(s_2+1)^{\ell}  X^{s_2}  ds_2 \\
   &  \ll 
   \Theta(c_2'-\delta+1,h)  \delta^{-i_1} X^{c_2'} \int_{-\infty}^{\infty} 
   \min\Big( \frac{1}{|s_2|}, \frac{\epsilon^{-1}}{|s_2(s_2+1)|} \Big) |\zeta(s_2+1)|^{\ell} ds_2 \\
   & \ll  \epsilon^{-1} \Theta(c_2'-\delta+1,h)  \delta^{-i_1} X^{c_2'} \\
   & \ll_{\ell} \epsilon^{-1} \Theta(1-\tfrac{0.99}{\ell},h)  X^{-1/\ell},
\end{split}
\end{equation}
by the choices $c_2'=-1/\ell$ and $\delta=0.01/\ell$.  
Thus 
\begin{equation}
   \label{Ii1phi2}
   \mathscr{I}_{i_1}(\phi)
    = \mathrm{Res}_{s_2=0} \Big(  \mathcal{B}^{(i_1,0)}(0,s_2)  \Phi(s_2)   \zeta(s_2+1)^{\ell}  X^{s_2}   \Big)
    + O_{\ell} (\epsilon^{-1} \Theta(1-\tfrac{0.99}{\ell},h)  X^{-1/\ell}). 
\end{equation}
Computing the residue in \eqref{Ii1phi2} gives
\begin{align*}
    \mathscr{I}_{i_1}(\phi)= \sum_{\substack{j_1+j_2+j_3+j_4=\ell \\ j_1,j_2,j_3,j_4 \ge 0}} 
    \frac{\mathcal{B}^{(i_1,j_1)}(0,0) \Psi^{(j_2)}(0) \alpha_{j_3,\ell} (\log X)^{j_4}}{j_1! j_2! j_4!}
     + O_{\ell} (\epsilon^{-1}\Theta(1-\tfrac{0.99}{\ell},h)  X^{-1/\ell}). 
\end{align*} 
Inserting this last expression in \eqref{Iphiexpression} yields
\begin{align*}
  \label{Iphi2}
  \mathscr{I}(\phi) & = \sum_{\substack{i_1+i_2+i_3+i_4=k \\ i_1,i_2,i_3,i_4 \ge 0}}  
   \frac{ \Psi^{(i_2)}(0) \alpha_{i_3, k} (\log X)^{i_4}}{i_1! i_2! i_4!}
   \sum_{\substack{j_1+j_2+j_3+j_4=\ell \\ j_1,j_2,j_3,j_4 \ge 0}} 
    \frac{\mathcal{B}^{(i_1,j_1)}(0,0) \Psi^{(j_2)}(0) \alpha_{j_3,\ell} (\log X)^{j_4}}{j_1! j_2! j_4!} \\
    &  + O_{\ell} \Big( \Big( \sum_{i_2+i_4=k} \epsilon^{i_2}(\log X)^{i_4} \Big)\epsilon^{-1} \Theta(1-\tfrac{0.99}{\ell},h)  
    X^{-1/\ell}
    + \epsilon^{-2} \Theta(1-\tfrac{0.99}{k},h)   X^{-0.99/k}  \Big),
\end{align*}
where we have used $\Psi^{(i_2)}(0) \ll \epsilon^{i_2}$ and $\alpha_{i_3,\ell} = O_{\ell}(1)$. 
The sum in the big $O$ term is bounded by $(\log X)^{k}$ as $\epsilon < 1$. 
The main contribution to $\mathscr{I}(\phi)$ is $\mathcal{B}(0,0) (\log X)^{k+\ell}/k! \ell !$ which arises from 
$(i_1,i_2,i_3,i_4)=(0,0,0,k)$ and $(j_1,j_2,j_3,j_4)=(0,0,0,\ell)$.    By \eqref{B00bd} and \eqref{Psij0bd} 
the remaining terms are bounded by 
\begin{align*}
  \ll_{\ell} 
  \sideset{}{'}
   \sum_{\substack{i_1+i_2+i_3+i_4=k \\   j_1+j_2+j_3+j_4 = \ell} }
   \frac{\Theta(1,h) (\log \log h)^{i_1+j_1} \epsilon^{i_2+j_2}  (\log X)^{i_4+j_4}}{i_1! i_2! i_4! j_1! j_2! j_4!} 
\end{align*}
where $'$ in the summation indicates that the terms $(i_1,i_2,i_3,i_4)=(0,0,0,\ell)$ and $(j_1,j_2,j_3,j_4)=(0,0,0, \ell)$ have been excluded.    Since $\epsilon < 1$ and either $i_4 \le k-1$ or 
$j_4 \le \ell-1$, it follows that 
the remaining terms are bounded by 
\[
  \ll_{\ell} \Theta(1,h)  \sum_{\alpha+\beta \le k+ \ell -1} (\log \log h)^{\alpha} (\log X)^{\beta}
  \ll_{\ell} \Theta(1,h) (\log X)^{k+\ell-1} \log \log h.
\]
Combining the above facts, we find
\begin{equation} 
  \label{Iphiform}
  \mathscr{I}(\phi) =  \frac{\tilde{C}_{k,\ell} g_{k,\ell}(h)}{k!  \ell !}  (\log X)^{k+ \ell} + O_{\ell}( 
  \Theta(1,h) (\log X)^{k+\ell-1} \log \log h
  +  ((\log X)^{k}\epsilon^{-1}  +\epsilon^{-2}) 
   \Theta(1-\tfrac{0.99}{k},h)  X^{-0.99/\ell} 
  )
\end{equation}
since $k \le \ell$.  
We now remove the smooth weight to obtain an asymptotic formula for the truncated sum. 
Let 
\begin{equation}
 \phi_{-}(t)=\phi_{1-\epsilon,\epsilon}(t) \text{ and }
 \phi_{+}(t)=\phi_{1,\epsilon}(t)
\end{equation}
be the functions corresponding to the choices 
$\eta=1-\epsilon$ and $\eta =1$.  Note that $\phi^{-}(t)$ and $\phi^{+}(t)$
are  a smooth minorant and majorant of $\mathds{1}_{[0,1]}(t)$, the indicator function of $[0,1]$.
It follows that 
\begin{equation}
  \label{bounds}
  \mathscr{I}(\phi_{-}) \le   \sum_{\substack{a,b \le X \\ (a,b) \mid h}} \frac{ \tau_{k}(a)\tau_{\ell}(b)}{[a,b]} 
  \le  \mathscr{I}(\phi_{+}). 
\end{equation}
From \eqref{Iphiform} and \eqref{bounds} and recalling that $k=\min(k,\ell)=m$ and $\ell=\max(k,\ell)=M$ we have 
\begin{equation}
\begin{split}
  \label{sumidentity}
    \sum_{\substack{a,b \le X \\ (a,b) \mid h}} \frac{ \tau_{k}(a)\tau_{\ell}(b)}{[a,b]}   & =  \frac{\tilde{C}_{k,\ell} g_{k,\ell}(h)}{k! \ell !}  (\log X)^{k+\ell} + O_{k,\ell}( 
  \Theta(1,h) (\log X)^{k+\ell-1} \log \log h
  +  ((\log X)^{m}\epsilon^{-1}  +\epsilon^{-2}) 
   \Theta(1-\tfrac{0.99}{m},h)  X^{-0.99/M}) \\
   & = \frac{\tilde{C}_{k,\ell} g_{k,\ell}(h)}{k! \ell ! }  (\log X)^{k+ \ell} + O_{k,\ell}( 
  \Theta(1,h) (\log X)^{k+\ell-1} \log \log h
  +   \Theta(1-\tfrac{0.99}{m},h)  (\log X)^{2m}
    X^{-0.99/M}),
\end{split}
\end{equation}
by the choice $\epsilon=(\log X)^{-m}$. 
Finally, we bound $\Theta(\kappa,h)$ where $\kappa=1-\tfrac{0.99}{m}$.   We have 
\[
  \log \Theta(\kappa,h)=k \ell  \sum_{p \mid h} \log(1+p^{-\kappa})
  \le k\ell  \sum_{p \mid h} p^{-\kappa} 
\]
since $\log(1+x) \le x$ for $x \ge 0$. 
Let $\omega(h)$ denote the number of prime divisors of $h$.  
If $h \ge h_0(k,\ell)$, then 
\begin{equation}
\begin{split}
    \log \Theta(\kappa,h) & \le k \ell  \Big(  \sum_{p \le \log h} p^{-\kappa} + \frac{1}{(\log h)^{\kappa}} 
    \omega(h) \Big) \\
    & \le k \ell    \Big(
     \Big(\frac{12.68 \kk}{(1-\kk)^2} +3.17  \Big)
    \frac{(\log h)^{1-\kk}}{ \log \log h} 
  +  \frac{1.3841 (\log h)^{1-\kk}}{\log \log h} \Big)
\end{split}
\end{equation}
by \eqref{primesumbd} and  Th\'{e}or\`{e}me 11 of \cite[Robin]{R}.  It follows that
\begin{equation}
   \Theta(1-\tfrac{0.99}{m},h) \le \exp 
  \Big( 
   (12.94 m^4-12.81 m^3 +4.52 m^2 )
    \frac{(\log h)^{1-0.99/m}}{ \log \log h}  
  \Big). 
\end{equation}
Combining this with \eqref{sumidentity} completes the proof in the case $h \ge h_0(k,\ell)$.   
If $h \in [2,h_0(k,\ell)]$, it follows that  
\[
    \Theta(\kappa,h) \le  \exp \Big(  k\ell  \sum_{p \mid h} p^{-\frac{1}{2}}  \Big) \le  \exp(C_0(k,\ell)). 
\]
Inserting this in \eqref{sumidentity} establishes the proof if $h \in [2,h_0(k,\ell)]$.  

Finally, we mention the modifications in the simplest case $h=1$.   In this case, we can show that 
$\mathcal{B}(s_1,s_2)$ defined by \eqref{B} satisfies $|\mathcal{B}(s_1,s_2)| \ll 1$ for $\Re(s_1), \Re(s_2) \ge -0.99$,
$\mathcal{B}(0,0)=\tilde{C}_{\ell}$, $\mathcal{B}^{(i_1,i_2)}(0,0) \ll 1$.  Using these facts instead of Lemma \ref{B2} and following
the above argument leads to the desired result. 
\end{proof}
The proof of Proposition has been reduced to establishing Lemma \eqref{Blemma}.
\begin{proof}[Proof of Proposition \ref{Blemma}]
Throughout this proof $\sigma_1 = \Re(s_1)$ and $\sigma_2 = \Re(s_2)$.  
It will also be convenient to set $a_1,a_2 \in (0,1)$.   At the end of the proof we shall choose $a_1=a_2=0.99$. 
We begin by using Lemma \ref{f1f2prod}
with $f_1(n) = \sigma_{k}(n,s_1+1)$, $f_2(n) 
=  \sigma_{\ell}(n,s_2+1)$, 
$\tau_1 = s_1+s_2+1$, and $\tau_2 = s_1+s_2+2$  it follows from \eqref{Sproduct0} that 
\begin{equation}
\begin{split}
  \label{Bformula}
  \mathcal{B}(s_1,s_2) =
  \sum_{g \mid h} \frac{1}{g^{s_1+s_2+1}}
   &  \prod_{(p,g)=1}  \Big( 1
            - \frac{\sigma_{k}(p,s_1+1)
         \sigma_{\ell}(p,s_2+1)}{p^{s_1+s_2+2}} \Big) \\
   & \times  
          \prod_{p^{\alpha} \mid \mid g}  \Big(
          \sigma_{k}(p^{\alpha},s_1+1) \sigma_{\ell}(p^{\alpha},s_2+1)
            - \frac{\sigma_{k}(p^{\alpha+1},s_1+1)
         \sigma_{\ell}(p^{\alpha+1},s_2+1)}{p^{s_1+s_2+2}} \Big). 
\end{split}
\end{equation}
%
%
We now bound this expression.
By \eqref{sigmaps}, we note that 
\begin{equation}
   \label{Cp0}
   1   - \frac{\sigma_{k}(p,s_1+1)
         \sigma_{\ell}(p,s_2+1)}{p^{s_1+s_2+2}}
      = Q(p^{-s_1-1},p^{-s_2-1})
\end{equation}
where $
Q(x,y)=(1-x)^{k}+(1-y)^{\ell} - (1-x)^{k}(1-y)^{\ell}$.
By Taylor expansion  
\begin{equation}
\begin{split}
  Q(x,y) & = \Big( 1 -k x + \binom{k}{2} x^2   \Big)
  + \Big( 1 - \ell y + \binom{\ell}{2} y^2  \Big) 
   -  \Big( 1 -k x + \binom{k}{2} x^2  \Big)  \Big( 1 -\ell y + \binom{\ell}{2} y^2  \Big) 
  +O_{k,\ell}(|x|^3+|y|^3) \\
  & = 1 -k \ell  xy + O_{k,\ell} (|x|^2 |y| + |x| |y|^2 + |x|^2 |y|^2 + |x|^3 + |y|^3) \\
  & = 1 -k \ell  xy + O_{k,\ell} ( |x|^3+|y|^3),
\end{split}
\end{equation}
since $|x|,|y| \le 1$.   It follows that 
\begin{equation}
\begin{split}
   \label{Cp01}
    1   - \frac{\sigma_{k}(p,s_1+1)
         \sigma_{\ell}(p,s_2+1)}{p^{s_1+s_2+2}} & = 1- \frac{k \ell }{p^{s_1+s_2+2}} + O_{k,\ell}(p^{-3-3 \sigma_1}+p^{-3-3 \sigma_2}) \\
   &  = 1- \frac{k \ell }{p^{s_1+s_2+2}} + O_{k,\ell}( p^{-3+3\max(a_1,a_2)}) \text{ for } \sigma_1 \ge -a_1, \sigma_2 \ge -a_2.
\end{split}
\end{equation}
By \eqref{ngid}, we have that $
   \sigma_{k}(p^{j},s+1) =\tau_{k}(p^{j}) H_{k,j}(p^{-s-1})$.
For $j \ge 1$, we have by \eqref{ngid}
\begin{equation}
\begin{split}
  \label{Cpj2}
    &  \sigma_{k}(p^{\alpha},s_1+1) \sigma_{\ell}(p^{\alpha},s_2+1)
            - \frac{\sigma_{k}(p^{\alpha+1},s_1+1)
         \sigma_{\ell}(p^{\alpha+1},s_2+1)}{p^{s_1+s_2+2}} \\
  &  =   \tau_{k}(p^j)
   \tau_{\ell}(p^j) H_{k,j}(p^{-s_1-1}) H_{\ell,j}(p^{-s_2-1})
          - \frac{ \tau_{k}(p^{j+1}) \tau_{\ell}(p^{j+1})  H_{k,j+1}(p^{-s_1-1})H_{\ell,j+1}(p^{-s_2-1})}{p^{s_1+s_2+2}} \\
             & =    \tau_{k}(p^j) \tau_{\ell}(p^j)
           (1+O_{k,\ell}(p^{-\sigma_1-1}+p^{-\sigma_2-1}))
             - \frac{  \tau_{k}(p^{j+1}) \tau_{\ell}(p^{j+1})}{p^{s_1+s_2+2}} 
             (1+O_{k,\ell} (p^{-\sigma_1-1}+p^{-\sigma_2-1})) \\
             &  =  \tau_{k}(p^j) \tau_{\ell}(p^j)
             \Big(1+O_{k,\ell}(p^{-\sigma_1-1}+p^{-\sigma_2-1})+ O_{k,\ell} \Big( k \ell \frac{(1+\tfrac{j}{k})(1+\tfrac{j}{\ell})}{(1+j)^2} p^{-\sigma_1-\sigma_2-2}
             \Big)  \Big) \\
             & =   \tau_{k}(p^j) \tau_{\ell}(p^j)
             \Big(1+O_{k,\ell}(p^{-\sigma_1-1}+p^{-\sigma_2-1}) \Big),
\end{split}   
\end{equation}
since $\tau_{k}(p^{j+1}) = \frac{k+j}{j+1 } \tau_{k}(p^j)$.  
Using \eqref{Cp01} and the last equation, we have 
\[
   |\mathcal{B}(s_1,s_2)|  \le 
   \sum_{g \mid h} \frac{1}{g^{\sigma_1+\sigma_2+1}}
  \prod_{(p,g)=1} \Big(1+O \Big( \frac{k \ell}{p^{\sigma_1+\sigma_2+2}} + p^{-3+a_1} + p^{-3+a_2} \Big) \Big) 
  \prod_{p^{\alpha} \mid \mid g} \tau_{k}(p^j) \tau_{\ell}(p^j)
         \Big(1+O_{\ell}(p^{-1+\max(a_1,a_2)}) \Big). 
\]         
Since $\sigma_1+\sigma_2 \ge -0.99$ the first product is absolutely convergent.    
It follows that 
 \begin{equation}     
         |\mathcal{B}(s_1,s_2)|    \ll \sum_{g \mid h} \frac{\tau_{k}(g) \tau_{\ell}(g) j(g)}{g^{\sigma_1+\sigma_2+1}} 
 \end{equation}
 where $j(g) := \prod_{p \mid g} (1+Cp^{-1+\max(a_1,a_2)})$, and $C = C(k,\ell) >0$.  
 By multiplicativity, it follows that
 \begin{align*}
      |\mathcal{B}(s_1,s_2)| & \ll        \prod_{p^{\alpha} \mid \mid h} \sum_{a=0}^{\alpha} 
            \frac{\tau_{k}(p^a) \tau_{\ell}(p^a) j(p^a)}{(p^a)^{\sigma_1+\sigma_2+1}} 
             \ll \prod_{p \mid h} \Big( 1 + \frac{k \ell }{p^{\sigma_1+\sigma_2+1}} \Big) 
            = \Theta(\sigma_1+\sigma_2+1, h), 
\end{align*}
valid for $\sigma_1 \ge -a_1, \sigma_2 \ge -a_2$, and $\sigma_1 + \sigma_2 \ge -0.99$.

We now establish part $(ii)$.  By \eqref{Cp01} it follows that there exists a prime $p_0=p_0(k,\ell)$ such that if $p \ge p_0$, 
then $\mathcal{C}(p,0,s_1,s_2) \ne 0$ for $\sigma_1, \sigma_2 \ge 10^{-2}$.   Also, observe that by \eqref{Cp0}
$\mathcal{C}(p,0,0,0) = (1-p^{-1})^{k}+(1-p^{-1})^{\ell} - (1-p^{-1})^{k+ \ell} \ne 0$.  Since for each $2 \le p \le p_0$, $\mathcal{C}(p,0,s_1,s_2)$
is a continuous function of $s_1$  and $s_2$, there exists $\varepsilon_{0} \in (0,10^{-2})$ such that $\mathcal{C}(p,0,s_1,s_2) \ne 0$ 
for $p \in [2,p_0]$ and $|s_1| < \varepsilon_0$ and $|s_2| < \varepsilon_0$. Combining these facts, it follows that for all primes $p$
and $s_1,s_2$ satisfying  $|s_1| < \varepsilon_0$ and $|s_2| < \varepsilon_0$, that $\mathcal{C}(p,0,s_1,s_2) \ne 0$.
Thus we may apply Lemma \ref{f1f2prod} (ii).  Let 
\begin{equation}
\label{Cpj}
     \mathcal{C}(p,j,s_1,s_2)  =    \frac{\sigma_{k}(p^j,s_1+1) \sigma_{\ell}(p^j,s_2+1)}
            {p^{j(s_1+s_2+1)}} - \frac{\sigma_{k}(p^{j+1},s_1+1)
         \sigma_{\ell}(p^{j+1},s_2+1)}{p^{(j+1)(s_1+s_2+1)+1}}.
\end{equation}
Since $\mathcal{C}(p,0,s_1,s_2) \ne 0$ for $|s_1|,|s_2| \le \varepsilon_0 <10^{-2}$, Lemma  \eqref{f1f2prod} (ii)
implies that
\begin{equation}
\begin{split} 
    \label{Bfact}
     \mathcal{B}(s_1,s_2)     =  \mathcal{B}_1(s_1,s_2) \mathcal{B}_2(s_1,s_2) \text{ for } |s_1|,|s_2| \le \varepsilon_0,
\end{split}
\end{equation}
where 
\begin{equation}
  \label{B1}
   \mathcal{B}_1(s_1,s_2)  =   \prod_{p}   \mathcal{C}(p,0,s_1,s_2)
\end{equation}
and
\begin{equation}
   \label{B2}
    \mathcal{B}_2(s_1,s_2)  = \prod_{p^{\alpha} \mid \mid h} \sum_{j=0}^{\alpha} 
       \mathcal{C}(p,j,s_1,s_2)   \mathcal{C}(p,0,s_1,s_2)^{-1}.
\end{equation}
We first determine the value of $\mathcal{B}(0,0)$. 
It follows from \eqref{B1}, \eqref{Cp0}, and \eqref{tilCell} that $\mathcal{B}_1(0,0) = \tilde{C}_{k,\ell}$.
Similarly, it follows from  and \eqref{B2}, \eqref{Cpj}, \eqref{Cp0}, and  \eqref{gldef} that $\mathcal{B}_2(0,0) = g_{k,\ell}(h)$.
Hence, $\mathcal{B}(0,0)=\tilde{C}_{k,\ell}g_{k,\ell}(h)$.  

We now establish $(iii)$.  Let $\varepsilon_0$ be as in part (ii).  It shall be convenient to define 
\[
    D_1 = \{ s_1 \in \mathbb{C} \ | \ |s_1| < \varepsilon_0 \} \text{ and }
     D_2 = \{ s_2 \in \mathbb{C} \ | \ |s_2| < \varepsilon_0 \}.
\]
First observe that by the definition \eqref{Cpj} and \eqref{Cp01} and \eqref{Cpj2} we have  
\begin{align}
  \label{Cp02ndbd}
      \mathcal{C}(p,0,s_1,s_2)    & = 1- \frac{k\ell}{p^{s_1+s_2+2}} 
      + O_{k,\ell}( p^{-2.7}) 
      \text{ for } \sigma_1, \sigma_2 \ge -10^{-1}, \\
    \label{Cpj2ndbd}
      \mathcal{C}(p,j,s_1,s_2)   & =  \frac{\tau_{k}(p^j) \tau_{\ell}(p^j)}{p^{j(s_1+s_2+1)}  }
             \Big(1+O_{k,\ell}(p^{-0.9}) \Big)  
              \text{ for } \sigma_1, \sigma_2 \ge -10^{-1}.
\end{align}
In addition, for every $\varepsilon >0$,  $\tau_{k}(p^j), \tau_{\ell}(p^j) \ll_{\ell} p^{j\varepsilon/2}$ and we also have the estimate 
\begin{equation}
  \label{Cpj3}
   \mathcal{C}(p,j,s_1,s_2) \ll p^{-j(\sigma_1+\sigma_2+1- \varepsilon)}
   \text{ for } \sigma_1 \ge -a_1, \sigma_2 \ge -a_2.
\end{equation}
 From \eqref{B1} and \eqref{Cp02ndbd} we see that
\begin{equation}
\begin{split}
\mathcal{B}_1(s_1,s_2) & =   \prod_{p} \Big( 1-\frac{k \ell}{p^{s_1+s_2+2}} + O_{k,\ell}(p^{-2.7}) \Big ) 
 \ll_{k,\ell} 1
\text{ for } \sigma_1, \sigma_2 \ge -10^{-1}.
       \end{split}
\end{equation}
By two applications of Cauchy's integral formula,  
\begin{equation}
   \label{B1bd}
    \mathcal{B}_1^{(i,j)}(s_1,s_2) = O_{k,\ell}(1)  \text{ for } \sigma_1, \sigma_2 \ge -10^{-2}
\end{equation}
where $ \mathcal{B}_1^{(i,j)}$ is defined by \eqref{partials}.
We now estimate $\mathcal{B}_2(s_1,s_2)$. 
First, we examine each local factor at $p$ of $\mathcal{B}_2$. 
Since $D_1 \times D_2 \subset \{ s_1 \in \mathbb{C} \ | \ \sigma_1 \ge -10^{-1} \} 
\times \{ s_2 \in \mathbb{C} \ | \ \sigma_2 \ge -10^{-1} \}$ it follows from \eqref{Cp02ndbd},
\eqref{Cpj2ndbd}, and \eqref{Cpj3} that
\begin{equation} 
\begin{split}
   \label{B2pestimate}
    \sum_{j=0}^{\alpha} 
       \frac{\mathcal{C}(p,j,s_1,s_2)}{   \mathcal{C}(p,0,s_1,s_2)}
   & = 1 + \frac{\frac{k \ell }{p^{s_1+s_2+1}} + O(p^{-1.5})  }{1+ O(p^{-1.8})  }
  + \frac{\sum_{j=2}^{\alpha} p^{-0.7j}  }{1+ O(p^{-1.8})  } \\
   & = 1 + \frac{k \ell}{p^{s_1+s_2+1}} + O(p^{-1.4}) \text{ for } (s_1,s_2) \in D_1 \times D_2. 
\end{split}
\end{equation}
Hence we can factor out a term $(1+ \frac{1}{p^{s_1+s_2+1}})^{k \ell}$ from \eqref{B2}.  Therefore
we may write 
\begin{equation}
  \label{B2fact}
  \mathcal{B}_2(s_1,s_2) = \Theta(s_1+s_1+1,h) \mathcal{B}_3(s_1,s_2) \text{ for } (s_1,s_2) \in D_1 \times D_2,
\end{equation}
where we recall that $\Theta(z,h) = \prod_{p \mid h} (1+ p^{-z})^{k \ell}$ and
\begin{equation}
  \label{B3}
  \mathcal{B}_3(s_1,s_2) =   \prod_{p^{\alpha} \mid \mid h} \Big( \sum_{j=0}^{\alpha} 
       \mathcal{C}(p,j,s_1,s_2)   \mathcal{C}(p,0,s_1,s_2)^{-1}   \Big) \Big(1 + \frac{1}{p^{s_1+s_2+1}} \Big)^{-k \ell}.
\end{equation}
It follows from \eqref{B2pestimate} and \eqref{B3} that 
\[
   \mathcal{B}_3(s_1,s_2) =   \prod_{p   \mid h} (1+O_{k,\ell}(p^{-\sigma_1-\sigma_2 -2}))
   \ll_{k,\ell} \prod_{p} (1 + O_{k,\ell}(p^{-1.8}))
    \ll_{k,\ell} 1 \text{ for } (s_1,s_2) \in D_1 \times D_2.  
\]
By Cauchy's integral formula it follows that 
\begin{equation}
   \label{B3bd}
   \mathcal{B}_{3}^{(i_1,i_2)} (0,0) \ll_{k,\ell} 1.
\end{equation}
We also require an estimate for the partial derivatives of $\mathcal{B}$.  By \eqref{Bfact} and \eqref{B2fact} 
it follows that 
\begin{equation}
  \label{Bfact2}
\mathcal{B}(s_1,s_2) = \Theta(s_1+s_1+1,h)   \tilde{B}(s_1,s_2)
   \text{ for } (s_1,s_2) \in D_1 \times D_2,
\end{equation}
where $\tilde{\mathcal{B}}(s_1,s_2) =\mathcal{B}_1(s_1,s_2) \mathcal{B}_3(s_1,s_2)$.  Note that the generalized product
rule, \eqref{B1bd}, and \eqref{B3bd} imply
\begin{equation}
  \label{tilBbd}
  \tilde{B}^{(i_1,i_2)}(0,0) \ll_{k,\ell} 1. 
\end{equation}
By two applications of the generalized product rule to \eqref{Bfact2} 
\[
  \mathcal{B}^{(i_1,i_2)} (s_1,s_2) = \sum_{a_1+a_2=i_1} \binom{i_1}{a_1}
  \sum_{a_3+a_4=i_2} \binom{i_2}{a_3} 
 \Big(  \frac{ \partial^{a_1} }{ \partial s_{1}^{a_1}  }
  \frac{ \partial^{a_3}}{\partial s_{2}^{a_3}}   \Theta(s_1+s_1+1,h) \Big) \tilde{\mathcal{B}}^{(a_2,a_4)} (s_1,s_2). 
\]
Note that 
\begin{equation}
  \label{thetaid}
  \frac{ \partial^{a_1} }{ \partial s_{1}^{a_1}  }
  \frac{ \partial^{a_3}}{\partial s_{2}^{a_3}}   \Theta(s_1+s_1+1,h)
  = \frac{d^{a_1+a_3}}{dz^{a_1+a_3}} \Theta(z,h) \Big|_{z=s_1+s_2+1}. 
\end{equation}
By \eqref{B3bd} and \eqref{thetaid} it follows that
\[
  \mathcal{B}^{(i_1,i_2)} (0,0) \ll_{k,\ell} \sum_{\alpha=0}^{i_1+i_2} \Theta^{(\alpha)}(1,h)
\]
We now demonstrate for $\alpha \ge 1$
\begin{equation}
  \label{thetaalphabd}
   \Theta^{(\alpha)}(1,h) \ll \Theta(1,h) (\log \log h)^{\alpha}.
\end{equation}
We begin by remarking that 
\begin{equation}
  \label{thetaprime}
\Theta^{(1)}(z,h) = -k \ell  \Theta(z,h) \eta(z,h)
\end{equation}
where 
\begin{equation}
  \label{eta}
\eta(z,h)= \sum_{p \mid h} \frac{\log p}{p^z+1}.
\end{equation} 
By the product rule it follows that 
\begin{equation}
   \label{thetaalpharecur}
    \Theta^{(\alpha)}(z,h) = - k \ell  \sum_{u_1+u_2 =\alpha-1}  \binom{\alpha-1}{u_1} \Theta^{(u_1)}(z,h) \eta^{(u_2)}(z,h).
\end{equation}
A calculation demonstrates that for $u \ge 0$
\[
  \eta^{(u)}(1,h) =
  \sum_{p \mid h} \frac{(\log p)^u}{p} \sum_{j=1}^{\infty} \frac{(-1)^{j-1}(-j)^u}{p^{j-1}}
\]
and thus 
\begin{equation}
  \label{etaubd}
  \eta^{(u)}(1,h) \ll \sum_{p \mid h}   \frac{(\log p)^u}{p}
  \ll \sum_{p \le \log h}  \frac{(\log p)^u}{p} +  \frac{(\log \log h)^u}{\log h} \sum_{p \mid h} 1
  \ll (\log \log h)^u. 
\end{equation}
We now show \eqref{thetaalphabd}.  The case $\alpha=1$ follows from \eqref{thetaprime}
and \eqref{etaubd} with $u =1$.  By induction, using \eqref{thetaalpharecur}
and \eqref{etaubd}, we establish  \eqref{thetaalphabd} for all $\alpha \ge 1$. 
From \eqref{thetaalphabd} we now have
\begin{equation}
  \label{B00bd}
  \mathcal{B}^{(i_1,i_2)} (0,0) \ll_{k,\ell}
  \Theta(1,h) (\log \log h)^{i_1+i_2}.
\end{equation}
\end{proof}
\begin{proof}[Proof of Proposition \ref{zetaintegral}]
Using the first bound, we find the contribution from $|t| \le 1$ to the integral is $O(1)$.  
We now treat the range $|t| \ge 1$.  It is convenient to set  $I(\tau, t) = \int_{0}^{t} |\zeta(\tau+iu)|^{r} du$.
It is well known that for every $\varepsilon >0$,
\begin{equation}
   \label{mvt}
  I(\tau,t) \ll t^{1+\varepsilon} \text{ for } \tau \ge 1-1/r. 
\end{equation}
This follows from \cite[Theorems 7.5,7.7]{Tit}. Note that in the case of Theorem 7.7 of \cite{Tit}, 
the bound $ I(\tau,t) \ll t$ for $\tau > 1-1/r$ is stated, however a minor modification of the proof yields
\eqref{mvt}. 
Since the integrand is even with respect to $t$ the remaining range is 
\begin{equation}
\begin{split}
  2 \varepsilon^{-1} \int_{1}^{\infty} |s(s+1)|^{-1}  |\zeta(s+1)|^{r} dt
  & \ll \varepsilon^{-1}  \int_{1}^{\infty} |\zeta(\sigma+1+it)|^{r} t^{-2} dt \\
  & \ll  \varepsilon^{-1} \Big( -I(\sigma+1,1) +2 \int_{1}^{\infty} I(\sigma+1,t) t^{-3} dt \Big) \\
  & \ll \varepsilon^{-1}\Big( 1 + \int_{1}^{\infty}  t^{-2+\varepsilon} dt \Big)
  \ll \varepsilon^{-1}
\end{split} 
\end{equation}
by an integration by parts and \eqref{mvt}. 
\end{proof}
\section{A probabilistic method for determining main term of $D_{k,\ell}(x,h)$}  \label{Probmethod}

In this section, we use a simple heuristic probabilistic method to rederive  the conjectured formula
\begin{equation}
  \label{conjform}
  D_{k,\ell}(x,h) \sim \frac{c_{k,\ell}(h)}{(k-1)! (\ell-1)!} x (\log x)^{k+\ell-2}
  \text{ for } 1 \le h \le x^{1-\varepsilon}
\end{equation}
for $x$ large, $\varepsilon$ arbitrarily small, and recall that
$C_{k,\ell}$ is defined by \eqref{Ckl} and $f_{k,\ell}(h)$ is defined by \eqref{fklpa}. 
In section three, we derived this conjecture using the $\delta$-method.

The argument in this section has been used to derive conjectures for $\sum_{n \le x} \Lambda(n)\Lambda(n+h)$, 
where $\Lambda(n)$ is the von Mangoldt function (for full details see \cite{Ch} and \cite{Po}).   
The extension to the case of multiplicative functions was explained to the first author by Andrew Granville.
We now proceed with our heuristic derivation of \eqref{conjform}. It is well known that 
\begin{equation}
   \label{Dkx}
   \sum_{n \le x} \tau_k(n) \sim \frac{1}{(k-1)!}  x(\log x)^{k-1}.
\end{equation}
It follows that on average $\tau_k(n)$ in the interval $[1,x]$ is  $\tfrac{1}{(k-1)!}  x(\log x)^{k-1}$.
Similarly, for $1 \le h \le x^{1-\varepsilon}$, $\tau_{\ell}(n+h)$ in the interval $[1,x]$ is also $\tfrac{1}{(\ell-1)!}  x(\log x)^{\ell-1}$.
Thus it is reasonable to believe that for $1 \le h \le x^{1-\varepsilon}$,  $\tau_{k}(n)\tau_{\ell}(n+h)$ is on average $\tfrac{1}{(k-1)!(\ell-1)!}(\log x)^{k+\ell-2}$
in $[1,x]$.   
However, we must take into consideration that the values of $\tau_k(n)$ and $\tau_{\ell}(n+h)$
are not independent.   For instance, if $h=p$ is prime, then if $p \nmid n$ we also have $p \nmid n+h$.  
The factor $C_{k,\ell}f_{k,\ell}(h)$ in \eqref{conjform} accounts for such local considerations. 
In order to make this precise we define a sequence of random variables $(X_p)_{p \text{ prime}}$  by 
\[
   X_{p}(n) = \tau_{k}(p^{\text{ord}_p(n)}) 
\]
where $\text{ord}_p(\cdot)$ is the $p$-adic valuation.   Furthermore, we define 
\[
   Y_{p}(n) =
   \tau_{\ell}(p^{\text{ord}_p(n+h)}).
\]
Associated to a random variable $Y: \mathbb{N} \to \mathbb{C}$ with image $\text{im}(Y) = \{ Y(n) \ | \ n\in \mathbb{N} \}$, its expected value to be
\begin{equation}
  \label{ev}
  \mathbb{E}(Y) = \sum_{i \in \text{im}(Y)}  i \cdot \mathbb{P}(Y= i)
\end{equation}
where for $B \subseteq \mathbb{N}$, 
\begin{equation}
  \label{P}
  \mathbb{P}(B) = \lim_{X \to \infty} \frac{\# \{  1 \le n \le X \ | \ n \in B \} }{X}. 
\end{equation}
With these definitions in hand, it is natural to make the following conjecture. \\
{\bf Conjecture}.  For $\varepsilon \in (0,1)$, $x$ large, and  $1 \le h \le x^{1-\varepsilon}$,
\begin{equation}
   \label{Dkxhprobconj}
   \frac{1}{x} D_{k,\ell}(x,h)  \sim \Bigg( \prod_{p}  \frac{\mathbb{E}(X_p Y_p )}{\mathbb{E}(X_p) \mathbb{E}(Y_p)} \Bigg ) 
  \Big(  \frac{1}{x} \sum_{n \le x} \tau_k(n)  \Big)
  \Big(  \frac{1}{x} \sum_{n \le x} \tau_{\ell}(n+h)  \Big)
\end{equation}
as $x \to \infty$. 

The product in the above conjecture is the correction factor taking into account that the values of $\tau_k(n)$
and $\tau_{\ell}(n+h)$ are not independent. 
Each local factor in the product measures the lack of independence of $X_p$ and $Y_p$.
We shall prove that the product equals $c_{k,\ell}(h)=C_{k,\ell} f_{k,\ell}(h)$, which we computed earlier via the $\delta$-method.   
\begin{proposition} \label{product}
Let $k, \ell, h \in \mathbb{N}$.  Then
\begin{equation}
     \label{cklhexpectedvalue}
     c_{k,\ell}(h) =\prod_{p}  \frac{\mathbb{E}(X_p Y_p )}{\mathbb{E}(X_p) \mathbb{E}(Y_p)}. 
\end{equation}
\end{proposition}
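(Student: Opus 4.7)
The plan is to compute the local factors $\mathbb{E}(X_pY_p)/(\mathbb{E}(X_p)\mathbb{E}(Y_p))$ explicitly for each prime $p$ and show they agree with the corresponding Euler factors of $C_{k,\ell}f_{k,\ell}(h)$, so that taking the product over all primes yields the claim.

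First I would work out the marginal distribution: the natural density in \eqref{P} gives $\mathbb{P}(\mathrm{ord}_p(n)=i)=(1-1/p)p^{-i}$, and combined with the generating identity $\sum_{j\ge 0}\tau_k(p^j)X^j=(1-X)^{-k}$ this produces $\mathbb{E}(X_p)=(1-1/p)^{1-k}$ and $\mathbb{E}(Y_p)=(1-1/p)^{1-\ell}$, so the denominator is $(1-1/p)^{2-k-\ell}$.  Next I would compute the joint distribution of $(\mathrm{ord}_p(n),\mathrm{ord}_p(n+h))$. Setting $\alpha=\mathrm{ord}_p(h)$, the arithmetic of $p$-adic valuations gives three regimes: (i) if $\mathrm{ord}_p(n)=i<\alpha$ then $\mathrm{ord}_p(n+h)=i$, (ii) if $\mathrm{ord}_p(n)=i>\alpha$ then $\mathrm{ord}_p(n+h)=\alpha$, and (iii) if $\mathrm{ord}_p(n)=\alpha$ then $\mathrm{ord}_p(n+h)=\alpha+j$ for some $j\ge 0$, with joint probability $p^{-\alpha-j}(1-1/p)$ for $j\ge 1$ and residual probability $p^{-\alpha}(1-2/p)$ for $j=0$. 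Substituting into $\mathbb{E}(X_pY_p)=\sum_{i,j}\tau_k(p^i)\tau_\ell(p^j)\,\mathbb{P}(\mathrm{ord}_p(n)=i,\mathrm{ord}_p(n+h)=j)$ then gives a four-piece expression.

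The next step is matching. For $p\nmid h$ (so $\alpha=0$), the sum collapses via $(1-1/p)\sum_{i\ge 0}\tau_k(p^i)p^{-i}=1$ to yield $\mathbb{E}(X_pY_p)=(1-1/p)^{1-k}+(1-1/p)^{1-\ell}-1$, which after dividing by the denominator reproduces the $p$-factor of $C_{k,\ell}$ in \eqref{Ckl}. For $p^\alpha\|h$ with $\alpha\ge 1$, I would telescope the finite piece from case (i) using $(1-1/p)p^{-i}=p^{-i}-p^{-i-1}$, combine the result with the $j=0$ contribution from case (iii), and so obtain $1+\sum_{i=1}^{\alpha}[\tau_k(p^i)\tau_\ell(p^i)-\tau_k(p^{i-1})\tau_\ell(p^{i-1})]p^{-i}$ plus error terms at $i=\alpha$. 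The tail contributions from cases (ii) and (iii) are then reorganized using the recurrence $\tau_k(p^j)-\tau_k(p^{j-1})=\tau_{k-1}(p^j)$ of \eqref{recurrence} into $\sum_{i=\alpha+1}^\infty[\tau_\ell(p^\alpha)\tau_{k-1}(p^i)+\tau_k(p^\alpha)\tau_{\ell-1}(p^i)]p^{-i}$. This is exactly the numerator of \eqref{fklpa}, and multiplication of the denominator $(1-1/p)^{-(k-1)}+(1-1/p)^{-(\ell-1)}-1$ in \eqref{fklpa} by $(1-1/p)^{2-k-\ell}$ reproduces the local factor of $C_{k,\ell}$, completing the Euler-factor match.

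The main obstacle I anticipate is the bookkeeping in the diagonal case $\mathrm{ord}_p(n)=\alpha$: one must verify that the anomalous residual probability $p^{-\alpha}(1-2/p)$ (which vanishes at $p=2$) combines cleanly with two boundary terms of the form $\tau_k(p^\alpha)\tau_\ell(p^\alpha)p^{-\alpha-1}$ that are extracted when telescoping each tail, so that the net $-2\tau_k(p^\alpha)\tau_\ell(p^\alpha)p^{-\alpha-1}$ cancels exactly. Once the pointwise matching is in hand, absolute convergence of $\prod_p$ follows from the Taylor expansion $(1-1/p)^{k-1}+(1-1/p)^{\ell-1}-(1-1/p)^{k+\ell-2}=1+O_{k,\ell}(p^{-2})$ for $p\nmid h$, since only finitely many primes divide $h$.
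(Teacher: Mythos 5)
Your proposal is correct and follows essentially the same route as the paper: the paper deduces Proposition \ref{product} from Lemma \ref{evalue}, whose proof carries out exactly the case analysis you describe (the three regimes $\mathrm{ord}_p(n)<\alpha$, $>\alpha$, $=\alpha$, with the residual set of density $p^{-\alpha}(1-\tfrac{2}{p})$ in the diagonal case), telescopes the tails with the recurrence \eqref{recurrence}, and matches the resulting local factors against \eqref{Ckl} and \eqref{fklpa}. One small slip: for $p\nmid h$ the identity you invoke should read $(1-\tfrac1p)\sum_{i\ge0}\tau_k(p^i)p^{-i}=(1-\tfrac1p)^{1-k}$ rather than $=1$, but the value of $\mathbb{E}(X_pY_p)$ you then state is the correct one.
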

By \eqref{Dkxhprobconj}, Proposition \ref{product}, and \eqref{Dkx} we have that 
\begin{equation}
\begin{split}
   \frac{1}{x} D_{k,\ell}(x,h) \sim \frac{c_{k,\ell}(h)}{(k-1)!(\ell-1)!} \log^{k-1}(x) \log^{\ell-1}(x+h)
 \sim   \frac{c_{k,\ell}(h)}{(k-1)!(\ell-1)!}  \log^{k+\ell-2}(x)  
\end{split}
\end{equation}
as $x \to \infty$ for  $h \le  x^{1-\varepsilon}$.
This yields the Additive Divisor Conjecture
(simplified version) stated in the introduction.  

The above proposition is deduced from the next lemma.
\begin{lemma} \label{evalue}
(i)  For every prime $p$, 
\begin{equation}
     \mathbb{E}(X_{p}) 
         = \Big( 1-\frac{1}{p} \Big)^{-(k-1)}
    \text{ and }
      \mathbb{E}(Y_{p}) 
    = \Big( 1-\frac{1}{p} \Big)^{-(\ell-1)}
\end{equation}
(ii) 
If $p \nmid h$, then 
\begin{equation}
    \mathbb{E}(X_{p}Y_{p}) 
  =   \Big( 1-\frac{1}{p} \Big)^{-(k-1)}+ \Big( 1-\frac{1}{p} \Big)^{-(\ell-1)}-1.
\end{equation}
(iii) If $p^{\alpha} \mid \mid h$, then 
\begin{equation}
\begin{split}
  \label{EXpXph}
 \mathbb{E}(X_{p} Y_p)  
=1 & + \sum_{i=1}^{\alpha} (\tau_k(p^i)\tau_{\ell}(p^i)-\tau_k(p^{i-1})\tau_{\ell}(p^{i-1}))X^i \\
   & +      \sum_{i=\alpha+1}^{\infty}  (\tau_k(p^{\alpha}) \tau_{\ell-1}(p^i)
   + \tau_{\ell}(p^{\alpha})   \tau_{k-1}(p^i))X^i.
\end{split}
\end{equation}
\end{lemma}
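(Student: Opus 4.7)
The plan is to identify the joint distribution of $(\text{ord}_p(n), \text{ord}_p(n+h))$ under the natural density on $\mathbb{N}$, and then reduce each expectation to an elementary manipulation using the generating function identity \eqref{divseries} and the recurrence \eqref{recurrence}. Since residues modulo $p^N$ become equidistributed as $N \to \infty$, one has $\mathbb{P}(\text{ord}_p(n) = i) = (1-1/p) p^{-i}$, so part (i) is immediate: $\mathbb{E}(X_p) = (1-1/p) \sum_{i \ge 0} \tau_k(p^i) p^{-i} = (1-1/p)^{-(k-1)}$ by \eqref{divseries}, and the computation for $Y_p$ is identical.

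For (ii) and (iii), I exploit the ultrametric property of the $p$-adic valuation. If $p^\alpha \| h$ and $\text{ord}_p(n) \ne \alpha$, then $\text{ord}_p(n+h) = \min(\text{ord}_p(n), \alpha)$; if instead $\text{ord}_p(n) = \alpha$, writing $n = p^\alpha u$ and $h = p^\alpha H$ with $p \nmid H$ gives $\text{ord}_p(n+h) = \alpha + \text{ord}_p(u+H)$, whose distribution is obtained by conditioning $u$ to be uniform on $\mathbb{Z}_p^\times$. A direct density computation then yields four disjoint cases with joint probabilities
\begin{equation*}
\mathbb{P}(\text{ord}_p(n) = i,\, \text{ord}_p(n+h) = j) = \begin{cases} (1-1/p) p^{-i} & \text{if } 0 \le i = j < \alpha, \\ (1-1/p) p^{-i} & \text{if } i > \alpha, \ j = \alpha, \\ (1-1/p) p^{-j} & \text{if } i = \alpha, \ j > \alpha, \\ (1 - 2/p) p^{-\alpha} & \text{if } i = j = \alpha. \end{cases}
\end{equation*}
Part (ii) is the special case $\alpha = 0$, for which the diagonal $i = j < \alpha$ is empty and the three remaining contributions assemble into $(1-1/p)^{-(k-1)} + (1-1/p)^{-(\ell-1)} - 1$ after the same geometric series computation as in (i) and elementary cancellation.

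For (iii), summing $\tau_k(p^i)\tau_\ell(p^j)$ against the four probabilities expresses $\mathbb{E}(X_p Y_p)$ as a finite diagonal sum over $i < \alpha$, two infinite tails of the form $\tau_\ell(p^\alpha)(1-1/p) \sum_{i > \alpha} \tau_k(p^i) p^{-i}$ (and its $k \leftrightarrow \ell$ counterpart), plus the single $(\alpha, \alpha)$ contribution. The key algebraic identity converting this to the stated form is
\begin{equation*}
(1 - 1/p) \sum_{i > \alpha} \tau_k(p^i) p^{-i} = \sum_{i > \alpha} \tau_{k-1}(p^i) p^{-i} + \tau_k(p^\alpha) p^{-\alpha - 1},
\end{equation*}
obtained by applying \eqref{recurrence} after an index shift and using the boundary identity $\tau_k(p^{\alpha+1}) - \tau_{k-1}(p^{\alpha+1}) = \tau_k(p^\alpha)$. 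The two boundary terms so produced combine with $(1-2/p) p^{-\alpha} \tau_k(p^\alpha)\tau_\ell(p^\alpha)$ to give exactly $\tau_k(p^\alpha)\tau_\ell(p^\alpha) p^{-\alpha}$, which merges with the finite diagonal into $\sum_{i=0}^{\alpha}\tau_k(p^i)\tau_\ell(p^i) p^{-i} - (1/p) \sum_{j=0}^{\alpha-1}\tau_k(p^j)\tau_\ell(p^j) p^{-j}$, and this rearranges into $1 + \sum_{i=1}^{\alpha}(\tau_k(p^i)\tau_\ell(p^i) - \tau_k(p^{i-1})\tau_\ell(p^{i-1})) p^{-i}$ as desired.

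The main obstacle will be the bookkeeping in (iii): the four pieces of the joint distribution must combine in a very tight way so that the boundary terms produced by replacing $\tau_k$ by $\tau_{k-1}$ in the two infinite tails are exactly absorbed by the probability mass at $(\alpha, \alpha)$, with nothing left over. Once this cancellation is arranged correctly the remaining algebra is routine.
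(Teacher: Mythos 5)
Your proposal is correct and follows essentially the same route as the paper: both compute the joint distribution of $(\text{ord}_p(n),\text{ord}_p(n+h))$ from natural densities, then reduce the expectations via the generating function \eqref{divseries} and the recurrence \eqref{recurrence}. Your tabulation of the four-case joint law and the explicit boundary identity $\tau_k(p^{\alpha+1}) = \tau_{k-1}(p^{\alpha+1}) + \tau_k(p^\alpha)$ are accurate and reproduce the paper's case analysis (with $p\nmid n$, $p^i\|n$ for $1\le i\ne\alpha$, and the sub-cases when $p^\alpha\|n$) in a slightly more uniform packaging.
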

In this section we also show that Tao's probabilistic argument \cite{Tao} gives the same answer. 
\begin{proposition}  \label{Taoequality}
Let $k,\ell, h \in \mathbb{N}$.  Then 
\begin{equation}
  c_{k,\ell}(h) = \prod_{p} \mathfrak{S}_{k,\ell,h}(p) 
\end{equation}
where $\mathfrak{S}_{k,\ell,h}(p)$ is defined by \eqref{Sklhp}.  
\end{proposition}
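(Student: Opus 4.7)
The plan is to reduce the proposition to a purely local algebraic identity at each prime and then verify it through a single pointwise identity for $P_{k,\ell,p}(j)$. First, combining Proposition \ref{product} with Lemma \ref{evalue}(i), which gives $\mathbb{E}(X_p)\mathbb{E}(Y_p)=(1-1/p)^{-(k+\ell-2)}$, yields
\[
c_{k,\ell}(h) = \prod_p \Big(1-\tfrac{1}{p}\Big)^{k+\ell-2}\mathbb{E}(X_p Y_p).
\]
Comparing the local factor at $p$ with $\mathfrak{S}_{k,\ell,h}(p)$ in \eqref{Sklhp}, the proposition reduces to the local identity
\[
\mathbb{E}(X_p Y_p) = \sum_{j=0}^\alpha \frac{P_{k,\ell,p}(j)}{p^j}
\]
for each prime $p$ with $p^\alpha\|h$ (including $\alpha=0$).

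Next, I will argue that this local identity follows by telescoping from the single pointwise identity
\[
P_{k,\ell,p}(j) = \tau_{k-1}(p^j)\tau_{\ell-1}(p^j) + \tau_{k-1}(p^j)T_{\ell-1}(j) + \tau_{\ell-1}(p^j)T_{k-1}(j), \qquad (\star)
\]
for all $j\ge 0$, where $T_m(j) := \sum_{n \ge 1}\tau_m(p^{n+j})p^{-n}$. The base case $\alpha=0$ is immediate: the conventions $\binom{-1}{0}=1$ and $\binom{m-1}{m}=0$ collapse \eqref{Pklpj} at $j=0$ to $P_{k,\ell,p}(0)=(1-1/p)^{-(k-1)}+(1-1/p)^{-(\ell-1)}-1$, matching Lemma \ref{evalue}(ii). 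For the inductive step, I will compute $\mathbb{E}(X_p Y_p)\big|_{\alpha+1}-\mathbb{E}(X_p Y_p)\big|_\alpha$ directly from \eqref{EXpXph}: the recurrence $\tau_k(p^{\alpha+1})-\tau_k(p^\alpha)=\tau_{k-1}(p^{\alpha+1})$ from \eqref{recurrence} handles the tail terms, and the Pascal-based identity already verified in section \ref{History} collapses the coefficient of $p^{-(\alpha+1)}$ to $\tau_{k-1}(p^{\alpha+1})\tau_{\ell-1}(p^{\alpha+1})$. After reindexing $i=n+\alpha+1$ in the tail, the difference becomes $p^{-(\alpha+1)}$ times the right-hand side of $(\star)$ at $j=\alpha+1$, matching the telescoping difference $p^{-(\alpha+1)}P_{k,\ell,p}(\alpha+1)$ on the other side.

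To prove $(\star)$, I will factor the double sum in \eqref{Pklpj}. Setting $r=k-k'$ and $s=\ell-\ell'$ and splitting the bracket, the hockey-stick identity $\sum_{s=0}^{\ell-2}\binom{s+j-1}{s}=\binom{\ell+j-2}{j}=\tau_{\ell-1}(p^j)$ gives
\[
P_{k,\ell,p}(j) = A_j\,\tau_{\ell-1}(p^j)+\tau_{k-1}(p^j)\,B_j-\tau_{k-1}(p^j)\tau_{\ell-1}(p^j),
\]
with $A_j := \sum_{r=0}^{k-2}\binom{r+j-1}{r}(1-1/p)^{-(k-r-1)}$ and $B_j$ its analogue in $\ell$. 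Thus $(\star)$ reduces to $A_j=\tau_{k-1}(p^j)+T_{k-1}(j)$. Expanding $(1-1/p)^{-(k-r-1)}=\sum_{n\ge 0}\binom{k-r-2+n}{n}p^{-n}$ and matching coefficients of $p^{-n}$, this further reduces to the Chu--Vandermonde identity
\[
\binom{k+n+j-2}{k-2}=\sum_{r=0}^{k-2}\binom{r+j-1}{r}\binom{k-r-2+n}{n},
\]
which is standard.

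The main obstacle will be the telescoping step: carefully tracking how the infinite tail sum in Lemma \ref{evalue}(iii) shifts under $\alpha\mapsto\alpha+1$ and recognizing the clean form of the coefficient at $p^{-(\alpha+1)}$ after applying \eqref{recurrence} and the Pascal identity. Once the manipulations isolate $(\star)$, the remaining verification is a routine Vandermonde-type convolution.
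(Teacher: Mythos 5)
Your proposal is correct and takes essentially the same route as the paper: both reduce to the local identity $\mathbb{E}(X_pY_p)=\sum_{j=0}^\alpha p^{-j}P_{k,\ell,p}(j)$, simplify $P_{k,\ell,p}(j)$ via the hockey-stick step to the form \eqref{Pklpj2}, and close the induction on $\alpha$ with the same Vandermonde--Gould convolution (the paper's \eqref{finalidentity}, which is Gould (1.78)). Your only departure is organizational — you isolate the pointwise identity $(\star)$ for $P_{k,\ell,p}(j)$ and then telescope, whereas the paper matches the inductive differences $\mathcal{L}_{k,\ell}(\alpha+1)-\mathcal{L}_{k,\ell}(\alpha)$ and $\tilde{\mathcal{R}}_{k,\ell}(\alpha+1)-\tilde{\mathcal{R}}_{k,\ell}(\alpha)$ directly — and your reading of the coefficient of $X^{\alpha+1}$ as $\tau_{k-1}(p^{\alpha+1})\tau_{\ell-1}(p^{\alpha+1})$ is the correct one (the paper's displayed \eqref{Lkdifference} has a typo writing $\tau_k\tau_\ell$ there, contradicting both the Pascal-identity derivation immediately before it and the comparison immediately after).
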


We now demonstrate the proof of Proposition \ref{product} based on this lemma.  
\begin{proof}[Proof of Proposition \ref{product}]
If $p \nmid h$, then by Lemma \ref{evalue} (i) and (ii)
\[
    \frac{\mathbb{E}(X_p Y_p )}{\mathbb{E}(X_p) \mathbb{E}(Y_p)} = 
    \frac{\Big( 1-\frac{1}{p} \Big)^{-(k-1)}+\Big( 1-\frac{1}{p} \Big)^{-(\ell-1)}-1}{  \Big( 1-\frac{1}{p} \Big)^{-(k-1)} \Big( 1-\frac{1}{p} \Big)^{-(\ell-1)}}
    = \Big( 1-\frac{1}{p} \Big)^{k-1} + \Big( 1-\frac{1}{p} \Big)^{\ell-1} - \Big( 1-\frac{1}{p} \Big)^{k+\ell-2}.
\]
Therefore
\begin{equation}
\begin{split}
 \label{fkpa3}
 & \prod_{p}  \frac{\mathbb{E}(X_p Y_p )}{\mathbb{E}(X_p) \mathbb{E}(Y_p)} 
   = C_{k,\ell} \prod_{p^{\alpha} \mid \mid h}  \frac{ \mathbb{E}(X_p Y_p )}{\mathbb{E}(X_p) \mathbb{E}(Y_p)} \\
  & = C_{k,\ell} \prod_{p^{\alpha} \mid \mid h}  \frac{
 1 + \sum_{i=1}^{\alpha} (\tau_k(p^i)\tau_{\ell}(p^i)-\tau_k(p^{i-1})\tau_{\ell}(p^{i-1}))X^i 
    +     \sum_{i=\alpha+1}^{\infty} ( \tau_k(p^{\alpha})  \tau_{\ell-1}(p^i)
   + \tau_{\ell}(p^{\alpha})   \tau_{k-1}(p^i))X^i
 }{\Big( \Big( 1-\frac{1}{p} \Big)^{-(k-1)}+\Big( 1-\frac{1}{p} \Big)^{-(\ell-1)}-1 \Big)} \\
 & =  C_{k,\ell} f_{k,\ell}(h)
\end{split}
\end{equation}
by an application of Lemma \ref{evalue} part (iii). 
\end{proof}

Before establishing Lemma \ref{evalue}, we make a few observations. 
\begin{equation}
  \label{pndiv}
   \mathbb{P} \Big( \{ n \in \mathbb{N} \ | \ p \nmid n \} \Big) = 1-\frac{1}{p}
\end{equation}
and for $i \ge 1$, 
\begin{equation}
   \label{piddn}
     \mathbb{P} \Big( \{ n \in \mathbb{N} \ | \ p^i \mid \mid n\} \Big) = \frac{1}{p^i}-\frac{1}{p^{i+1}},
\end{equation}
where $\mathbb{P}$ is defined by \eqref{P}. 
Idenitity \eqref{pndiv} is since  $n$ lies in $p-1$ of $p$ residue classes modulo $p$ and \eqref{piddn} 
follows from writing $n=p^i n'$ where $(n',p)=1$.  
\begin{proof}[Proof of Lemma \ref{evalue}] (i)
First, we compute $\mathbb{E}(X_p)$.  The values of $X_p$ are precisely $\tau_k(p^i)$ for $i \ge 0$. 
Note that if $i=0$, then $X_p=1$.  This means that $p \nmid n$ and the probability of this occurring is 
$1-\frac{1}{p}$, since $n$ lies in $p-1$ of $p$ residue classes modulo $p$.  Now $X_p(n) = \tau_k(p^i)$ with $i \ge 1$, 
precisely when $p^i \mid \mid n$.  This occurs with probability $\frac{1}{p^i} -\frac{1}{p^{i+1}}$.   Therefore
\[
   \mathbb{E}(X_p) = 1-\frac{1}{p} + \sum_{i=0}^{\infty} \tau_k(p^i) \Big(\frac{1}{p^i} -\frac{1}{p^{i+1}} \Big)
   = \Big( 1-\frac{1}{p} \Big) \sum_{i=0}^{\infty} \frac{\tau_k(p^i)}{ p^{i}}
    = \Big( 1-\frac{1}{p} \Big)^{-(k-1)}. 
\]
A similar argument establishes 
$\mathbb{E}(Y_p) 
    = ( 1-\frac{1}{p} )^{-(\ell-1)}$. \\
(ii) We now compute $\mathbb{E}(X_p Y_p )$, in the case $p \nmid h$. 
If $n  \not \equiv 0,-h (\text{mod } p)$, then $X_p(n)=Y_p(n)=1$.  
The probability of this occurring is $\frac{p-2}{p}=1-\frac{2}{p}$. 
If $n  \equiv 0 (\text{mod } p)$ and $p^{i} \mid \mid n$ with $i \ge 1$, then $p \nmid n+h$. 
Therefore $X_{p}(n)= \tau_k(p^i)$ and $Y_p(n)=1$.   The probability of this 
occurring is $\frac{1}{p^i} - \frac{1}{p^{i+1}}$.   Similarly, if  $n  \equiv -h (\text{mod } p)$ and $p^{i} \mid \mid n+h$ with $i \ge 1$, 
then $p \nmid n$. 
Therefore $X_{p}(n)= 1$ and $Y_p(n)=\tau_{\ell}(p^i)$ and the probability of this 
occurring is $\frac{1}{p^i} - \frac{1}{p^{i+1}}$.  It follows that 
\[
  \mathbb{E}(X_p Y_p) = 1-\frac{2}{p} +  \sum_{i=1}^{\infty} \tau_k(p^i) \Big( \frac{1}{p^i} - \frac{1}{p^{i+1}} \Big)
  +   \sum_{i=1}^{\infty} \tau_{\ell}(p^i) \Big( \frac{1}{p^i} - \frac{1}{p^{i+1}} \Big).
\]
Now 
\begin{align*}
   \sum_{i=1}^{\infty} \tau_k(p^i) \Big( \frac{1}{p^i} - \frac{1}{p^{i+1}} \Big) 
   & = \Big( 1-\frac{1}{p} \Big)  \sum_{i=1}^{\infty} \frac{\tau_k(p^i)}{p^i} 
   = \Big( 1-\frac{1}{p} \Big) \Big( \Big(1 -\frac{1}{p} \Big)^{-k} -1 \Big)\\
  &  = \Big( 1-\frac{1}{p} \Big)^{-(k-1)}  -1 + \frac{1}{p}
\end{align*}
and thus 
\begin{align*}
\mathbb{E}(X_p Y_p) & = 
1-\frac{2}{p} + \Big( 1-\frac{1}{p} \Big)^{-(k-1)}  -1 + \frac{1}{p} +\Big( 1-\frac{1}{p} \Big)^{-(\ell-1)}  -1 + \frac{1}{p} \\
& =  \Big( 1-\frac{1}{p} \Big)^{-(k-1)} + \Big( 1-\frac{1}{p} \Big)^{-(\ell-1)} -1.
\end{align*}
(iii)We now compute $\mathbb{E}(X_p Y_p )$,  in the case $p^{\alpha} \mid \mid h$. \\
If $n \not \equiv 0 (\text{mod } p)$, then $n+h \not \equiv 0 (\text{mod } p)$.  This is since if $p \mid n+h$, then $p \mid n$ 
as $p^{\alpha} \mid \mid h$.  This case occurs with probability $1-\frac{1}{p}$ and for these $n$, $X_p(n) = Y_p(n) =1$. 
These terms contribute
\begin{equation}
   \label{cont1}
  1 \cdot \Big(1-\frac{1}{p} \Big) = 1-\frac{1}{p}
\end{equation}
to $\mathbb{E}(X_p Y_p)$.  
Now consider $p^i \mid \mid n$ with $i \ge 1$.  In this case, $X_p(n) = \tau_k(p^{i})$.  We now determine the order 
of $p$ dividing $n+h$.  Writing $n=p^i n'$ and $h=p^{\alpha} h'$ with $(n',p)=(h',p)=1$, we have 
\[
   n+h = p^i n' + p^{\alpha} h' = p^{\min(i,\alpha)} (n' p^{i- \min(i,\alpha)} +h' p^{\alpha- \min(i,\alpha)}). 
\]
Note that if $i \ne \alpha$, then $\text{ord}_{p}(n+h) = \min(i,\alpha)$ and  $Y_p(n) = \tau_{\ell}(p^{\min(i, \alpha)})$.
These terms make a contribution 
\begin{equation}
   \label{cont2}
   \sum_{\substack{i=1 \\ i \ne \alpha}}^{\infty} \tau_k(p^i) \tau_{\ell}(p^{\min(i,\alpha)}) \Big( \frac{1}{p^i} - \frac{1}{p^{i+1}} \Big). 
\end{equation}
to $\mathbb{E}(X_p Y_p)$.  
However, if $i =\alpha$, then $X_{p}(n) = \tau_k(p^{\alpha})$.  Now we determine the power of $p$ dividing
$n+h$.  Since $n+h = p^{\alpha} (n'+h')$, the $p$-adic valuation depends on the order of $p$ dividing $n'+h'$.  
Since $(n',p)=1$, it falls in $p-1$ residue classes modulo $p$.
If $n'   \not \equiv -h' (\mbox{mod }  p )$, then $\text{ord}_p(n+h) =\alpha$. 
If $n'   \equiv -h' (\mbox{mod }  p )$, then  there exists $j \ge 1$ such that $p^j \mid \mid n'+h'$
and $\text{ord}_p(n+h)=\alpha+j$.  By these observations we  have the disjoint union
\[
  \{ n \in \mathbb{N} \ | \ p^{\alpha} \mid \mid n\}
  =    A_0  \cup \bigcup_{j=1}^{\infty} A_j
\]
where $A_0 =  \{ n \in \mathbb{N} \ | \ p^{\alpha} \mid \mid n,  n'   \not \equiv -h' (\mbox{mod }  p ) \}$, $A_j=  \{ n \in \mathbb{N} \ | \ p^{\alpha} \mid \mid n,  p^j \mid \mid n'  +h'   \} $, and $n'=\tfrac{n}{p^{\alpha}}$. 
Since $(n',p)=1$ and $n' \not \equiv -h' (\mbox{mod }  p )$, it follows that $\mathbb{P}(A_0) = \frac{1}{p^{\alpha}}(1-\frac{2}{p})$
as $n'$ lies  in $p-2$ residue classes modulo $p$.  A similar calculation establishes that $\mathbb{P}(A_j)= \frac{1}{p^{\alpha}}(\frac{1}{p^j}
-\frac{1}{p^{j+1}})$.   If $n \in A_0$, then $Y_p(n) = \tau_{\ell}(p^{\alpha})$ and if $n \in A_j$, then 
$Y_p(n) =\tau_{\ell}(p^{\alpha+j})$.
The contribution from all terms with $p^{\alpha} \mid \mid n$ is 
\begin{equation}
    \label{cont3}
     \frac{ \tau_k(p^{\alpha}) \tau_{\ell}(p^{\alpha}) }{p^{\alpha}}  \Big(1 - \frac{2}{p} \Big) +
    \sum_{j=1}^{\infty} \frac{ \tau_k(p^{\alpha}) \tau_{\ell}(p^{\alpha+j})}{p^{\alpha}}  
    \Big( \frac{1}{p^{j}} - \frac{1}{p^{j+1}} \Big).
\end{equation} 
Combining \eqref{cont1}, \eqref{cont2}, and \eqref{cont3} yields
\begin{align*}
  \mathbb{E}(X_p Y_p) & = 1-\frac{1}{p} + 
  \Big ( 1 - \frac{1}{p} \Big )    \sum_{\substack{ i \ge 1 \\i \ne \alpha}} \frac{\tau_k(p^i) \tau_{\ell}(p^{\min(i,\alpha)})}{p^i}
 +  
   \frac{\tau_k(p^{\alpha})\tau_{\ell}(p^{\alpha}) }{p^{\alpha}}  \Big(1 - \frac{2}{p} \Big)  
  +
    \sum_{j=1}^{\infty} \frac{\tau_k(p^{\alpha}) \tau_{\ell}(p^{\alpha+j})}{p^{\alpha}}  
    \Big( \frac{1}{p^{j}} - \frac{1}{p^{j+1}} \Big) \\
& =  \Big ( 1 - \frac{1}{p} \Big )  \Big( 1  + \sum_{\substack{ i \ge 1 \\i \ne \alpha}} \frac{\tau_k(p^i)   \tau_{\ell}(p^{\min(i,\alpha)}) }{p^i}+ 
   \frac{\tau_k(p^{\alpha})\tau_{\ell}(p^{\alpha}) }{p^{\alpha}}  \Big(1 - \frac{2}{p} \Big) \Big(1 - \frac{1}{p} \Big)^{-1}+
\sum_{j=1}^\infty
\frac{\tau_k(p^\alpha)\tau_{\ell}(p^{\alpha+j})}{p^{\alpha+j}}
\Big) \\
  & =  \Big ( 1 - \frac{1}{p} \Big )  \Big( 
\sum_{i=0}^{\alpha-1} \frac{\tau_k(p^i)\tau_{\ell}(p^i) }{p^i} + \sum_{j=0}^{\infty} \frac{\tau_k(p^{\alpha}) \tau_{\ell}(p^{\alpha+j})}{p^{\alpha+j}}  + \sum_{j=0}^{\infty} \frac{\tau_k(p^{\alpha+j}) \tau_{\ell}(p^{\alpha})}{p^{\alpha+j}} 
-
   \frac{\tau_k(p^{\alpha})\tau_{\ell}(p^{\alpha}) }{p^{\alpha}} \Big(1 - \frac{1}{p} \Big)^{-1} 
  \Big). 
\end{align*}
We simplify this a bit further. Setting $X=\frac{1}{p}$ we have
\begin{align*}
    \mathbb{E}(X_p Y_p) & = \sum_{i=0}^{\alpha-1} \tau_k(p^i)\tau_{\ell}(p^i)X^i -X  \sum_{i=0}^{\alpha-1} \tau_k(p^i)\tau_{\ell}(p^i) X^i
    +   \sum_{j=0}^{\infty} \tau_k(p^{\alpha}) \tau_{\ell}(p^{\alpha+j})X^{\alpha+j} 
    - X \sum_{j=0}^{\infty} \tau_k(p^{\alpha}) \tau_{\ell}(p^{\alpha+j})X^{\alpha+j}   \\
    &  +\sum_{j=0}^{\infty} \tau_k(p^{\alpha+j}) \tau_{\ell}(p^{\alpha}) X^{\alpha+j} 
    - X \sum_{j=0}^{\infty} \tau_k(p^{\alpha+j}) \tau_{\ell}(p^{\alpha}) X^{\alpha+j}  - \tau_k(p^{\alpha})\tau_{\ell}(p^{\alpha}) X^{\alpha} \\
    & = 1 + \sum_{i=1}^{\alpha-1} (\tau_k(p^i)\tau_{\ell}(p^i)-\tau_k(p^{i-1})\tau_{\ell}(p^{i-1}))X^i -\tau_k(p^{\al-1})\tau_{\ell}(p^{\al-1}) X^{\al} \\
   &  + \tau_k(p^{\alpha}) \Big( \tau_{\ell}(p^{\alpha}) X^{\alpha} + \sum_{i=\alpha+1}^{\infty} (\tau_{\ell}(p^i)-\tau_{\ell}(p^{i-1}))X^i \Big)  \\
  &   + \tau_{\ell}(p^{\alpha}) \Big( \tau_{k}(p^{\alpha}) X^{\alpha} + \sum_{i=\alpha+1}^{\infty} (\tau_{k}(p^i)-\tau_{k}(p^{i-1}))X^i \Big) 
   - \tau_k(p^{\alpha})\tau_{\ell}(p^{\alpha}) X^{\alpha} \\
   & =   1 + \sum_{i=1}^{\alpha} (\tau_k(p^i)\tau_{\ell}(p^i)-\tau_k(p^{i-1})\tau_{\ell}(p^{i-1}))X^i
   +  \tau_k(p^{\alpha})   \sum_{i=\alpha+1}^{\infty} \tau_{\ell-1}(p^i)X^i  
   + \tau_{\ell}(p^{\alpha})   \sum_{i=\alpha+1}^{\infty} \tau_{k-1}(p^i)X^i,
\end{align*}
by two applications of \eqref{recurrence}.
This establishes \eqref{EXpXph} and completes the proof of Lemma \ref{evalue}.
\end{proof}
Finally, we establish Proposition \ref{Taoequality}. 
\begin{proof}[Proof of Proposition \ref{Taoequality}]
First we show that expressions given for $P_{k,\ell,p}(j)$ in \eqref{Pklpj} and \eqref{Pklpj2} are equal.
Observe that 
\begin{align*}
   \sum_{k'=2}^{k} \binom{k-k'+j-1}{k-k'}
   =\sum_{i=0}^{k-2} \binom{i+j-1}{i} 
   =\sum_{i=0}^{k-2} \tau_{i}(p^j) = \tau_{k-1}(p^j)
\end{align*}
and 
\begin{align*}
   \sum_{k'=2}^{k} \binom{k-k'+j-1}{k-k'}  \Big( \frac{p}{p-1} \Big)^{k'-1}
   = \sum_{i=0}^{k-2} \binom{i+j-1}{i}   \Big( \frac{p}{p-1} \Big)^{k-i-1}.
\end{align*}
Therefore 
\begin{align*}
   P_{k,\ell,p}(j)  &=  \tau_{k-1}(p^j)  \sum_{i=0}^{\ell-2} \binom{i+j-1}{i}   \Big( \frac{p}{p-1} \Big)^{\ell-i-1} 
    + \tau_{\ell-1}(p^j)  \sum_{i=0}^{k-2} \binom{i+j-1}{i}   \Big( \frac{p}{p-1} \Big)^{k-i-1} 
   -\tau_{k-1}(p^j) \tau_{\ell-1}(p^j).
\end{align*}
From \eqref{cklNgThom}, \eqref{cklTao}, and \eqref{Pklpj2}
it suffices to prove 
$\mathcal{L}_{k,\ell}(\alpha) = \tilde{\mathcal{R}}_{k,\ell}(\alpha)$
where 
\begin{align*}
   \mathcal{L}_{k,\ell}(\alpha) 
  & =1+\sum_{i=1}^{\alpha} ( \tau_k(p^i) \tau_{\ell}(p^i)- \tau_k(p^{i-1}) \tau_{\ell}(p^{i-1}))X^{i}   
         + \tau_k(p^{\alpha})  \sum_{i=\alpha+1}^{\infty} \tau_{\ell-1}(p^i) X^i
     + \tau_{\ell}(p^{\alpha})  \sum_{i=\alpha+1}^{\infty} \tau_{k-1}(p^i) X^i, \\
  \tilde{\mathcal{R}}_{k,\ell}(\alpha) & = \sum_{j=0}^{\alpha} X^j \Big(  \tau_{k-1}(p^j)  \sum_{i=0}^{\ell-2} \binom{i+j-1}{i}   \Big( \frac{p}{p-1} \Big)^{\ell-i-1} \\
   & + \tau_{\ell-1}(p^j)  \sum_{i=0}^{k-2} \binom{i+j-1}{i}   \Big( \frac{p}{p-1} \Big)^{k-i-1}
    -\tau_{k-1}(p^j)\tau_{\ell-1}(p^j) \Big)
 \end{align*}
 and $X=\frac{1}{p}$. 
 We shall prove this by induction.  As before, we have 
$\mathcal{L}_{k,\ell}(1) = \tilde{\mathcal{R}}_{k,\ell}(1)$ where the value is given by \eqref{L1R1}.
 Now assume that for $\alpha \in \mathbb{N}$, $\mathcal{L}_{k,\ell}(\alpha) = \tilde{\mathcal{R}}_{k,\ell}(\alpha)$.
 We aim to show that $\mathcal{L}_{k,\ell}(\alpha+1) = \tilde{\mathcal{R}}_{k,\ell}(\alpha+1)$.
 Recall that we showed \eqref{Lkdifference}
\begin{align*}
  \mathcal{L}_k(\alpha+1) - \mathcal{L}_k(\alpha)  & =    \tau_k(p^{\alpha+1}) \tau_{\ell}(p^{\alpha+1})X^{\alpha+1}  +   \sum_{i=\alpha+2}^{\infty} (\tau_{k-1}(p^{\alpha+1}) \tau_{\ell-1}(p^i) 
     +   \tau_{\ell-1}(p^{\alpha+1})  \tau_{k-1}(p^i)) X^i.
\end{align*}
On the other hand
\begin{align*}
 & \tilde{\mathcal{R}}_{k,\ell}(\alpha+1)-\tilde{\mathcal{R}}_{k,\ell}(\alpha)   =  \\
  & X^{\alpha+1} \Big(  \tau_{k-1}(p^{\alpha+1})
   \sum_{i=0}^{\ell-2} \binom{i+\alpha}{i}(1-X)^{-\ell+i+1} 
   +\tau_{\ell-1}(p^{\alpha+1})
   \sum_{i=0}^{k-2} \binom{i+\alpha}{i}(1-X)^{-k+i+1} 
  -\tau_{k-1}(p^{\alpha+1}) \tau_{\ell-1}(p^{\alpha+1}) 
  \Big).
\end{align*}
We see that $\mathcal{L}_{k,\ell}(\alpha+1)  -\mathcal{L}_{k,\ell}(\alpha) = \tilde{\mathcal{R}}_{k,\ell}(\alpha+1)-
\tilde{\mathcal{R}}_{k,\ell}(\alpha)$ if and only if 
\begin{align*}
  & \tau_{k-1}(p^{\alpha+1}) \tau_{\ell-1}(p^{\alpha+1})X^{\alpha+1} 
    +  \tau_{k-1}(p^{\alpha+1}) \sum_{i=\alpha+2}^{\infty} \tau_{\ell-1}(p^i) X^i 
     +   \tau_{\ell-1}(p^{\alpha+1}) \sum_{i=\alpha+2}^{\infty} \tau_{k-1}(p^i) X^i \\
 & = X^{\alpha+1} \Big(  \tau_{k-1}(p^{\alpha+1})
   \sum_{i=0}^{\ell-2} \binom{i+\alpha}{i}(1-X)^{-\ell+i+1} 
   +\tau_{\ell-1}(p^{\alpha+1})
   \sum_{i=0}^{k-2} \binom{i+\alpha}{i}(1-X)^{-k+i+1} 
  -\tau_{k-1}(p^{\alpha+1}) \tau_{\ell-1}(p^{\alpha+1}) 
  \Big). 
\end{align*}
Rearranging this becomes
\begin{equation}
\begin{split}
  \label{reducedidentity}
  &  \tau_{k-1}(p^{\alpha+1}) \sum_{i=\alpha+1}^{\infty} \tau_{\ell-1}(p^i) X^i 
     +   \tau_{\ell-1}(p^{\alpha+1}) \sum_{i=\alpha+1}^{\infty} \tau_{k-1}(p^i) X^i \\
 & = X^{\alpha+1} \Big(  \tau_{k-1}(p^{\alpha+1})
   \sum_{i=0}^{\ell-2} \binom{i+\alpha}{i}(1-X)^{-\ell+i+1} 
   +\tau_{\ell-1}(p^{\alpha+1})
   \sum_{i=0}^{k-2} \binom{i+\alpha}{i}(1-X)^{-k+i+1} 
  \Big). 
\end{split}
\end{equation}
Observe that the left hand side of \eqref{reducedidentity} is
\begin{equation}
  \label{leftside}
  \sum_{i=\alpha+1}^{\infty} ( 
   \tau_{k-1}(p^{\alpha+1}) \tau_{\ell-1}(p^i)
     +   \tau_{\ell-1}(p^{\alpha+1})\tau_{k-1}(p^i)) X^i.
\end{equation}
Using \eqref{divseries}
we see that the right hand side of 
\eqref{reducedidentity} is 
\begin{equation}
\begin{split}
  \label{rightside}
  & X^{\alpha+1} \Big(  \tau_{k-1}(p^{\alpha+1})
   \sum_{u=0}^{\ell-2} \binom{u+\alpha}{u} \sum_{u=0}^{\infty} \tau_{\ell-u-1}(p^j) X^j 
   +\tau_{\ell-1}(p^{\alpha+1})
   \sum_{u=0}^{k-2} \binom{u+\alpha}{u}\sum_{u=0}^{\infty} \tau_{k-u-1}(p^j) X^j  \Big) \\
   & = \sum_{i=\alpha+1}^{\infty}
   \Big(   \tau_{k-1}(p^{\alpha+1})  \sum_{u=0}^{\ell-2} \binom{u+\alpha}{u}\tau_{\ell-u-1}(p^{i-(\alpha+1)})
   +  \tau_{\ell-1}(p^{\alpha+1})  \sum_{u=0}^{k-2} \binom{u+\alpha}{u}\tau_{k-u-1}(p^{i-(\alpha+1)})
   \Big)
   X^{i}.
\end{split}
\end{equation}
Therefore we see that \eqref{reducedidentity} holds if the  coefficient  of $X^i$ in \eqref{leftside} and \eqref{rightside} are equal.
In fact, we shall show that for $i \ge \alpha+1$ that 
\begin{equation}
  \label{finalidentity}
    \tau_{\ell-1}(p^i) = \sum_{u=0}^{\ell-2} \binom{u+\alpha}{u}\tau_{\ell-u-1}(p^{i-(\alpha+1)})
    \text{ and }
    \tau_{k-1}(p^i) =\sum_{u=0}^{k-2} \binom{u+\alpha}{u}\tau_{k-u-1}(p^{i-(\alpha+1)}).
\end{equation}
Observe that the second identity is the same as the first with $k$ and $\ell$ swapped.  
 Thus it suffices to establish the first identity in \eqref{finalidentity}.  By \eqref{taukpj} this reads as 
 \begin{equation}
     \binom{\ell+i-2}{i} 
     = \sum_{u=0}^{\ell-2} \binom{u+\alpha}{u}
     \binom{\ell-u-1+i-(\al+1)-1 }{i-(\al+1)} 
     \text{ for } i \ge \al+1. 
 \end{equation}
 Letting $L=\ell-1$ and $j=i-(\al+1)$, this is the same as 
 \begin{equation}
     \binom{L+j+\al}{1+j+\al} 
     = \sum_{u=0}^{L-1} \binom{u+\alpha}{u}
     \binom{L-u+j-1 }{j} 
     \text{ for }  j \ge 0. 
 \end{equation} 
 However, this is identity (1.78) of \cite{G} 
\begin{equation}
  \binom{a+r+n+1}{n} = \sum_{u=0}^{n} \binom{a+u}{u} \binom{r+n-u}{n-u}
\end{equation}
with $n=L-1$, $a=\al$, and $r=j$.   It follows that $\mathcal{L}_{k,\ell}(\alpha+1) - \mathcal{L}_{k,\ell}(\alpha)
= \tilde{\mathcal{R}}_{k,\ell}(\alpha+1)- \tilde{\mathcal{R}}_{k,\ell}(\alpha)$
and thus $ \mathcal{L}_{k,\ell}(\alpha)
= \tilde{\mathcal{R}}_{k,\ell}(\alpha)$ for all $\alpha \in \mathbb{N}$. 
\end{proof}

\section{Concluding remarks} \label{Conclusion}
In this article, we studied the sum  $D_{k,\ell}(x,h)$.
Lower bounds for this sum were obtained and the main term in its conjectured asymptotic was studied.  
We now mention several avenues of possible future research.  \\
\begin{enumerate}
\item Improve the lower bounds for $D_{k,\ell}(x,h)$.   One might attempt to use inequalities of the shape 
\[
   \tau_k(n) \ge \sum_{ \substack{\alpha_1 \cdots \alpha_k = n   \\ \prod_{i \in \mathcal{J}}  \alpha_i \le x^{\beta_i}}}  1
\]
where  $\mathcal{J}$ ranges over certain subsets of $\{ 1, \ldots, k \}$ and $\sum_{i=1}^{k} \beta_i \le 1/2$. 
\item Establish a version of the uniform bound \eqref{Danbd2}, making the $k$ dependence explicit.  
Currently, even the bound \eqref{Danbd2} for $h \le x^{C}$ does not appear in a published reference. 
\item It seems possible that the probabilistic method of section \ref{Probmethod} can be used to obtain the 
full main term asymptotic for $D_{k,\ell}(x,h)$.  
\item Study the more general sums
\begin{equation}
  \label{generaldivisor}
  \sum_{n \le x} \tau_{k_1}(n+h_1) \cdots \tau_{k_r}(n+h_r)
\end{equation}
where $r \ge 2$, $k_1, \ldots, k_r \in \mathbb{R}^{+}$ and $h_1, \ldots, h_r \in \mathbb{Z}$.  
It is likely that the methods of this article may be applied to obtain 
lower bounds for \eqref{generaldivisor} of the correct order of magnitude and to write down conjectural asymptotic formula for this sum. 
The asymptotic evaluation of \eqref{generaldivisor} is an open problem and this is well-known to the experts. 
\footnote{The first author gave a talk  at ICERM (Brown University) on Nov. 12, 2015  and mentioned this open problem.}
For instance, it is an open problem to evaluate the sum 
\[
  \sum_{n \le x} \tau(n) \tau(n+1) \tau(n+2).  
\]
It should be noted that Blomer \cite{Bl} recently succeeded in evaluating the triple correlation sum 
\[
     \sum_{x \le n \le 2x} \tau(n-h) \tau(n) \tau(n+h)  
\]
on average over $h$.  
\item  Study the lower order terms in the main term asymptotic for $D_{k,\ell}(x,h)$.
More precisely, determine explicit expressions for the coefficients $\alpha_i(h)$ for $0 \le i \le 2k-3$
and numerically study the size of $|D_{k,\ell}(x,h)-P_{2k-2;h}(\log x)|$ with $h$ as a function of $x$.  
This might provide evidence towards the true sizes of the constants $\theta_k$ and $\beta_k$
in Conjecture \eqref{adddivconj}.  Furthermore, it seems possible to use the probabilistic method of 
section 5 to calculate the lower order terms.
\end{enumerate}

\noindent {\bf Acknowlegements} \\
We thank Kevin Henriot for discussions concerning his work \cite{He} and for informing us of the 
unpublished work of Daniel \cite{Da} and for providing us with a sketch of a proof of \eqref{Danbd2}.
We also thank Professor Andrew Granville for explaining  the probabilistic argument applied in section 4.
Thank-you to Professor Terry  Tao for communications regarding this article and his blogpost \cite{Tao}. 
The first author is supported by an NSERC Discovery Grant and the second author was supported with an NSERC USRA award for this research. 


\end{document}